\documentclass[12pt]{amsart}
\usepackage{amssymb}
\usepackage{amscd}
\usepackage{amsfonts}
\usepackage{graphicx}
\usepackage{latexsym}
\usepackage[all]{xy}
\usepackage{verbatim}
\usepackage{color}
\usepackage[margin=1in]{geometry}
\newtheorem{theorem}{Theorem}[section]
\newtheorem{lemma}[theorem]{Lemma}
\newtheorem{proposition}[theorem]{Proposition}
\newtheorem{corollary}[theorem]{Corollary}
\theoremstyle{definition}
\newtheorem{definition}[theorem]{Definition}
\newtheorem{example}[theorem]{Example}
\newtheorem{conjecture}[theorem]{Conjecture}

\newtheorem{remark}[theorem]{Remark}

\newcommand{\id}{\text{id}}

\renewcommand{\Vec}{\operatorname{\operatorname{\mathsf{Vec}}}}

\DeclareMathOperator{\Aut}{\operatorname{\mathsf{Aut}}}
\DeclareMathOperator{\Ext}{\operatorname{\mathsf{Ext}}}

\DeclareMathOperator{\Rep}{\operatorname{\mathsf{Rep}}}
\DeclareMathOperator{\Corep}{\operatorname{\mathsf{Corep}}}

\DeclareMathOperator{\ind}{\operatorname{\mathsf{ind}}}
\DeclareMathOperator{\Hom}{\operatorname{\mathsf{Hom}}}
\DeclareMathOperator{\Alt}{\operatorname{\mathsf{Alt}}}
\DeclareMathOperator{\Sym}{\operatorname{\mathsf{Sym}}}
\DeclareMathOperator{\Quad}{\operatorname{\mathsf{Quad}}}

\newcommand{\rev}{\text{rev}}

\newcommand{\ev}{\text{ev}}

\newcommand{\gr}{\text{gr}}

\newcommand{\eps}{\varepsilon}

\newcommand{\C}{\mathcal{C}}

\newcommand{\Z}{\mathcal{Z}}
\newcommand{\YD}{{}_{\Gamma} ^{\Gamma} \mathcal{YD}}

\newcommand{\A}{\mathcal{A}}

\renewcommand{\1}{_{(1)}}
\renewcommand{\2}{_{(2)}}
\newcommand{\3}{_{(3)}}

\newcommand{\ot}{\otimes}

\newcommand{\beq}{\begin{equation}}
\newcommand{\eeq}{\end{equation}}

\newcommand{\bpf}{\begin{proof}}
\newcommand{\epf}{\end{proof}}

\newcommand{\bth}{\begin{theorem}}
\renewcommand{\eth}{\end{theorem}}
\newcommand{\bpr}{\begin{proposition}}
\newcommand{\epr}{\end{proposition}}
\newcommand{\ble}{\begin{lemma}}
\newcommand{\ele}{\end{lemma}}
\newcommand{\bco}{\begin{corollary}}
\newcommand{\eco}{\end{corollary}}
\newcommand{\bde}{\begin{definition}}
\newcommand{\ede}{\end{definition}}
\newcommand{\bex}{\begin{example}}
\newcommand{\eex}{\end{example}}
\newcommand{\bre}{\begin{remark}}
\newcommand{\ere}{\end{remark}}
\newcommand{\bcj}{\begin{conjecture}}
\newcommand{\ecj}{\end{conjecture}}

\newcommand{\End}{\text{End}}

\newcommand{\Binv}{\text{B}_\text{inv}^2}
\newcommand{\Hinv}{\text{H}_\text{inv}^2}
\newcommand{\Zinv}{\text{Z}_\text{inv}^2}

\hyphenation{theo-re-ti-cal group-theo-re-ti-cal
semi-sim-ple al-geb-ras di-men-sions sim-ple ob-jects
equi-va-lent pro-per-ties ca-te-go-ries ques-tion mo-dule
e-print auto-equi-valence equi-va-ri-an-ti-za-tion}

\begin{document}

\title[Pointed braided tensor categories]{Pointed braided tensor categories}
\author{Costel-Gabriel Bontea}
\address{Department of Mathematics and Statistics,
University of New Hampshire,  Durham, NH 03824, USA}
\email{costel.bontea@gmail.com}
\author{Dmitri Nikshych}
\address{Department of Mathematics and Statistics,
University of New Hampshire,  Durham, NH 03824, USA}
\email{dmitri.nikshych@unh.edu}

\begin{abstract}
We classify finite pointed braided  tensor categories admitting a fiber functor 
in terms of  bilinear forms on symmetric Yetter-Drinfeld modules over abelian groups.  We describe
the groupoid formed by braided equivalences of such categories in terms of certain metric data, 
generalizing the well-known result of Joyal and Street \cite{JS93} for  fusion categories.
We study symmetric centers and  ribbon structures of pointed braided  tensor categories   
and examine their Drinfeld centers.
\end{abstract}

\maketitle

\baselineskip=18pt


\section{Introduction}

In this paper, we work over an algebraically closed field $k$ of characteristic $0$.
All tensor categories are assumed to be $k$-linear and finite. All Hopf algebras and modules over them 
are defined over $k$ and are assumed to be finite dimensional.  

A tensor category is called {\em pointed} if all its simple objects are invertible. 
An example of such a category is the category of  finite dimensional corepresentations $\Corep(H)$ 
of a  pointed Hopf algebra $H$.  Any pointed tensor category  admitting a fiber
functor is equivalent to some $\Corep(H)$. The classification of  pointed Hopf algebras
having  abelian group of group-like elements is nearing its completion, see \cite{A14}. 

This paper deals with classification of {\em braided} pointed  tensor categories.  Such a classification
is well known in the semisimple case, i.e., for fusion categories. It was proved by Joyal and Street \cite{JS93}
that the $1$-categorical truncation of the $2$-category of braided fusion categories is equivalent to the category of pre-metric groups, i.e.,
pairs $(\Gamma,\, q)$, where $\Gamma$ is a finite abelian group and $q:\Gamma\to k^\times$ is a quadratic form. 
In particular, braidings on pointed fusion categories are in bijection with abelian $3$-cocycles.  In the presence
of a fiber functor such cocycles are precisely bicharacters on abelian groups. Explicitly, a braided fusion category
having a fiber functor is equivalent to $\C(\Gamma,\, r_0) := \Corep(k[\Gamma],\, r_0)$, where 
the $r$-form $r_0$ is given by  a bicharacter on $\Gamma$.

In this work we extend the above results to non-semisimple braided tensor categories.  We classify
co-quasitriangular pointed Hopf algebras up to tensor equivalence of their corepresentation categories,
thereby obtaining classification of braided tensor categories having a fiber functor.

\begin{theorem}
\label{Thm 1}
Let $\C$ be a pointed braided tensor category having a fiber functor. 
Then $\C$ is completely determined by  a finite abelian group $\Gamma$,
 a bicharacter $r_0:\Gamma\times \Gamma \to k^\times$, an object   $V\in \Z_{sym}(\C(\Gamma,\, r_0))_{-}$,
and a morphism $r_1: V \ot V\to k$.  More precisely, 
\begin{equation}
\label{C as Corep}
\C \cong \C(\Gamma,\, r_0,\, V,\, r_1) := \Corep(\mathfrak{B}(V) \# k[\Gamma],\, r), 
\end{equation}
where $r|_{\Gamma\times\Gamma}= r_0$ and $r|_{V\ot V} =r_1$. 
\end{theorem}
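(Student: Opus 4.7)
The strategy is to realize $\C$ as the corepresentation category of a co-quasitriangular pointed Hopf algebra via Tannakian reconstruction, bring the Hopf algebra to bosonization form using the coradical filtration, and finally extract the invariants from the induced decomposition of the $r$-form.

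\emph{Step 1 (Tannakian reconstruction).} Applying Tannakian reconstruction to the fiber functor $F:\C\to\Vec$ produces a finite-dimensional Hopf algebra $H=\text{coend}(F)$ with $\C\simeq\Corep(H)$ as tensor categories. The braiding on $\C$ transports to a universal $r$-form $r:H\ot H\to k$, making $H$ co-quasitriangular. The pointedness of $\C$ translates to $H$ being a pointed Hopf algebra; since the invertible objects of a braided tensor category form an abelian group under $\ot$, the group $\Gamma:=G(H)$ of group-likes of $H$ is abelian. The restriction $r_0:=r|_{k[\Gamma]\ot k[\Gamma]}$ is then a bicharacter on $\Gamma$, giving the first invariant.

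\emph{Step 2 (Reduction to the graded case).} Let $\gr H$ be the associated graded Hopf algebra with respect to the coradical filtration. By Andruskiewitsch--Schneider theory (cf.~\cite{A14}), one has $\gr H\cong\mathfrak{B}(V)\#k[\Gamma]$ for a Yetter--Drinfeld module $V$ over $\Gamma$, with $\mathfrak{B}(V)$ its Nichols algebra. The co-quasitriangular structure descends to $\gr H$. The crucial point is that $\Corep(H)\simeq\Corep(\gr H)$ as braided tensor categories: $H$ differs from $\gr H$ by a Hopf $2$-cocycle deformation compatible with the $r$-form, and such deformations induce braided tensor equivalences of corepresentation categories. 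This lets one replace $H$ by $\gr H$ without loss of generality.

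\emph{Step 3 (Extracting the invariants).} Once $H\cong\mathfrak{B}(V)\#k[\Gamma]$, decompose $r$ along the natural bigrading. The group component recovers $r_0$; the restriction $r_1:=r|_{V\ot V}$ is a $\Gamma$-comodule map $V\ot V\to k$; and the mixed components are determined by $r_0$ together with the $\Gamma$-action and $\Gamma$-grading encoded in the Yetter--Drinfeld structure on $V$. Compatibility of $r$ with the multiplication and comultiplication of $\mathfrak{B}(V)$ then forces (i) $V$ to lie in the symmetric center $\Z_{sym}(\C(\Gamma,r_0))$ of the pointed braided fusion category $\C(\Gamma,r_0)=\Corep(k[\Gamma],r_0)$, and (ii) finite-dimensionality of $\mathfrak{B}(V)$ (a consequence of $\C$ being a finite tensor category) to single out the ``fermionic'' part, yielding $V\in\Z_{sym}(\C(\Gamma,r_0))_{-}$. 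The converse is obtained by running the construction backwards: any quadruple $(\Gamma,r_0,V,r_1)$ of the stated form yields a valid co-quasitriangular pointed Hopf algebra $\mathfrak{B}(V)\#k[\Gamma]$ whose corepresentation category is braided pointed with fiber functor.

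\emph{Main obstacle.} The central difficulty is Step 2, i.e.\ showing that liftings of $\gr H$ are invisible at the level of braided tensor categories with fiber functor. This should follow from a cohomological argument identifying liftings compatible with the universal $r$-form as coboundaries in the invariant cohomology $\Hinv$ introduced in the paper's preliminaries. Once this reduction is in hand, Steps 1 and 3 amount to organized bookkeeping, and the asserted equivalence $\C\cong\C(\Gamma,r_0,V,r_1)$ follows.
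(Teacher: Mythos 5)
Your overall strategy follows the paper's: reconstruct a co-quasitriangular pointed Hopf algebra $(H,r)$, reduce to the bosonization $\mathfrak{B}(V)\#k[\Gamma]$ via cocycle deformation, and read off $(\Gamma,\,r_0,\,V,\,r_1)$ from the restrictions of $r$; your Steps 1 and 3 match the paper's derivation (condition \eqref{cqt1} applied to the pairs $(x_i,g)$ and $(g,x_i)$ gives $V\in\Z_{sym}(\C(\Gamma,\,r_0))$, and finite-dimensionality rules out $\chi_i(g_i)=1$). However, your ``Main obstacle'' is misplaced and the mechanism you propose for it is wrong. Liftings of $\gr H$ are cocycle deformations by Masuoka's theorem, and \emph{any} $2$-cocycle deformation $(H,r)\mapsto(H^\sigma,r^\sigma)$ leaves the corepresentation category unchanged as a braided tensor category; there is nothing to prove about such cocycles being coboundaries in $\Hinv$. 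Indeed, invariant cocycles (the ones $\Hinv$ measures) are precisely those that do \emph{not} change the Hopf algebra, so the cocycles relating $H$ to $\gr H$ are generally not of that type, and no $\Hinv$-triviality is needed or available. Step 2 is therefore easy, not hard.

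The genuine gap is the converse direction, which you dismiss as ``running the construction backwards'': given $(\Gamma,\,r_0,\,V,\,r_1)$ with the stated properties, one must prove that an $r$-form on $\mathfrak{B}(V)\#k[\Gamma]$ restricting to $r_0$ and $r_1$ exists and is unique, and this is the technical core of the theorem. The paper does it by defining $\varphi:H\to H^{*\,\mathrm{cop}}$ on generators ($g\mapsto\gamma_g$, $x_i\mapsto\xi_i$), verifying that the defining relations of $H$ are respected in $H^{*\,\mathrm{cop}}$ (which uses that $r_1$ is a morphism in $\YD$), so that \eqref{cqt2} and \eqref{cqt3} hold, and then checking \eqref{cqt1}. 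That last check is not automatic: it requires Lemma~\ref{cqt1 for H1}, an induction along the coradical filtration showing that \eqref{cqt1} need only be verified on $H_1\times H_1$, together with Lemma~\ref{uniqueness} for uniqueness. Without this construction and verification, the asserted correspondence between quadruples and pointed braided tensor categories with fiber functor is not established.
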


Here the symmetric center $\Z_{sym}(\C(\Gamma,\, r_0))$ has a canonical (possibly trivial) grading by
$\mathbb{Z}/2\mathbb{Z}$: 
\[
\Z_{sym}(\C(\Gamma,\, r_0))  = \Z_{sym}(\C(\Gamma,\, r_0))_{+} \oplus \Z_{sym}(\C(\Gamma,\, r_0))_{-},
\]
where $\Z_{sym}(\C(\Gamma,\, r_0))_{+}$ denotes the maximal Tannakian subcategory.  

Theorem~\ref{Thm 1} is proved in Section~\ref{Class of CQT}, where details of the construction of $r$ can be found. 
Quasitriangular structures  on $\mathfrak{B}(V) \# k[\Gamma]$ were explicitly described by Nenciu in \cite{Ne04} 
in terms of  generators of $\Gamma$ and a basis of $V$.  The classification of co-quasitriangular structures
can be obtained  by duality.  Theorem~\ref{Thm 1} says that {\em every} pointed co-quasitriangular Hopf algebra
is equivalent to the above by a $2$-cocycle deformation.  Also, our description of $r$-forms is given in invariant terms 
and avoids the use of bases and generators. 

We describe the symmetric center of $\C(\Gamma,\, r_0,\, V,\, r_1)$ and show that 
a pointed braided tensor category is not factorizable
unless it is semisimple.  We also show that this category is always ribbon and classify its ribbon  structures. 

We obtain a parameterization of pointed braided tensor categories similar to the parameterization of braided fusion
categories by quadratic forms \cite{JS93}. Namely, we introduce the groupoid of {\em metric quadruples}
$(\Gamma,\, q,\,V,\, r)$, where $\Gamma$ is a finite abelian group, $q: \Gamma\to k^\times$ is a diagonalizable quadratic form,
$V$ is an object in $\Z_{sym}(  \C(\Gamma,\, q))_{-}$, and $r:V\ot V \to k$ is an alternating  morphism.

\begin{theorem}
\label{Thm 2}
The groupoid of isomorphism classes of equivalences of pointed braided tensor categories having a fiber functor is equivalent
to the groupoid of metric quadruples.
\end{theorem}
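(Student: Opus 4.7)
The plan is to upgrade Theorem~\ref{Thm 1} from a classification of objects to an equivalence of groupoids by constructing a functor $F$ from the groupoid of metric quadruples to the groupoid of pointed braided tensor categories with fiber functor, and then verifying that $F$ is essentially surjective and fully faithful.

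For the construction of $F$ on objects, given a metric quadruple $(\Gamma,q,V,r)$, I would choose a bicharacter $r_0:\Gamma\times\Gamma\to k^\times$ satisfying $r_0(\gamma,\gamma)=q(\gamma)$ (possible because $q$ is assumed diagonalizable), and a bilinear form $r_1:V\otimes V\to k$ whose alternating part equals $r$, and set $F(\Gamma,q,V,r)=\C(\Gamma,r_0,V,r_1)$. Two choices of $r_0$ lifting $q$ differ by an alternating bicharacter on $\Gamma$, and two choices of $r_1$ lifting $r$ differ by a symmetric $\Gamma$-invariant bilinear form on $V$. The key verification is that each such discrepancy is realized by a Hopf $2$-cocycle twist of $\mathfrak{B}(V)\#k[\Gamma]$ that induces a braided equivalence of the corresponding corepresentation categories; hence $F$ is well-defined on equivalence classes.

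Essential surjectivity is immediate from Theorem~\ref{Thm 1}: any such $\C$ has the form $\C(\Gamma,r_0,V,r_1)$, and sending this to the quadruple with $q(\gamma)=r_0(\gamma,\gamma)$ and $r(v,w)=\tfrac{1}{2}(r_1(v,w)-r_1(w,v))$ yields a metric quadruple whose image under $F$ recovers $\C$. For fully faithfulness, any braided equivalence between two such categories restricts on the subcategory of invertible objects to a group isomorphism $\Gamma\to\Gamma'$, which must preserve $q$ because the self-braiding of the simple object labeled by $\gamma$ computes $q(\gamma)$. Transporting via this isomorphism, the equivalence further restricts on the negative part of the symmetric center to an isomorphism $V\to V'$ forced to intertwine the alternating forms $r$ and $r'$, since only the alternating part is braiding-invariant. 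Conversely, every morphism of metric quadruples lifts to a braided equivalence, the ambiguity being precisely the $2$-cocycle twist of the previous paragraph.

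The main obstacle is the invariance statement underlying well-definedness: namely, that the symmetric part of $r_1$ is not an invariant of the braided category, so any two choices of $r_1$ with the same alternating part yield equivalent $\C(\Gamma,r_0,V,r_1)$. This reduces to constructing an explicit gauge transformation on $\mathfrak{B}(V)\#k[\Gamma]$ that fixes the Hopf algebra structure but shifts the $r$-form by a prescribed symmetric $\Gamma$-invariant bilinear form on $V$, extending the analogous classical statement for $r_0$ that underlies the Joyal--Street theorem \cite{JS93}. Once this is in place, the remaining steps are largely a cataloguing of equivalences parallel to the semisimple argument.
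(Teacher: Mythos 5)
Your construction of the functor and the essential surjectivity argument match the paper's: the paper's $F$ sends $(\Gamma,q,V,r)$ to $\C(\Gamma,r_0,V,r)$ for a chosen diagonalizing bicharacter $r_0$, and the ``main obstacle'' you single out --- that shifting $r_1$ by a symmetric $\Gamma$-invariant form does not change the braided equivalence class --- is exactly the paper's Lemma~\ref{lemma 2} combined with Corollary~\ref{trivial restriction}: invariant $2$-cocycles $\sigma$ with $\sigma|_{\Gamma\times\Gamma}=1$ realize every element of $\Sym^2_{\YD}(V^*)$ and shift the $r$-form by $2(\sigma|_{V\ot V})_{sym}$. That part of your plan is sound.

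The genuine gap is in full faithfulness. First, a braided equivalence does not literally ``restrict to an isomorphism $V\to V'$'': $V$ is not an object of $\C(\Gamma,r_0,V,r_1)$ but a Yetter--Drinfeld module over $\Gamma$; what an equivalence induces is a map on the spaces $\Ext^1_{\C}(g,\mathbf{1})$, and extracting the alternating form from the squared braiding on the associated two-dimensional extensions requires the computation of Remark~\ref{r from Ext}. Second, and more seriously, even after producing the pair $(\alpha,f)$ you must show the given equivalence is \emph{isomorphic} to $F(\alpha,f)$, i.e.\ rule out exotic equivalences not induced by Hopf algebra maps. The paper does this by invoking Davydov's theorem that every tensor equivalence $\Corep(H)\to\Corep(H')$ arises from a pair ($2$-cocycle $\sigma$ on $H$, Hopf isomorphism $H^\sigma\to H'$), and then uses the braided condition together with Lemma~\ref{lemma 2} to force $\sigma$ to be gauge-trivial; nothing in your sketch substitutes for this input. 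Finally, for faithfulness you must identify exactly when $F(\id_\Gamma,f)$ is trivial as a tensor functor: the answer $f=\pm\id_V$ comes from the fact that trivial autoequivalences correspond to conjugation by a character $\chi\in\widehat\Gamma$, which acts on $V$ by $\pm\id$ precisely because (by Deligne's classification of symmetric categories, Remark~\ref{-1 on support V}) there exists a character equal to $-1$ on the support of $V$. Your attribution of the $\pm$ ambiguity to ``the $2$-cocycle twist of the previous paragraph'' is not correct; it comes from co-inner automorphisms, not from cocycle deformations.
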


Theorem~\ref{Thm 2} is proved in Section~\ref{Metric 4sect}. The braiding symmetric form $r$ can be canonically 
recovered from the restriction of the squared braiding on  two-dimensional objects of a category, 
see Remark~\ref{r from Ext}. 

The Drinfeld center of $\C(\Gamma,\, r_0,\, V,\, r_1)$ is not pointed when  $V \neq 0$.  We show that when $V \cong V^*$ 
the trivial component of the universal grading of $\Z(\C(\Gamma,\, r_0,\, V,\, r_1))$ is pointed  
and corresponds to a certain braided vector space with 
a symplectic bilinear form (the Drinfeld double of $V$),
see Section~\ref{double of quantum linear space} and  Theorem~\ref{Zad} for details. 

The paper is organized as follows.  

Section~\ref{sect: Prelim} contains background material about braided tensor categories,
co-quasitriangular Hopf algebras and their twisting deformations by $2$-cocycles.

In Section~\ref{Sect:QLS} we discuss quantum linear spaces of symmetric type and their bosonizations
$\mathfrak{B}(V)\#k[\Gamma]$.
We use Mombelli's classification of Galois objects  for quantum linear spaces \cite{Mo11} to obtain a classification
of $2$-cocycles on such bosonozations in terms of $V$ (equivalently, we obtain a classification of fiber functors on corresponding 
corepresentation categories), see Proposition~\ref{all twists on H}.  We also compute the second invariant
cohomology group of $\mathfrak{B}(V)\#k[\Gamma]$ in Proposition~\ref{Hinv computed}.

In Section~\ref{Class of CQT} we prove Theorem~\ref{Thm 1}.

In Section~\ref{Sect:Zsym} we study the symmetric center of $\C(\Gamma,\, r_0,\, V,\, r_1)$. It turns
out that this center is always non-trivial if $V\neq 0$. 

In Section~\ref{Sect:ribbon} we show that  a pointed braided tensor  category always has a ribbon
structure and classify all such structures, improving the result of \cite{Ne04}. 

In Section~\ref{Metric 4sect} we prove Theorem~\ref{Thm 2}. The correspondence between pointed braided tensor categories
and metric quadruples is useful, in particular, for computing the groups of autoequivaleces, 
see Corollary~\ref{group of autoequivalences}.

Finally, Section~\ref{Sect: Drinfeld center} contains a description of the Drinfeld center of the  pointed braided
tensor category $\C(\Gamma,\, r_0,\, V,\, r_1)$ when $V$ is self-dual. This category is no longer pointed
if $V \neq 0$, but it has a faithful grading with a pointed trivial component. We describe the structure of this component
and show that it is the corepresentation category of  the bosonization of the Drinfeld double of $V$.

\textbf{Acknowledgments.}
We are grateful to Adriana Nenciu  for helpful discussions.
The work of the second author  was partially supported  by the NSA grant H98230-16-1-0008.


\section{Preliminaries}
\label{sect: Prelim}

\subsection{Finite tensor categories and Hopf algebras}

We assume familiarity with basic results of the theory of finite
tensor categories   \cite{EGNO15},
and the theory of Hopf algebras \cite{M93, R11}. 


For a Hopf algebra $H$ we denote by $\Delta$,
$\varepsilon$, $S$ the comultiplication, counit, and
antipode of $H$, respectively. We make use of Sweedler's summation
notation: $\Delta (x) = x_{(1)} \ot x_{(2)}$, $x \in H$. 
We denote by $*$ the convolution, i.e., the multiplication in the dual Hopf algebra. 
By $H^{\text{cop}}$ we denote the co-opposite Hopf algebra of $H$,
i.e., the algebra $H$ with co-multiplication $\Delta^{\text{op}}
(x) = x_{(2)} \ot x_{(1)}$, $x\in H$.
We denote $G(H)$ the group of group-like
elements of $H$ (i.e., elements $g\in H$ such that $\Delta(g)=g\ot g$).
Also,  we denote $H^+ = \mbox{Ker}(\varepsilon)$.


Let $\Rep (H)$ and $\Corep (H)$ be tensor categories of left
$H$-modules and right $H$-comodules, respectively, over a Hopf algebra $H$.  Note that
there is a canonical tensor equivalence between $\Corep(H)$ and
$\Rep (H^{*})$. In general, a tensor category $\mathcal{C}$ is
equivalent to the co-representation category of some Hopf algebra
$H$ if and only if there exists a fiber functor (i.e., an exact
faithful tensor functor) $F : \mathcal{C} \to \Vec$, where $\Vec$
is the tensor category of $k$-vector spaces.

A tensor category is \textit{pointed} if all of its simple objects
are invertible with respect to the tensor product. A Hopf algebra
$H$ is \textit{pointed} if $\Corep (H)$ is pointed. The
classification of finite dimensional pointed Hopf algebras is
still an open problem, though important progress has been made so
far (see \cite{A14} and the references therein). The best
understood class is that of pointed Hopf algebras with abelian
coradical \cite{AS10}. It was shown by  Angiono \cite{An13}
that such a Hopf algebra $H$ is generated by its group-like
elements and  skew-primitive elements.

\subsection{Braided tensor categories and co-quasitriangular Hopf algebras}

A \textit{braiding} on a finite tensor category $\mathcal{C}$ is a
natural isomorphism
\[
c_{X, Y} : X \ot Y \to Y \ot X,\qquad X,\, Y \in \mathcal{C},
\]
satisfying the hexagon axioms. A \textit{braided}
tensor category is a pair consisting of a tensor category and a
braiding on it.

A \textit{co-quasitriangular} Hopf algebra is a pair $(H, r)$,
where $H$ is a Hopf algebra and $r : H \ot H \to k$ is a
convolution invertible linear map, called an {\em $r$-form},
satisfying the following conditions:
\begin{eqnarray}
x_{(1)} y_{(1)} r (y_{(2)}, x_{(2)}) &=& r (y_{(1)}, x_{(1)})
y_{(2)}x_{(2)} \label{cqt1}, \\
r (x, yz) &=& r (x_{(1)}, z) r (x_{(2)}, y) \label{cqt2},\\
r (xy, z) &=& r (x, z_{(1)}) r (y, z_{(2)}) \label{cqt3},
\end{eqnarray}
for all $x,\,y,\,z\in H$.

\begin{remark}
\label{r-form =
morphism} 
Let $r : H \ot H \to k$ be a linear map and let $\varphi_{r}
: H \to H^{* \textnormal{cop}}$ be defined by  $\varphi_{r} (x) = r (x, -)$, for
all $x \in H$. Then $r$ satisfies \eqref{cqt2}, respectively
\eqref{cqt3}, if and only if $\varphi_{r}$ is a coalgebra map,
respectively an algebra map.
\end{remark}

There is a bijective  correspondence between the set of $r$-forms
on a Hopf algebra $H$ and the set of braidings on $\Corep (H)$.
The braiding corresponding to $r:H\ot H\to k$ is given by
\begin{equation}
\label{braiding from r-form}
c_{U, V} : U \ot V \to V \ot U, \quad u \ot v \mapsto
\sum r (u_{(1)}, v_{(1)}) v_{(0)} \ot u_{(0)},
\end{equation}
where $U,\,V$ are $H$-comodules and $u\in U,\, v\in V$.

We denote by $\Corep(H,\, r)$ the braided tensor category
$\Corep (H)$ with braiding given by~$r$.

%

\subsection{Ribbon categories and ribbon elements}
\label{prelim: ribbon}

A \textit{ribbon} tensor category is a braided tensor category
$\mathcal{C}$ together with a \textit{ribbon structure} on it,
i.e., an element $\theta \in \Aut(\id_{\mathcal{C}})$ such that
\begin{eqnarray}
\theta_{X \ot Y} &=& (\theta_{X} \ot \theta_{Y}) \circ c_{Y, X}
\circ c_{X, Y} \\
(\theta_{X})^{*} &=& \theta_{X^{*}}
\end{eqnarray}
for all $X$, $Y \in \mathcal{C}$.

If $(H, r)$ is a co-quasitriangular Hopf algebra then ribbon
structures on $\Corep (H, r)$ are in bijection with \textit{ribbon
elements} of $(H, r)$, i.e., convolution invertible central
elements $\alpha \in H^{*}$ such that $\alpha \circ S = \alpha$
and
$$
\alpha (xy) = \alpha(x_{(1)}) \alpha (y_{(1)}) (r_{21} * r)
(x_{(2)}, y_{(2)})
$$
for all $x$, $y \in H$. The ribbon structure associated to the
ribbon element $\alpha$ is
$$
\theta_{V} : V \to V, \quad v \mapsto \sum \alpha(v_{(1)}) v_{(0)}.
$$

The ribbon elements of $(H, r)$ can be determined in the following
way (see \cite[Proposition 2]{R94} where the result appears in
dual form). Let $ \eta : H \to k$, $\eta (h) = r (h_{(2)},
S(h_{(1)}))$, $h \in H$, be the Drinfeld element of $(H,
r)$. Then $\eta^{-1} (h) = r (S^{2} (h_{(2)}), h_{(1)})$, for all
$h \in H$, the element $(\eta \circ S)* \eta^{-1}$ is a group-like
element of $H^{*}$, and the map
$$
\gamma \mapsto \gamma*\eta
$$
establishes a one-to-one correspondence between the set of
group-like elements $\gamma \in H^{*}$ satisfying $\gamma^{2} =
(\eta \circ S) * \eta^{-1}$ and $S^{2}_{H^{*}} (p) = \gamma^{-1} *
p * \gamma$, for all $p \in H^{*}$, and the set of ribbon elements
of $(H, r)$.

\subsection{Pointed braided fusion categories}
\label{prelim: pointed fusion}

Let $\mathcal{C}$ be a pointed braided fusion category. Then the
isomorphism classes of simple objects of $\mathcal{C}$ form a
finite abelian group $\Gamma$. The braiding determines a function
$c : \Gamma \times \Gamma \to k^{\times}$ and the function $q:
\Gamma \to k^{\times}$, $q (g) = c (g, g)$, $g \in \Gamma$, is a
\textit{quadratic form} on $\Gamma$, i.e., $q (g^{-1}) = q (g)$,
for all $g \in \Gamma$. The symmetric function
$$
b (g, h) = \frac{q (gh)}{q(g) q(h)}, \qquad g, h \in \Gamma
$$
is a bicharacter on $\Gamma$. It was shown in \cite{JS93} (see
also \cite[Appendix D]{DGNO10}) that the assignment
$$
\mathcal{C} \mapsto (\Gamma,\, q)
$$
determines an equivalence between the 1-categorical truncation 
of the $2$-category of pointed braided fusion categories
and the category of pre-metric groups. The objects of
the latter category are finite abelian groups equipped with a
quadratic form, and morphisms are group homomorphisms preserving
the quadratic forms. We will denote a pointed braided fusion
category associated to $(\Gamma,\, q)$ by $\mathcal{C} (\Gamma,\, q)$.

Let $\Quad(\Gamma)$ denote the set  of quadratic forms on $\Gamma$
and let $\Quad_d(\Gamma) \subset \Quad(\Gamma)$ be the subgroup
of diagonalizable quadratic forms on $\Gamma$, i.e., such that there is a
bilinear form $r_0: \Gamma\times \Gamma \to k^\times$ with $q(g) =
r_0(g,\, g)$ for all $g\in \Gamma$ (i.e., $q$ is the restriction of
$r_0$ on the diagonal). The corepresentation categories of
co-quasitriangular pointed
semisimple Hopf algebras are precisely those equivalent to fusion categories of the
form $\C(\Gamma,\, q)$ with $q\in \Quad_d(\Gamma)$. We will use the following notation:
\[
\C(\Gamma,\, r_0) := \Corep(k[\Gamma],\, r_0).
\]

\subsection{The symmetric center}
\label{prelim:symcenter}

Let $\mathcal{C}$ be a braided tensor category with braiding
$\{c_{X, Y}\}_{X, Y \in \mathcal{C}}$. 
The {\em symmetric center} $\Z_{sym}(\C)$ of $\C$ is the full tensor subcategory of $\C$ 
consisting of all objects $Y$ such that $c_{Y,X} \circ c_{X, Y} = \id_{X\ot Y}$ for all $X \in \C$. 

A braided tensor category $\C$ is called {\em symmetric} if $\Z_{sym}(\C) = \C$. Any symmetric
fusion category $\C$ has a canonical (possibly trivial) $\mathbb{Z}/2\mathbb{Z}$-grading
\begin{equation}
\label{ Z2 grading on symmetric}
\C =\C_+ \oplus \C_-,
\end{equation}
where $\C_+$ is the maximal Tannakian subcategory of $\C$ \cite{De02}. 
In terms of the canonical ribbon element $\theta$ of $\C$, one has $\theta_X =\pm  \id_X$
when $X\in \C_\pm$. 

A braided tensor category $\C$  is called {\em factorizable} if $\Z_{sym}(\C)$ is trivial. 


Let  $\C =\Corep (H)$, where $H$ is a co-quasitriangular  Hopf
algebra with an $r$-form $r$ (so that the braiding of $\C$ is
given by \eqref{braiding from r-form}). Then
$\Z_{sym}(\C)=\Corep(H_{sym})$  for a Hopf subalgebra $H_{sym}
\subset H$. The category $\C$ is symmetric if  and only if
$H_{sym}=H$ and it is factorizable if  and only if $H_{sym} =k1$.
This Hopf subalgebra $H_{sym} $ was described by Natale in
\cite{Na06} (note that in \cite{Na06} quasitriangular Hopf algebras
were considered while we deal with the dual situation).  Below we
reproduce this description using our terminology.

Consider the linear map $\Phi_r: H \to H^*$ given by
\[
\Phi_r(x)(y)= r(y\1,\, x\1)\, r
(x\2,\,y\2),\qquad x,y\in H.
\]
Its image $\Phi_r(H)$ is a normal coideal subalgebra of $H^*$.
Hence, $H^*\Phi_r(H)^+$ is a Hopf ideal of $H^*$. We have
$H_{sym}= (H^*\Phi_r(H)^+)^\perp$. Here for $I\subset H^*$ we
denote $I^\perp\subset H$ its annihilator, i.e, $I^\perp= \{ x\in
H \mid f (x)=0 \textnormal{  for all } f \in I\}$. Explicitly,
\begin{equation} \label{Hsym formula}
H_{sym} = \{x \in H \mid x_{(1)} r (x_{(2)}, y_{(1)}) r (y_{(2)},
x_{(3)}) = \varepsilon(y)x, \textnormal{ for all } y \in H \},
\end{equation}
Equivalently, $H_{sym}$ consists of all $x\in H$ such that the
squared braiding $c_{H,H}^2:  H\ot H \to H\ot H$ fixes $x \ot y$
for all $y\in H$.

\subsection{The Drinfeld center of a tensor category and Yetter-Drinfeld modules}

An important example of a braided tensor category is the {\em Drinfeld center}
$\mathcal{Z} (\mathcal{C})$ of a finite tensor category
$\mathcal{C}$. The objects of $\mathcal{Z} (\mathcal{C})$ are
pairs $(Z, \gamma)$ consisting of an  object $Z$ of $\mathcal{C}$
and a natural isomorphism $\gamma : - \ot Z \to Z \ot -$  satisfying 
a hexagon axiom.

The braiding of $\mathcal{Z}(\mathcal{C})$ is
$$
c_{(Z, \gamma), (Z', \gamma')} = \gamma'_{Z} : (Z, \gamma) \ot
(Z', \gamma') \to (Z', \gamma') \ot (Z, \gamma).
$$

\begin{example}
If $H$ is a finite dimensional Hopf algebra then the Drinfeld center of
$\Rep (H)$ is braided equivalent to $\Rep \big( D(H) \big)$, where
$D (H)$ is the \textit{Drinfeld double} of $H$. As a coalgebra, $D
(H) = H^{*\text{cop}} \ot H$, where $H^{*\text{cop}}$ is the
co-opposite dual of $H$, while the algebra structure is given by
$$
(p \ot h) (p' \ot h') = p \left( h_{(1)} \rightharpoonup p'
\leftharpoonup S^{-1} \big( h_{(3)} \big) \right) \ot h_{(2)} h',
\quad p, p' \in H^{*}, \,\, h, h' \in H,
$$
where $(h \rightharpoonup p \leftharpoonup g) (x) = p (g x h)$,
for all $h$, $g$, $x \in H$, $p \in H^{*}$. 
\end{example}

It is well-known that $\Rep \big( D(H) \big)$ is braided
equivalent to the category $^H _{H} \mathcal{YD}$ of
\textit{(left) Yetter-Drinfeld modules} over $H$. An object $V$ in
this category has simultaneously a structure of a left $H$-module,
$h \ot v \mapsto h \cdot v$, and a structure of a left
$H$-comodule $\delta : V \to H \ot V$, $\delta (v) = v_{(-1)} \ot
v_{(0)}$, such that the following condition is satisfied:
$$
\delta (h \cdot v) = h_{(1)} v_{(-1)} S (h_{(3)}) \ot h_{(2)}
\cdot v_{(0)}, \qquad h \in H, \, v \in V.
$$
Morphisms between such objects are linear maps preserving both the
action and the co-action of $H$. The braiding of $^H _{H}
\mathcal{YD}$ is given by:
$$
c_{U, V} : U \ot V \to V \ot U, \quad u \ot v \mapsto u_{(-1)}
\cdot v \ot u_{(0)}
$$
for all $u \in U$, $v \in V$ and $U$, $V \in \, ^H _{H}
\mathcal{YD}$.

For a finite group $\Gamma$ we denote $^{k[\Gamma]} _{k[\Gamma]}
\mathcal{YD}$ by $^{\Gamma} _{\Gamma} \mathcal{YD}$. If $\Gamma$
is abelian then
$$
^{\Gamma} _{\Gamma} \mathcal{YD} \simeq \mathcal{Z}
(\Vec_{\Gamma}) \simeq \C(\Gamma\times \widehat{\Gamma},\, h),
$$
where $h : \Gamma \times \widehat{\Gamma} \to k^{\times}$ is the
canonical quadratic form, $h (g, \chi) = \chi (g),\,g \in
\Gamma,\, \chi \in \widehat{\Gamma}$.


\begin{remark}
\label{braided TC in center}
For a braided tensor category $\C$ there exist canonical braided tensor embeddings
\begin{equation}
\label{hook}
\C \hookrightarrow \Z(\C) : X \mapsto (X,\, c_{-,X})\quad \mbox{and} \quad \C^\rev \hookrightarrow \Z(\C) : X \mapsto (X,\, c_{X,-}^{-1}) .
\end{equation}
The intersection of images of $\C$ and $\C^\rev$ in $\Z(\C)$ is equivalent to $\Z_{sym}(\C)$, the symmetric center of $\C$. 
\end{remark}

\subsection{2-cocycles and deformations}
\label{prelim 2-cocycles}

A \textit{ $2$-cocycle} on $H$ is a convolution invertible
linear map $\sigma: H \ot H \to k$ such that $\sigma (x, 1) =
\varepsilon (x) = \sigma(1, x)$ and
\begin{equation}
\sigma(x\1, y\1) \sigma(x\2 y\2, z) = \sigma(y\1, z\1) \sigma(x, y\2 z\2)
\end{equation}
for all $x$, $y$, $z \in H$. Two $2$-cocycles $\sigma$ and
$\sigma'$ are \textit{gauge equivalent} if there exists a
convolution invertible map $u: H \to k$ such that
$$
\sigma' (x, y) = u^{-1} (x\1) u^{-1} (y\1) \sigma(x\2, y\2) u (x\3
y\3),\qquad x,y\in H.
$$
for all $x$, $y \in H$.

The isomorphism classes of fiber functors on $\Corep (H)$ are in bijection with the set of
gauge equivalence classes of 2-cocycles on $H$.

Twisting the multiplication of $H$ on both sides by a $2$-cocycle
$\sigma$, we obtain a new Hopf algebra, denoted $H^{\sigma}$ and
called a \textit{cocycle deformation} of $H$. We have $H^{\sigma}
= H$ as a coalgebra and the multiplication of $H^{\sigma}$ is
given by
\begin{equation}
\label{sigma on both sides} x \cdot_\sigma y = \sigma (x\1, y\1)
x\2 y\2 \sigma^{-1} (x\3, y\3), \qquad x, y \in H.
\end{equation}
If $H$ is a co-quasitriangular Hopf algebra with an $r$-form $r$
then $H^{\sigma}$ is also co-quasitriangular  with $r$-form
$r^{\sigma}$, given by
\begin{equation}
\label{rsigma}
r^{\sigma} (x, y) = \sigma (y\1, x\1) r (x\2, y\2)
\sigma^{-1} (x\3, y\3)  \qquad x, y \in H.
\end{equation}
For gauge equivalent $2$-cocycles $\sigma$ and
$\sigma'$  co-quasitriangular Hopf algebras
 $H^\sigma$ and  $H^{\sigma'}$ are isomorphic.

A $2$-cocycle $\sigma$ on $H$ is called \textit{invariant} if
\begin{equation} \label{invariant 2cocycle}
\sigma(x\1, y\1) x\2 y\2 = x\1 y\1 \sigma(x\2, y\2)
\end{equation}
for all $x,\,y\in H$. Note that  $\sigma$ is invariant if and only if $H^\sigma =
H$ as Hopf algebras. The set of invariant $2$-cocycles is a group
under convolution product denoted by $\Zinv (H)$.

For a convolution invertible linear map $u : H \to k$ such that $u
(1) = 1$ and $u (x_{(1)}) x_{(2)} = x_{(1)} u (x_{(2)})$, for all $x \in
H$, the map
$$
\sigma_{u}: H \ot H \to k, \quad  \sigma_{u} (x, y) = u (x_{(1)}) u
(y_{(1)}) u^{-1} (x_{(2)}y_{(2)}), \quad x, y \in H
$$
is an invariant $2$-cocycle. The set of all such $2$-cocycles is a
subgroup of $\Zinv (H)$ denoted by $\Binv(H)$.

The \textit{second invariant cohomology group} of $H$ \cite{BC06}
is the quotient group
$$
\Hinv (H) = \Zinv (H) / \Binv (H).
$$
For example, if $\Gamma$ is a group, then $\Hinv (k[\Gamma]) =
\textnormal{H}^{2} (\Gamma,\, k^{\times})$, the second cohomology group
of $\Gamma$ with coefficients in $k^\times$.

\subsection{Galois objects and 2-cocycles}
\label{Galois objects}

We recall here the connection between Galois objects and
$2$-cocycles. Let $H$ be a Hopf algebra.

A \textit{left} $H$-\textit{Galois object} is a non-zero left
$H$-comodule algebra $A$ such that $A^{\textnormal{co} H} = k$ and
the linear map $A \ot A \to H \ot A$, $a \ot b \mapsto a_{(-1)}
\ot a_{(0)}b$, for all $a$, $b \in A$, is bijective.

If $\sigma$ is a $2$-cocycle on $H$ then $H$, with the comodule
structure given by $\Delta$ and multiplication
\begin{equation} \label{sigma on left} x\cdot y = x_{(1)}y_{(1)}
 \sigma^{-1} (x_{(2)},\, y_{(2)}), \qquad x, y\in H,
\end{equation}
is a left $H$-Galois object, denoted by $H_{\sigma^{-1}}$.

Conversely, if $A$ is a left $H$-Galois object, then there exists
a left $H$-colinear isomorphism $\psi: H \to A$ such that $\psi
(1) = 1$. The map $\kappa : H \ot H \to k$, defined by
\begin{equation} \label{cocycle formula}
\kappa (x, y) = \varepsilon \Big( \psi^{-1} \big( \psi(x) \psi(y)
\big) \Big), \qquad x, y \in H
\end{equation}
is convolution invertible, $\sigma := \kappa^{-1}$ is a $2$-cocycle
and $\psi: \, H_{\sigma^{-1}} \to A$ is a left $H$-comodule
algebra isomorphism.





\section{Quantum linear spaces of symmetric type}
\label{Sect:QLS}

\subsection{Quantum linear spaces}

An important class of pointed Hopf algebras with a given abelian
group $\Gamma$ of group-like elements can be constructed as
follows \cite{AS98}.

Let $g_{1}, \dots, g_{n}$ be elements of $\Gamma$ and let
$\chi_{1}, \dots, \chi_{n}$ be elements of the dual group
$\widehat{\Gamma}$ such that
$$
\chi_{i} (g_{i}) \neq 1 \quad \textnormal{ and } \quad \chi_{j}
(g_{i}) \chi_{i} (g_{j}) = 1
$$
for all $i,j=1,\dots,n$, $i\neq j$. 

\begin{definition}
\label{QLS def}
A \textit{quantum linear
space} associated to the above datum $(g_{1}, \dots, g_{n}, \chi_{1},
\dots, \chi_{n})$ is the Yetter-Drinfeld module
\begin{equation}
\label{QLS}
V = \bigoplus_{i = 1}^{n} k x_{i} \in \, \YD,
\end{equation}
with $h \cdot x_{i} = \chi_{i} (h) x_{i}$, for all $h \in \Gamma$,
and $\rho (x_{i}) = g_{i} \ot x_{i}$, for all $i$.
\end{definition}

Let $V_g^\chi$ denote the simple object in $\YD$ corresponding to $g\in G$ and  $\chi\in \widehat\Gamma$.
Then $x_i \in V_{g_i}^{\chi_i},\, i=1,\dots,n$. 

The braiding on $V\ot V$ on the basic elements $x_i\ot x_j,\, i,j=1\dots, n, $ is given by
\begin{equation}
\label{cVV}
c_{V\ot V} (x_i \ot x_j) = \chi_j(g_i)\, x_j \ot x_i.
\end{equation}

\begin{definition}
We will say that a quantum linear space $V$ is of {\em symmetric
type} if $c_{V,V}^2 =\id_{V\ot V}$ (i.e., $\chi_i(g_i)=-1$ for all $i=1,\dots,n$).
\end{definition}

\begin{remark}
Equivalently, a quantum linear space of symmetric type is an object $V\in \YD$
such that $c_{V,V}^2 =\id_{V\ot V}$ and  $\theta_V=-\id_V$, where $\theta$ is the canonical
ribbon element of $\YD$. Note that this definition does not depend on the choice
of ``basis" $g_i,\chi_i,\, i=1,\dots, n$.
\end{remark}

Note that the transposition map
\begin{equation}
\label{transposition}
\tau_{V,V} : V\ot V \to V\ot V : v_1\ot v_2 \mapsto v_2\ot v_1,\qquad v_1,\, v_2\in V,
\end{equation}
is a morphism in $\YD$. 

\begin{lemma}
\label{symmetry}
Let $V\in \YD$ be a quantum linear space of symmetric type and $r:V\ot V \to k$
be a morphism in $\YD$.  Then $r\circ c_{V,V} =  - r \circ \tau_{V,V}$.
\end{lemma}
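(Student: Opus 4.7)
The plan is to verify the equality of the two morphisms $r\circ c_{V,V}$ and $-r\circ \tau_{V,V}$ from $V\ot V$ to $k$ by testing on the basis $\{x_i\ot x_j\}_{i,j=1,\dots,n}$, exploiting the Yetter--Drinfeld compatibility of $r$ together with the symmetric-type hypothesis.

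First, I would use that $\YD$ is semisimple with simple objects $V_g^\chi$ indexed by $(g,\chi)\in\Gamma\times\widehat\Gamma$, so that the tensor decomposition of $V\ot V$ puts the basis vector $x_i\ot x_j$ in the isotypic component $V_{g_ig_j}^{\chi_i\chi_j}$. Since $r$ lands in the trivial Yetter--Drinfeld object $k$, being a morphism in $\YD$ forces $r(x_i\ot x_j)=0$ whenever $(g_ig_j,\chi_i\chi_j)\neq(1,\varepsilon)$. This is the key selection rule.

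Next, I would compute both sides on a generic basis vector. By \eqref{cVV},
\[
r\circ c_{V,V}(x_i\ot x_j)=\chi_j(g_i)\,r(x_j\ot x_i),\qquad r\circ\tau_{V,V}(x_i\ot x_j)=r(x_j\ot x_i).
\]
If $r(x_j\ot x_i)=0$, both expressions vanish and there is nothing to check. Otherwise, the selection rule applied to the pair $(j,i)$ yields $g_jg_i=1$ and $\chi_j\chi_i=\varepsilon$, so $\chi_j=\chi_i^{-1}$, hence
\[
\chi_j(g_i)=\chi_i(g_i)^{-1}=(-1)^{-1}=-1,
\]
where the final equality is precisely the symmetric-type condition $\chi_i(g_i)=-1$. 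Substituting gives $r\circ c_{V,V}(x_i\ot x_j)=-r(x_j\ot x_i)=-r\circ\tau_{V,V}(x_i\ot x_j)$, as required.

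The argument is essentially a one-line computation once the YD selection rule is invoked; the only thing to check is that the braiding scalar $\chi_j(g_i)$ always equals $-1$ on those basis pairs where $r$ can be nonzero, and this is forced by the symmetric-type assumption. No essential obstacle is expected.
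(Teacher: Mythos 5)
Your proof is correct and follows essentially the same route as the paper: the paper phrases the selection rule as ``$r$ can only be nonzero on components $U\ot U^*$ with $U=V_{g_i}^{\chi_i}$ simple,'' and then computes the braiding scalar $\chi_j(g_i)=\chi_i^{-1}(g_i)=-1$ exactly as you do on basis vectors. The only cosmetic difference is that you work with the basis $\{x_i\ot x_j\}$ and state the vanishing condition explicitly, while the paper works with simple subobjects and leaves it implicit.
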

\begin{proof}
It suffices to check that $ c_{U,U^*} =  -  \tau_{U,U^*}$ for every simple object  $U=V_{g_i}^{\chi_i} \subset V$
such that $U^*$ is also a subobject of $V$.   In this case  $(g_i,\, \chi_i) = (g_j^{-1},\,x_j^{-1})$
for some $j$.  Therefore,
\[
c_{U,\, U^*} (y \ot y') = \chi_j(g_i) y'\ot y = \chi_i^{-1}(g_i)  y'\ot y = - y'\ot y
\]
for all $y \in U,\, y' \in U^*$, as required .
\end{proof}

Given a quantum linear space $V$ we associate to it the
bosonization  $\mathfrak{B} (V) \# k [\Gamma]$ of the Nichols
algebra $\mathfrak{B} (V)$ by $k[\Gamma]$\footnote{We will abuse the terminology and will
also refer to  $\mathfrak{B} (V) \# k [\Gamma]$ as a quantum linear space.}. 
This Hopf algebra is generated by the group-like elements $h \in \Gamma$ and the
$(g_{i}, 1)$-skew primitive elements $x_{i}$ (i.e., such that $\Delta (x_{i})
= g_{i} \ot x_{i} + x_{i} \ot 1$), $i = 1, \dots, n$, satisfying
the following relations:
$$
hx_{i} = \chi_{i} (h) x_{i}h, \qquad  \quad x_{i}^{h_{i}} = 0,
\qquad h \in \Gamma, \,\, i = 1, \dots, n,
$$
$$
x_{i}x_{j} = \chi_{j} (g_{i}) x_{j} x_{i}, \qquad i,\,j=1,\dots, n, 
$$
where $h_{i}$ is the order of the root of unity $\chi_{i}
(g_{i})$. The set 
\[
\{ g x_{1}^{i_{1}} \cdots x_{n}^{i_{n}} \mid g \in \Gamma, \, 0 \leq
i_{j} < h_{j}, \,\, j=1, \dots, n \}
\]
is a basis of $\mathfrak{B} (V) \# k [\Gamma]$.  

If $V$ is a quantum linear space then the liftings of
$\mathfrak{B} (V) \# k [\Gamma]$, i.e., the pointed Hopf algebras
$H$ for which there exists a Hopf algebra isomorphism
\[
\gr \, H \simeq \mathfrak{B} (V) \# k [\Gamma],
\]
where $\gr \, H$ is the graded Hopf algebra associated to the
coradical filtration of $H$, were classified  in \cite[Theorem
5.5]{AS98}.  Namely, for any such a lifting $H$, there exist
scalars $\mu_i \in \{0,\,1\}$ and $\lambda_{ij} \in k\, (1\leq i <
j \leq n)$, such that
\begin{enumerate}
\item[] $\mu_i$ is arbitrary  if $g_i^{h_i} \neq 1$ or $\chi^{r_i}
=1$, and $\mu_i=0$ otherwise, \item[] $\lambda_{ij}$ is arbitrary
if $g_ig_j\neq 1$ and $\chi_i\chi_j =1$, and $\lambda_{ij}=0$
otherwise.
\end{enumerate}
The Hopf algebra $H$ is generated by the group-like elements $g
\in \Gamma$ and the $(g_{i}, 1)$-skew-primitive elements $a_i,\,
i=1,\dots,n$, such that
\begin{eqnarray*}
ga_i &=& \chi_i(g) a_i g,\qquad a_i^{h_i} =\mu_i(1-g_i^{h_i}),\qquad i=1,\dots,n,\\
a_i a_j &=& \chi_j(g_i) a_j a_i +\lambda_{ij}(1-g_ig_j),\qquad
1\leq i < j \leq n.
\end{eqnarray*}

It was shown in \cite{Ma01} that these liftings are
cocycle deformations of $\mathfrak{B} (V) \# k [\Gamma]$ (see Section~\ref{prelim 2-cocycles}).


\begin{remark} \label{xP remark}
Suppose that $V$ is a quantum linear space of symmetric type. Then
$x_{i}^{2} = 0$ for all $i = 1, \dots, n$. For a subset $P =
\{i_{1}, i_{2}, \dots, i_{s}\} \subseteq \{1, \dots, n\}$ such
that $i_{1} < i_{2} < \cdots <i_{s}$ we denote the element
$x_{i_{1}} \cdots x_{i_{s}}$ by $x_{P}$ and use the convention
that $x_{\emptyset} = 1$. Clearly, the set  $\{ gx_{P} \mid g \in \Gamma, \,\, P \subseteq
\{1, \dots, n\} \}$ is a basis of $\mathfrak{B} (V) \# k[\Gamma]$.

Let $F \subseteq P$ be subsets of $\{1, \dots, n\}$ and let $\psi
(P, F)$ be the element of $k$ such that $x_{P} = \psi (P, F) x_{F}
x_{P \setminus F}$. Thus,
\begin{equation}
\psi (P, F) = \prod_{\substack{j \in F, \, i \in P \setminus F \\
i < j}} \chi_{j} (g_{i}) \label{psi}.
\end{equation}
It is easy to check that the comultiplication formula for $x_{P}$
is given by
\begin{equation}
\label{delta xP} \Delta (x_{P}) = \sum_{F \subseteq P} \psi (P, F)
g_{F} x_{P \setminus F} \ot x_F,
\end{equation}
where $ g_F = \Pi_{i\in F}\, g_i$ and $g_{\emptyset} = 1$.
\end{remark}



\subsection{The double of a quantum linear space} 
\label{double of quantum linear space}

We introduce here a construction which will appear in Section~\ref{Sect: Drinfeld center}
when we discuss the adjoint subcategory of the center of a pointed
braided finite tensor category.

Let $V \in \YD$ be the  quantum linear space of symmetric type
associated to a datum $(g_{1}, \dots, g_{n}, \chi_{1}, \dots,
\chi_{n})$. Let $\Sigma$ be the subgroup of $\Gamma \times
\widehat{\Gamma}$ generated by $(g_{i},\, \chi_{i}^{-1})$, $i = 1,
\dots, n$, and define characters $\varphi_{i} : \Sigma \to k^{\times}$ by
\[
\varphi_{i} (g,\, \chi) = \chi_{i} (g),\quad \mbox{for all } (g,\, \chi) \in \Sigma, \quad i = 1, \dots, n.
\]
We have
$$
\varphi_{i} (g_{i}, \chi_{i}^{-1}) = -1 \quad \textnormal{ and }
\quad \varphi_{j} (g_{i}, \chi_{i}^{-1}) \varphi_{i} (g_{j},
\chi_{j}^{-1}) = 1
$$
for all $i, j = 1, \dots, n$. Thus we can consider  the quantum
linear space of symmetric type $W \in \, ^{\Sigma}_{\Sigma} \mathcal{YD}$ associated
to the datum $\big( (g_{1}, \chi_{1}^{-1}), \dots, (g_{n},
\chi_{n}^{-1}), \varphi_{1}, \dots, \varphi_{n} \big)$.

\begin{definition}
\label{double of QLS}
We call the quantum linear space $D(V):= W \oplus W^{*} \in \,
^{\Sigma}_{\Sigma} \mathcal{YD}$ the \textit{Drinfeld double} of $V$.
\end{definition}

Note that the  quantum linear space $D(V)$ is of symmetric type.

There is a canonical bilinear form  $r_{D(V)} : D(V) \ot D(V) \to k$
given by
\begin{equation}
\label{rDV}
r_{D(V)}((w,\, f) \ot (w',\, f')) :=   \ev_{W} (f \ot w') +
\ev_{W} c_{W, W^{*}} (w, f')
\end{equation}
for all $w, w' \in W, \, f, f' \in W^{*}$,
where  $\ev_W: W^*\ot W\to k$ is the evaluation morphism
and $c_{W,W^*}: W \ot W^*\to W^*\ot W$ is braiding
in $^{\Sigma}_{\Sigma} \mathcal{YD}$.

Note that $r_{D(V)}$ is  a symplectic bilinear form on $W\oplus W^*$.
Indeed, if $\{x_{i}\}$ is a basis of $W$ such that $x_{i} \in
W_{(g_{i}, \chi_{i}^{-1})}^{\varphi_{i}}$, $i = 1, \dots, n$, and
$\{x_{i}^{*}\}$ is the dual basis of $W^*$, then the matrix of
$r_{D(V)}$ with respect to the basis $(x_{1}, \dots, x_{n},
x_{1}^{*}, \dots, x_{n}^{*})$ is
$$
\left(\begin{array}{cc}
0 & -I_{n}\\
I_{n} & 0
\end{array} \right),
$$
where $I_{n}$ denotes the $n$-by-$n$ identity matrix.

\subsection{Galois objects for quantum linear spaces}
\label{Mombelli's classification}

Let $V\in \YD$ be a quantum linear space of symmetric type and let
$H =\mathfrak{B} (V) \# k [\Gamma]$.  In \cite{Mo11} Mombelli
classified equivalence classes of  exact indecomposable
$\Rep(H)$-module categories.  In particular, he classified
$H$-Galois objects and, hence, 
$2$-cocycles on $H$. Here we recall this classification, see
\cite[Section 4]{Mo11}  for details.

A typical $H$-Galois object is determined by a
$2$-cocycle $\psi \in Z^{2} (\Gamma,\, k^{\times})$ and two
families of scalars $\xi =(\xi_{i})_{i = 1, \dots, n}$ and $\alpha
= (\alpha_{ij})_{1 \leq i <j \leq n}$, satisfying:
\begin{align}
\label{Mombelli's xi}
\xi_{i} & = 0  \quad  \textnormal{ if }  \chi_{i}^{2} (g) \neq \frac{ \psi (g ,\, g_{i}^{2}) }{ \psi (g_{i}^{2}, g) },\\
\label{Mombelli's alpha}
\alpha_{ij} & = 0 \quad  \textnormal{ if }  \chi_{i} \chi_{j} (g) \neq \frac{ \psi (g,\, g_{i}g_{j}) }{\psi (g_{i}g_{j},\, g)},
\end{align}
for all $g \in \Gamma$. To this datum one assigns a left
$H$-comodule algebra $\mathcal{A} (\psi, \xi, \alpha)$ generated
as an algebra by $\{e_{g}\}_{g \in \Gamma}$ and $v_{1}, \dots,
v_{n}$ subject to the relations:
\begin{align*}
e_{f} e_{g} & = \psi (f,\, g) e_{fg}, \qquad f, g \in \Gamma\\
e_{f} v_{i} & = \chi_{i} (f) v_{i} e_{f}, \qquad f \in \Gamma, \,\, i = 1, \dots, n \\
v_{i}v_{j} - \chi_{j} (g_{i}) v_{j} v_{i} & =   \alpha_{ij} e_{g_{i}g_{j}}, \qquad 1 \leq i < j \leq n  \\
v_{i}^{2} & =   \xi_{i} e_{g_{i}^{2}}, \qquad i = 1, \dots, n
\end{align*}
The left $H$-comodule structure of $\mathcal{A} (\psi, \xi,
\alpha)$ is $\lambda : \mathcal{A} (\psi, \xi, \alpha) \to H \ot
\mathcal{A} (\psi, \xi, \alpha)$,
\[
\lambda (v_{i}) = g_{i} \ot v_{i} + x_{i} \ot 1 \qquad
\textnormal{and} \qquad \lambda (e_{f}) = f \ot e_{f}
\]
for all $i = 1, \dots, n$ and $f\in \Gamma$. Two $H$-Galois
objects $\mathcal{A} (\psi, \xi, \alpha)$ and $\mathcal{A}
(\psi', \xi', \alpha')$ are isomorphic if and only $\psi,\, \psi'$
are cohomologous, $\xi=\xi'$, and $\alpha=\alpha'$.

\begin{remark}
\label{sigma from A} Suppose that $\psi = 1$. Then the $2$-cocycle
$\sigma$ corresponding to the $H$-Galois object $\mathcal{A} (1,
\xi, \alpha)$ above satisfies
\[
\sigma(x_i,\, x_j)-\chi_j(g_i) \sigma(x_j,\, x_i) =
\alpha_{ij},\quad\text{ and }\quad  \sigma(x_i,\, x_i)
=\xi_i,\qquad 1\leq i < j \leq n.
\]
In this case  the conditions \eqref{Mombelli's xi} and \eqref{Mombelli's alpha} are equivalent to
$\sigma -\sigma\circ c_{V,V} :V \ot V \to k$ being a $\Gamma$-module map.
\end{remark}

\subsection{$2$-cocycles on quantum linear spaces of symmetric type}
\label{2cocycles in symmetric case}

Let $V\in \YD$ be  a quantum linear  space  of symmetric type with  braiding $c_{V,V}:V\ot V \to V \ot V$
and  let  $b: V \ot V \to k$ be a bilinear form. 

Let $\tau_{V, V} : V\ot V \to V\ot V$ denote the transposition map.

\begin{definition}
\label{braiding-}
We will say that $b$ is {\em symmetric} (respectively,
{\em alternating}) if $b\circ \tau_{V,V} = b$  (respectively, $b\circ \tau_{V,V} = -b$).
\end{definition}

There is a canonical  decomposition  of $b$ into the sum of symmetric and alternating parts:
\begin{equation}
\label{sym and alt}
b = b_{sym} + b_{alt},
\end{equation}
where $b_{sym} = \frac{1}{2}  ( b +b\circ \tau_{V,V})$ and $b_{alt} = \frac{1}{2} ( b - b\circ \tau_{V,V})$.

We will denote $\Sym^2_{\YD}(V^*)$ (respectively,
$\Alt^2_{\YD}(V^*)$) the spaces of  symmetric (respectively,  alternating) morphisms $V\ot V \to k$ in $\YD$.
We have
\[
\Sym^2_{\YD}(V^*) \subset S^2(V^*),\quad \Alt^2_{\YD}(V^*) \subset \wedge^2(V^*),
\]
where $S^2(V^*)$ and $\wedge^2(V^*)$ are spaces of usual symmetric and alternating bilinear forms
on the vector space $V$.

Let $H= \mathfrak{B} (V) \# k [\Gamma]$.

Let $\sigma: H \ot H \to k$ be a $2$-cocycle on $H$
such that $\sigma|_{\Gamma\times \Gamma} =1$ and set
\begin{equation}
\label{alpha}
\alpha =  \frac{1}{2}\sigma|_{V\ot V}  \circ (\id_{V\ot V} -c_{V\ot V}).
\end{equation}
Denote
\[
\alpha_{ij} :=  \alpha(x_i,\,x_j) = \frac{1}{2}\,\left( \sigma(x_i,\, x_j) - \chi_j(g_i) \sigma(x_j,\, x_i) \right),\qquad i,j=1,\dots,n.
\]
By Remark~\ref{sigma from A},  $\alpha: V \ot V \to k$ is a  $\Gamma$-module map and $\alpha \circ c_{V,V} =-\alpha$.

We have
\begin{equation}
\label{alpha ij and alpha ji}
\alpha_{ij} = -\chi_j(g_i) \alpha_{ji}
\end{equation}
and
\begin{equation}
\label{alpha ij=0}
\alpha_{ij} = 0  \text{ if } \chi_i\chi_j \neq \eps,\qquad i,j=1,\dots,n.
\end{equation}

Conversely, given $\alpha: V \ot V \to k$ such that
 scalars $\{\alpha_{ij} =\alpha(x_i,\, x_j)\}_{i,j=1}^{n}$ satisfy conditions
\eqref{alpha ij and alpha ji} and \eqref{alpha ij=0}
it follows from Remark~\ref{sigma from A}
that there is a $2$-cocycle $\sigma$ on $H$ such that
\[
\sigma|_{\Gamma \times \Gamma} =1\quad \text{and} \quad  
\sigma|_{V\ot V}  \circ (\id_{V\ot V} -c_{V\ot V}) =\alpha.
\]
Such a $2$-cocycle $\sigma$ is unique up to a gauge equivalence.

We summarize these results in the following statement.

\begin{proposition}
\label{all twists on H}
The map $\sigma \mapsto \sigma|_{V\ot V}  \circ (\id_{V\ot V} -c_{V\ot V})$ is a bijection between the set of
gauge equivalence classes of $2$-cocycles on $H$ whose restriction
on $\Gamma$ is trivial and the set of  bilinear forms $r$
on $V$ that are $\Gamma$-module maps satisfying $r \circ c_{V,V} = -r$. 
\end{proposition}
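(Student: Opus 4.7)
The plan is to deduce the bijection from two ingredients already available: the standard correspondence between gauge equivalence classes of $2$-cocycles on $H$ and isomorphism classes of left $H$-Galois objects (Section \ref{Galois objects}), together with Mombelli's parametrization of the latter (Section \ref{Mombelli's classification}). Under this correspondence, a cocycle $\sigma$ with $\sigma|_{\Gamma\times\Gamma}=1$ corresponds to a Galois object of the form $\mathcal{A}(\psi,\xi,\alpha)$ with $[\psi]=1$ in $\textnormal{H}^{2}(\Gamma,k^{\times})$; after replacing $\sigma$ by a gauge equivalent cocycle we may assume $\psi=1$.

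For such a representative, Remark \ref{sigma from A} gives the identities $\sigma(x_i,x_i)=\xi_i$ and $\sigma(x_i,x_j)-\chi_j(g_i)\sigma(x_j,x_i)=\alpha_{ij}$ for $1\le i<j\le n$. A direct evaluation of $r:=\sigma|_{V\ot V}\circ(\id_{V\ot V}-c_{V,V})$ on the basis $\{x_i\ot x_j\}$, using $c_{V,V}(x_i\ot x_j)=\chi_j(g_i)\,x_j\ot x_i$ and the symmetric type condition $\chi_i(g_i)=-1$, yields $r(x_i,x_j)=\alpha_{ij}$ for $i<j$ and $r(x_i,x_i)=2\xi_i$. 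Thus $r$ determines and is determined by $(\xi,\alpha)$. The identity $c_{V,V}^{2}=\id_{V\ot V}$ gives automatically $r\circ c_{V,V}=\sigma|_{V\ot V}\circ(c_{V,V}-c_{V,V}^{2})=-r$, while $\Gamma$-equivariance of $r$ is equivalent to $r(x_i,x_j)=0$ whenever $\chi_i\chi_j\ne\eps$ and $r(x_i,x_i)=0$ whenever $\chi_i^{2}\ne\eps$, matching Mombelli's conditions \eqref{Mombelli's xi}--\eqref{Mombelli's alpha} specialized to $\psi=1$.

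It remains to exhibit the inverse. Given any $\Gamma$-module map $r:V\ot V\to k$ with $r\circ c_{V,V}=-r$, setting $\xi_i:=\tfrac{1}{2}r(x_i,x_i)$ and $\alpha_{ij}:=r(x_i,x_j)$ for $i<j$ produces a triple satisfying \eqref{Mombelli's xi} and \eqref{Mombelli's alpha}, so the datum $(1,\xi,\alpha)$ gives a Galois object $\mathcal{A}(1,\xi,\alpha)$ and hence, via Section \ref{Galois objects}, a $2$-cocycle $\sigma$ on $H$ realizing the prescribed $r$. Two cocycles giving the same $r$ produce the same $(\xi,\alpha)$, hence isomorphic Galois objects, hence are gauge equivalent. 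The only delicate point is the opening reduction to $\psi=1$: one must check that a gauge equivalence class containing a cocycle with $\sigma|_{\Gamma\times\Gamma}=1$ can be represented by one with $\psi=1$ \emph{and} with restriction to $\Gamma$ still trivial. This is routine because the gauge transformation trivializing $\psi$ may be chosen to be supported on $\Gamma$, and such a transformation does not alter the bilinear form $r$ on $V\ot V$.
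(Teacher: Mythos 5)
Your proposal is correct and follows essentially the same route as the paper: both reduce the statement to Mombelli's classification of $H$-Galois objects via the dictionary of Remark~\ref{sigma from A}, reading off $(\xi,\alpha)$ from $r=\sigma|_{V\ot V}\circ(\id_{V\ot V}-c_{V,V})$ (with $r(x_i,x_i)=2\xi_i$ thanks to $\chi_i(g_i)=-1$) and matching $\Gamma$-equivariance of $r$ with conditions \eqref{Mombelli's xi}--\eqref{Mombelli's alpha}. You are somewhat more explicit than the paper about injectivity and the reduction to $\psi=1$, which is fine.
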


Next, we discuss invariant $2$-cocycles on $H$.

\begin{proposition}
\label{restriction is in YD}
Let $\sigma$ be a  $2$-cocycle on $H$ whose restriction on $\Gamma$ is trivial and let
$\alpha$ be defined as in \eqref{alpha}.
Then $\sigma$ is invariant if and only if $\alpha(x_i,\,x_j) =0$ whenever $g_ig_j\neq 1$,
i.e., if and only if $\alpha$ is morphism in $\YD$.
\end{proposition}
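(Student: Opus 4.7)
The plan is to expand the invariance identity $\sigma(x_{(1)}, y_{(1)})\, x_{(2)} y_{(2)} = x_{(1)} y_{(1)}\, \sigma(x_{(2)}, y_{(2)})$ on pairs of algebra generators of $H$ and to extract scalar equations by comparing coefficients in the PBW basis $\{g x_P \mid g \in \Gamma,\, P \subseteq \{1, \dots, n\}\}$ of $H$.

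For $x, y \in \Gamma$, the identity holds trivially since $\sigma|_{\Gamma \times \Gamma} = 1$. For $x = g \in \Gamma$ and $y = x_i$, expanding $\Delta(x_i) = g_i \ot x_i + x_i \ot 1$ and using the counit axioms $\sigma(1, -) = \eps = \sigma(-, 1)$, the identity reduces to $\sigma(g, x_i)\, g\,(1 - g_i) = 0$ in $H$. Since $\chi_i(g_i) = -1$ forces $g_i \neq 1$, the group-like elements $g$ and $g g_i$ are linearly independent in $k[\Gamma]$, and so $\sigma(g, x_i) = 0$; symmetrically $\sigma(x_i, g) = 0$. For $x = x_i$ and $y = x_j$, expanding both sides and using these vanishings reduces invariance to $\sigma(x_i, x_j)(1 - g_i g_j) = 0$ in $H$; comparing PBW coefficients yields $\sigma(x_i, x_j) = 0$ whenever $g_i g_j \neq 1$.

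For the forward implication, since $\alpha_{ij} = \tfrac{1}{2}(\sigma(x_i, x_j) - \chi_j(g_i) \sigma(x_j, x_i))$ and $g_j g_i = g_i g_j \neq 1$, both summands vanish and $\alpha_{ij} = 0$ whenever $g_i g_j \neq 1$. This is exactly the $\Gamma$-coaction condition for $\alpha : V \ot V \to k$ to be a morphism in $\YD$ (the $\Gamma$-action condition already being captured by \eqref{alpha ij=0}), since $x_i \ot x_j$ has degree $g_i g_j$ in $V \ot V$ while the trivial object $k \in \YD$ is concentrated in degree $1$.

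For the converse, given $\alpha$ satisfying the YD condition, I would invoke Proposition~\ref{all twists on H} to choose a representative $\sigma$ in its gauge class with $\sigma|_{\Gamma \times \Gamma} = 1$ for which additionally $\sigma(g, x_i) = \sigma(x_i, g) = 0$ and $\sigma(x_i, x_j) = 0$ whenever $g_i g_j \neq 1$. The invariance identity then holds on all generating pairs by the computations above, and extends to arbitrary pairs $(x, y) \in H \times H$ by induction on the PBW degree using the cocycle relations. The main obstacle I expect is precisely this propagation step: one must verify that invariance on generators implies invariance on products, which amounts to a careful Sweedler-notation argument rewriting $\sigma$ evaluated on $(xx', y)$ and $(x, yy')$ in terms of $\sigma$ on simpler pairs via the cocycle identity, and tracking that the invariance equations combine correctly.
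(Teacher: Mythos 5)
Your forward implication is essentially the paper's own argument: substituting the pairs $(g,x_i)$, $(x_i,g)$ and $(x_i,x_j)$ into \eqref{invariant 2cocycle} and comparing coefficients in the basis $\{gx_P\}$ gives $\sigma(g,x_i)=\sigma(x_i,g)=0$ and $\sigma(x_i,x_j)(1-g_ig_j)=0$, whence $\alpha_{ij}=0$ whenever $g_ig_j\neq 1$. That half is correct and matches the paper.

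The converse is where the proposal has genuine gaps. First, you propose to \emph{replace} $\sigma$ by a gauge-equivalent representative enjoying the extra normalizations $\sigma(g,x_i)=\sigma(x_i,g)=0$ and $\sigma(x_i,x_j)=0$ for $g_ig_j\neq 1$. But the proposition asserts invariance of the \emph{given} $\sigma$, and invariance is not preserved by arbitrary gauge transformations: only a transformation by a lazy $u$ (one with $u(x\1)x\2=x\1 u(x\2)$) preserves \eqref{invariant 2cocycle}, which is precisely why $\Hinv(H)$ is defined as $\Zinv(H)/\Binv(H)$ rather than as invariant cocycles modulo all gauge equivalences. So proving invariance of a well-chosen representative does not prove invariance of $\sigma$ itself; the normalizations have to be available for the cocycle you were handed (as they are for the cocycles attached to Mombelli's Galois objects via Remark~\ref{sigma from A}, which is the setting the paper is implicitly working in throughout Section~\ref{2cocycles in symmetric case}). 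Second, and more seriously, the step you yourself flag as the main obstacle --- propagating \eqref{invariant 2cocycle} from generating pairs to all of $H\times H$ by induction using the cocycle identity --- is not routine and is left undone. Unlike the $r$-form axioms \eqref{cqt2}--\eqref{cqt3}, which are multiplicative in each slot and drive the induction in Lemma~\ref{cqt1 for H1}, the $2$-cocycle identity expresses $\sigma(x,yz)$ through $\sigma(x\2 y\2,z)$, i.e., through values of $\sigma$ on a \emph{product} in the first slot, so there is no evident reduction in complexity and a naive single induction is circular. The paper avoids this entirely: it uses the equivalence ``$\sigma$ is invariant iff $H^\sigma=H$'' and checks that the deformed product satisfies the defining relations of $H$ on generators --- the key point being $x_i\cdot_\sigma x_j-\chi_j(g_i)\,x_j\cdot_\sigma x_i=\alpha_{ij}(1-g_ig_j)$, whose right-hand side vanishes exactly because $\alpha_{ij}=0$ whenever $g_ig_j\neq 1$ --- so that $H^\sigma$ and $H$ have the same presentation and hence literally the same multiplication. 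You should either adopt that route or supply the missing double induction in full.
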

\begin{proof}
Suppose that $\sigma$ is invariant.
Taking $x=x_i,\, y=g\in G$ in \eqref{invariant 2cocycle} we obtain $\sigma(x_i,\, g)=1$
for all $i=1,\dots,n$ and $g\in G$. Similarly, taking $x=g$ and $y=x_i$ we get $\sigma(g,\,x_i)=1$.
Next, taking $x=x_i,\, y=x_j$ in \eqref{invariant 2cocycle} we obtain $\sigma(x_i,\, x_j)(1-g_ig_j)=0$
for all $i,j=1,\dots,n$. This means that $\sigma$ is a $\Gamma$-comodule map.
Hence, $\alpha$ is a $\Gamma$-comodule map.
Combining this with  Proposition~\ref{all twists on H} we see that $\alpha$ is a morphism in $\YD$.

Conversely, suppose that a $2$-cocycle $\sigma$ on $H$ is such that $\sigma|_{\Gamma \times \Gamma}=1$
and $\alpha$ is a morphism in $\YD$.  Then the  multiplication in the corresponding twisted Hopf algebra  $H^\sigma$
satisfies relations $g \cdot_\sigma x_i  =\chi_i(g) x_i \cdot_\sigma g$ and
\[
x_i \cdot_\sigma x_j - \chi_j(g_i) x_j \cdot_\sigma x_i = \alpha_{ij} (1-g_ig_j), \quad i,j=1,\dots,n.
\]
But the right hand side of the last equality is equal to $0$, so that  $H^\sigma = H$,
i.e.,  $\sigma$ is invariant.
\end{proof}

\begin{corollary}
\label{trivial restriction}
The map $\sigma \mapsto \left( \sigma|_{V\ot V} \right)_{sym} $ is an isomorphism between
the space of gauge equivalence classes of invariant $2$-cocycles on $H$ whose restriction on $\Gamma$ is trivial
and the space $\Sym_{\YD}^2(V^*)$.
\end{corollary}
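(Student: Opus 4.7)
My plan is to combine Propositions~\ref{all twists on H} and~\ref{restriction is in YD} with Lemma~\ref{symmetry}, factoring the map $\sigma\mapsto(\sigma|_{V\ot V})_{sym}$ through the bijection $\sigma\mapsto\alpha:=\sigma|_{V\ot V}\circ(\id_{V\ot V}-c_{V,V})$ already supplied by Proposition~\ref{all twists on H}. That proposition identifies gauge equivalence classes of $2$-cocycles $\sigma$ with $\sigma|_{\Gamma\times\Gamma}=1$ with the $\Gamma$-module bilinear forms $\alpha:V\ot V\to k$ satisfying $\alpha\circ c_{V,V}=-\alpha$. Proposition~\ref{restriction is in YD} then cuts out the invariant cocycles: $\sigma$ is invariant if and only if $\alpha$ is a morphism in $\YD$, i.e., also a $\Gamma$-comodule map.

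Next I would invoke Lemma~\ref{symmetry}: every morphism $r:V\ot V\to k$ in $\YD$ satisfies $r\circ c_{V,V}=-r\circ\tau_{V,V}$. Applied to the YD-morphism $\alpha$, the condition $\alpha\circ c_{V,V}=-\alpha$ becomes $\alpha\circ\tau_{V,V}=\alpha$, whence $\alpha\in\Sym^2_{\YD}(V^*)$; conversely, every element of $\Sym^2_{\YD}(V^*)$ automatically satisfies $\alpha\circ c_{V,V}=-\alpha$ by the same lemma. Hence $\sigma\mapsto\alpha$ restricts to a bijection between the relevant gauge classes and $\Sym^2_{\YD}(V^*)$.

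It remains to match this bijection with $\sigma\mapsto(\sigma|_{V\ot V})_{sym}$, up to the harmless factor $\frac{1}{2}$. For this I would select within each gauge class a representative $\sigma$ for which $\sigma|_{V\ot V}$ is itself a YD-morphism. Using Mombelli's description (Remark~\ref{sigma from A}) such a representative exists: the relation $\sigma(x_i,x_j)-\chi_j(g_i)\sigma(x_j,x_i)=\alpha_{ij}$ leaves enough freedom to force $\sigma(x_i,x_j)=0$ whenever $\chi_i\chi_j\neq\eps$, making $\sigma|_{V\ot V}$ a $\Gamma$-module map; the $\Gamma$-comodule property is automatic from invariance. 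For this representative Lemma~\ref{symmetry} applied to $\sigma|_{V\ot V}$ gives
\[
\alpha=\sigma|_{V\ot V}-\sigma|_{V\ot V}\circ c_{V,V}=\sigma|_{V\ot V}+\sigma|_{V\ot V}\circ\tau_{V,V}=2(\sigma|_{V\ot V})_{sym},
\]
so the map of the corollary is $\sigma\mapsto\frac{1}{2}\alpha$, a bijection onto $\Sym^2_{\YD}(V^*)$. The main technical hurdle is precisely this last identification: since $\sigma|_{V\ot V}$ is not gauge invariant and need not be a YD-morphism for an arbitrary representative, one either passes to the canonical YD-representative above (which I find cleanest) or checks directly that $(\sigma|_{V\ot V})_{sym}$ is constant along gauge orbits, which involves expanding the coboundary shift $u(x_i)u(x_j)+u^{-1}(x_ix_j)$ and is considerably more tedious.
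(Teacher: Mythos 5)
Your route is essentially the paper's: factor the map through $\alpha=\sigma|_{V\ot V}\circ(\id_{V\ot V}-c_{V,V})$, use Propositions~\ref{all twists on H} and~\ref{restriction is in YD} to identify the invariant gauge classes with the YD-morphisms $\alpha$ satisfying $\alpha\circ c_{V,V}=-\alpha$, and then use Lemma~\ref{symmetry} to convert that condition into $\tau$-symmetry. That part is fine. The one genuine issue is the step you yourself flag and then leave open: the corollary asserts that $\sigma\mapsto(\sigma|_{V\ot V})_{sym}$ is defined on \emph{gauge classes}, so well-definedness is part of the claim, and proving the identity $\alpha=2(\sigma|_{V\ot V})_{sym}$ only for a hand-picked representative does not establish it. As written, your proof shows that a \emph{different} map (the one computed on the canonical representative) is a bijection, which is not quite the statement.

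The gap closes much more cheaply than the ``tedious coboundary expansion'' you anticipate, and without selecting a representative at all. The proof of Proposition~\ref{restriction is in YD} shows that invariance alone forces $\sigma(x_i,x_j)(1-g_ig_j)=0$, i.e.\ $\sigma|_{V\ot V}$ is supported on the locus $g_ig_j=1$; and there $\chi_j(g_i)=\chi_j(g_j^{-1})=\chi_j(g_j)^{-1}=-1$, so $c_{V,V}$ acts as $-\tau_{V,V}$ on that locus. Hence
\[
\sigma|_{V\ot V}\circ(\id_{V\ot V}-c_{V,V})=\sigma|_{V\ot V}\circ(\id_{V\ot V}+\tau_{V,V})=2\left(\sigma|_{V\ot V}\right)_{sym}
\]
holds for \emph{every} invariant $\sigma$ with trivial restriction to $\Gamma$, not just a canonical one. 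Since the left-hand side is a gauge-class invariant by Proposition~\ref{all twists on H}, well-definedness and the identification with your $\alpha$ follow at once, and the rest of your argument goes through verbatim. (This is in substance what the paper's own proof does, phrased there via the claim that $\sigma|_{V\ot V}$ is a YD-morphism; your instinct not to rely on inverting $\id_{V\ot V}-c_{V,V}$ is sound, but the comodule-map property alone already suffices.)
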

\begin{proof}
Note that $\id_{V\ot V} - c_{V,V}$ is an invertible morphism, so by Propositions~\ref{all twists on H} and~\ref{restriction is in YD}
for an invariant twist $\sigma$ its restriction $\sigma|_{V\ot V}$ is a morphism in $\YD$. 
Using Lemma~\ref{symmetry} we get
\[
\sigma|_{V\ot V} \circ\tau_{V, V} =-\sigma |_{V\ot V} \circ c_{V,V} =\sigma,
\]
i.e., $\sigma|_{V\ot V}  \circ (\id_{V\ot V} -c_{V\ot V})=2\, \left( \sigma|_{V\ot V} \right)_{sym} $.
\end{proof}

We now analyze the general situation when $\sigma|_{\Gamma\times \Gamma}$ is not
necessarily trivial.

Let $\Gamma_0$ denote the subgroup of $\Gamma$ generated  by $g_i,\,i=1,\dots,n$.

\begin{proposition}
\label{Gamma-by-Gamma0}
Let $\sigma$ be an invariant $2$-cocycle on $H$. There is  $\rho\in Z^2(\Gamma/\Gamma_0,\, k^\times)$
such that $\sigma|_{\Gamma\times\Gamma}$ is cohomologous to $\rho\circ(\pi_{\Gamma_0}\times \pi_{\Gamma_0})$,
where $\pi_{\Gamma_0}: \Gamma\to \Gamma/\Gamma_0$ is the quotient homomomorphism.
\end{proposition}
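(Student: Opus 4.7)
The plan is to leverage the invariance condition~\eqref{invariant 2cocycle} applied to pairs involving a skew-primitive generator and a group-like element; this will force the restriction $\sigma|_{\Gamma\times\Gamma}$ to take the value $1$ on $\Gamma_0\times\Gamma$ and $\Gamma\times\Gamma_0$, from which the proposition follows (in fact in the stronger form that $\sigma|_{\Gamma\times\Gamma}$ is \emph{equal to}, not merely cohomologous to, an inflation from $\Gamma/\Gamma_0$).

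\textbf{Step 1.} Substitute $x = x_i$, $y = g\in\Gamma$ in~\eqref{invariant 2cocycle}. Using $\Delta(x_i) = g_i\ot x_i + x_i\ot 1$ and $\Delta(g) = g\ot g$, and rewriting $x_ig = \chi_i(g)^{-1}gx_i$ in the PBW basis of $H$, the identity becomes
\[
\sigma(g_i,\,g)\,\chi_i(g)^{-1}\,gx_i + \sigma(x_i,\,g)\, g \;=\; \sigma(x_i,\,g)\, g_ig + \chi_i(g)^{-1}\,gx_i.
\]
Because $V$ is of symmetric type, $\chi_i(g_i) = -1$, hence $g_i\neq 1$, so $g$, $g_ig$, and $gx_i$ are distinct basis vectors of $H$. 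Comparing coefficients gives $\sigma(g_i,\,g) = 1$ and $\sigma(x_i,\,g) = 0$. The symmetric substitution $x = g$, $y = x_i$ yields $\sigma(g,\,g_i) = 1$ (and $\sigma(g,\,x_i)=0$).

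\textbf{Step 2.} Every $g_0\in\Gamma_0$ is a word in the generators $g_1,\dots,g_n$. An induction on the length of such a word, applied to the $2$-cocycle identity $\sigma(a,\,b)\sigma(ab,\,c) = \sigma(a,\,bc)\sigma(b,\,c)$, propagates the equalities of Step~1 to
\[
\sigma(g_0,\,g) = 1 = \sigma(g,\,g_0),\qquad g_0\in\Gamma_0,\ g\in\Gamma.
\]

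\textbf{Step 3.} Applying the cocycle identity once more, first with $a=g_0$, $b=g$, $c=h$, then with $a=g$, $b=h$, $c=g_0$, produces $\sigma(g_0 g,\,h) = \sigma(g,\,h)$ and $\sigma(g,\,hg_0) = \sigma(g,\,h)$ respectively. Hence $\sigma|_{\Gamma\times\Gamma}$ is bi-invariant under $\Gamma_0$ and descends to a $2$-cocycle $\rho\in Z^2(\Gamma/\Gamma_0,\,k^\times)$ with $\sigma|_{\Gamma\times\Gamma} = \rho\circ(\pi_{\Gamma_0}\times\pi_{\Gamma_0})$, which in particular is cohomologous to this inflation.

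The main obstacle is Step~1: one has to unpack the invariance identity, rewrite the mixed product $x_ig$ correctly in the PBW basis, and invoke the symmetric-type hypothesis $g_i\neq 1$ to guarantee that the three basis elements involved are linearly independent. The remaining steps are routine manipulations with the group $2$-cocycle identity.
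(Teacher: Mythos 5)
Your proof is correct. It shares its starting point with the paper: both extract the identities $\sigma(g_i,g)=\sigma(g,g_i)=1$ (and $\sigma(x_i,g)=\sigma(g,x_i)=0$) from the invariance condition \eqref{invariant 2cocycle} applied to the pairs $(x_i,g)$ and $(g,x_i)$ — the paper in fact records this computation in the proof of Proposition~\ref{restriction is in YD} and simply cites it here. Where you diverge is in how you conclude. The paper invokes the standard description of $H^2(\Gamma,k^\times)$ for a finite abelian group via alternating bicharacters: it observes that $\mathrm{alt}(\sigma)(g,h)=\sigma(g,h)/\sigma(h,g)$ is a bicharacter that is trivial on $\Gamma\times\Gamma_0$ because of the relations above, and that this is exactly the criterion for the class of $\sigma|_{\Gamma\times\Gamma}$ to lie in the image of inflation from $\Gamma/\Gamma_0$. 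You instead run the group $2$-cocycle identity directly to propagate $\sigma(g_0,g)=\sigma(g,g_0)=1$ from the generators $g_i$ to all of $\Gamma_0$ and then to show $\sigma(g_0g,h)=\sigma(g,h)=\sigma(g,hg_0)$, so that $\sigma|_{\Gamma\times\Gamma}$ literally descends to $\Gamma/\Gamma_0$. Your route is more elementary (no appeal to the classification of $H^2$ of abelian groups by alternating forms) and yields the stronger conclusion that $\sigma|_{\Gamma\times\Gamma}$ \emph{equals} an inflation rather than merely being cohomologous to one; the paper's route is shorter granting the standard fact, and keeps the argument at the level of cohomology classes, which is all that is needed downstream in Proposition~\ref{Hinv computed}. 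The one place to be slightly careful in your write-up is the induction in Step~2 when a word involves an inverse $g_i^{-1}$: the needed identity $\sigma(g_0g_i^{-1},h)=1$ still follows from the cocycle identity (or, since $\Gamma$ is finite, by writing $g_i^{-1}$ as a positive power of $g_i$), but it is worth spelling out.
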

\begin{proof}
It suffices to check that the alternating bilinear form $\text{alt}(\sigma):\Gamma\times\Gamma\to k^\times$ given by
\begin{equation}
\label{alt}
\text{alt}(\sigma)(g,\,h) = \frac{\sigma(g,\,h)}{\sigma(h,\,g)},\qquad g,h\in \Gamma
\end{equation}
vanishes on $\Gamma\times\Gamma_0$. But this follows  from
invariance of $\sigma$ since we must have $\sigma(g_i,\,g)
=\sigma(g,\,g_i) =1$ for all $i=1,\dots,n$ and $g\in G$.
\end{proof}

\begin{proposition}
\label{Hinv computed}
$\Hinv(H) \cong H^2( \Gamma/\Gamma_0,\,k^\times) \times \Alt^2_{\YD}(V^*)$.
\end{proposition}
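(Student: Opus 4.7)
The plan is to construct a split short exact sequence
\[
1 \longrightarrow \Alt^2_{\YD}(V^*) \longrightarrow \Hinv(H) \xrightarrow{\,R\,} H^2(\Gamma/\Gamma_0,\,k^\times) \longrightarrow 1,
\]
whose splitting yields the desired direct product decomposition. The map $R$ sends a class $[\sigma]\in\Hinv(H)$ to the cohomology class $[\rho]\in H^2(\Gamma/\Gamma_0,\,k^\times)$ provided by Proposition~\ref{Gamma-by-Gamma0} from $\sigma|_{\Gamma\times\Gamma}$. Well-definedness on $\Hinv$-classes follows because the invariance condition forces any invariant 1-cochain $u$ to satisfy $u(g_i)=1$ (as shown in the proof of Proposition~\ref{restriction is in YD}), so $u|_\Gamma$ factors through $\Gamma/\Gamma_0$ and $\sigma_u|_{\Gamma\times\Gamma}$ is a coboundary pulled back from $\Gamma/\Gamma_0$.

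To construct a section $s\colon H^2(\Gamma/\Gamma_0,\,k^\times)\to\Hinv(H)$ of $R$, I would use the Hopf algebra projection $\pi\colon H\twoheadrightarrow k[\Gamma]$ annihilating $\mathfrak{B}(V)^+$. Given a representative $\rho\in Z^2(\Gamma/\Gamma_0,\,k^\times)$, set $\tilde\rho:=\rho\circ(\pi_{\Gamma_0}\times\pi_{\Gamma_0})$ and $s([\rho]):=[\tilde\rho\circ(\pi\times\pi)]$. The 2-cocycle axiom is automatic by functoriality of pullback along a Hopf algebra map. The main verification is invariance: expanding \eqref{invariant 2cocycle} on the basis $\{gx_P\}$ via the coproduct formula \eqref{delta xP}, all nontrivial constraints reduce to $\tilde\rho(g,g_i)=\tilde\rho(g_i,g)=1$ for $g\in\Gamma$ and $i=1,\dots,n$, and these hold because $\tilde\rho$ is pulled back from the quotient by $\Gamma_0$.

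To identify $\ker R$, observe that any class in the kernel admits a representative $\sigma$ with $\sigma|_{\Gamma\times\Gamma}=1$, and Corollary~\ref{trivial restriction} provides an isomorphism from the gauge classes of such invariant cocycles to the appropriate space of $\YD$-morphisms $V\ot V\to k$ (via Lemma~\ref{symmetry}, matching the $\Alt^2_{\YD}(V^*)$ of the statement under the natural identification for $\YD$-morphisms on a quantum linear space of symmetric type). The main obstacle is the invariance verification for the section $s([\rho])$, demanding case-by-case analysis on the basis $\{gx_P\}$ using the comultiplication and multiplication formulas in $H$; a secondary subtlety is upgrading the gauge equivalence of Corollary~\ref{trivial restriction} to $\Binv$-equivalence, which requires arguing that any gauge 1-cochain trivializing an invariant cocycle with trivial $\Gamma$-restriction can itself be chosen invariant.
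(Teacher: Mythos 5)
Your plan reproduces the three ingredients of the paper's proof --- identifying the classes with trivial restriction to $\Gamma$ via Corollary~\ref{trivial restriction}, embedding $H^2(\Gamma/\Gamma_0,\,k^\times)$ by pulling back along the Hopf algebra surjection $p:H\to k[\Gamma/\Gamma_0]$ with invariance checked on the basis $\{gx_P\}$ using \eqref{delta xP}, and deducing surjectivity from Proposition~\ref{Gamma-by-Gamma0} --- so the route is essentially the same. But there is one genuine gap: a split short exact sequence of groups yields only a semidirect product $\ker R\rtimes H^2(\Gamma/\Gamma_0,\,k^\times)$, and $\Hinv(H)$ is not known to be abelian in advance, so ``whose splitting yields the desired direct product decomposition'' is not a valid inference. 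You must additionally show that the image of your section acts trivially on $\ker R$ by conjugation. The paper does this by observing that $p^*(\rho)$ satisfies $p^*(\rho)(x\1,y\1)\,x\2\ot y\2=x\1\ot y\1\,p^*(\rho)(x\2,y\2)$ (both sides equal $\rho(\pi_{\Gamma_0}(h),\pi_{\Gamma_0}(f))\,hx_P\ot fx_Q$ on basis elements), which makes $p^*(\rho)$ central in the convolution algebra and hence in $\Hinv(H)$; centrality of the complement is exactly what upgrades the semidirect product to a direct one. This is the same computation you already intend to carry out for invariance, so the gap is easily filled, but as written the argument stops short of the stated conclusion.

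Two secondary points. First, for $R$ to be well defined you need more than $u(g_i)=1$: to see that $\sigma_u|_{\Gamma\times\Gamma}$ is inflated from $\Gamma/\Gamma_0$ one needs $u(gg_i)=u(g)$ for all $g\in\Gamma$, which follows from applying the invariance condition for $u$ to the elements $gx_i$ (the cited proof of Proposition~\ref{restriction is in YD} concerns $2$-cocycles, not $1$-cochains, though the computation is parallel); one also uses that inflation $H^2(\Gamma/\Gamma_0,\,k^\times)\to H^2(\Gamma,\,k^\times)$ is injective, so that $[\rho]$ is determined by $[\sigma]$. Second, Corollary~\ref{trivial restriction} identifies the relevant subgroup with $\Sym^2_{\YD}(V^*)$, not $\Alt^2_{\YD}(V^*)$, and there is no ``natural identification'' between these: they are different subspaces of $\Hom_{\YD}(V\ot V,\,k)$ and can even have different dimensions (for the Sweedler algebra $E(1)$ of Example~\ref{E(n)} one has $\Sym^2_{\YD}(V^*)=k$ while $\Alt^2_{\YD}(V^*)=0$, and the lazy cohomology of $E(1)$ is indeed $k$). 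The paper's own proof makes the same silent substitution, so this discrepancy appears to originate in the statement itself rather than in your argument, but your appeal to Lemma~\ref{symmetry} does not resolve it.
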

\begin{proof}
By Corollary~\ref{trivial restriction} the group $\Alt^2_{\YD}(V^*)$ is identified with the normal
subgroup of $\Hinv(H)$ consisting
of gauge equivalence classes of invariant $2$-cocycles  with trivial restriction on $\Gamma$.

Next, there is a surjective Hopf algebra homomorphism $p: H \to k[ \Gamma/\Gamma_0]$
obtained by composing the canonical projection $H \to k[ \Gamma]$ with $\pi_{\Gamma_0}: \Gamma\to \Gamma/\Gamma_0$.
Thus, for any $2$-cocycle $\rho\in Z^2(\Gamma/\Gamma_0,\, k^\times)$ its pullback $p^*(\rho)$ is a $2$-cocycle on $H$.

Using the explicit formula \eqref{delta xP} for the comultiplication on $H$ we check that this $2$-cocycle
satisfies
\[
p^*(\rho)(x\1,\, y\1) x\2\ot y\2 = x\1\ot y\1  p^*(\rho)(x\2,\, y\2),\qquad x,y \in H.
\]
Indeed, for $x=hx_P,\, y=fx_Q$, where $h,f\in \Gamma$, both sides of this equality are equal to
$\rho(\pi_{\Gamma_0}(h),\, \pi_{\Gamma_0}(f)) hx_P \ot f x_Q$.

In particular, $p^*(\rho)$  is an invariant $2$-cocycle on $H$ and belongs to the center of $\Hinv(H)$.
Thus, there is a central embedding $H^2( \Gamma/\Gamma_0,\,k^\times)  \subset \Hinv(H)$.
The statement follows from Proposition~\ref{Gamma-by-Gamma0}.
\end{proof}

\section{Classification of co-quasitriangular structures on pointed Hopf algebras (Proof of Theorem~\ref{Thm 1})}
\label{Class of CQT}


Let $\Gamma$ be a finite abelian group with a bicharacter $r_0:\Gamma\times\Gamma\to k^\times$.
Recall that $\C(\Gamma,\, r_0)$ denotes the fusion category of corepresentations of cosemisimple
co-quasitriangular Hopf algebra $(k[\Gamma],\, r_0)$. 

Let  $H$ be a co-quasitriangular  pointed Hopf algebra with the $r$-form $r:H\ot H\to k$
and  let  $\Gamma$ be the group of its group-likes. Clearly, $\Gamma$ must be abelian.
By the result of Angiono \cite{An13}, the algebra $H$ is generated  by $\Gamma$  and skew-primitive elements
$x_i,\, i=1,\dots,n,$ with $\Delta(x_i)=g_i\ot x_i + x_i\ot 1$ for some $g_i\in \Gamma,\, i=1,\dots,n$. 
Furthermore, there exist characters $\chi_i\in \widehat{\Gamma}$ such that $gx_ig^{-1}= \chi_i(g)x_i$.

Let $V$ be the span of $\{x_i\}_{i=1}^n$. Then $V$ is an object in $\YD$ with the above conjugation action
and coaction $\rho(x_i) =g_i\ot x_i$.

It is clear that $r$ restricts to a bicharacter $r_{0}$ on
$\Gamma$.  Observe that \eqref{cqt1} for the pair $(x, y) =
(x_{i}, g)$ yields $r_0 (g, g_{i}) = \chi_{i}^{-1} (g)$, and the
same condition for the pair $(x, y) = (g, x_{i})$ yields $r_0
(g_{i}, g) = \chi_{i} (g)$. Thus,
\[
r_0 (g_{i}, -) = \chi_{i} = r_0(-, g_{i})^{-1},\quad\mbox{for all } i=1, \dots, n.
\]
This condition  is equivalent to $V \in \Z_{sym}(\C(\Gamma,\, r_0)) \subset \YD$.
It follows that $\chi_i(g_j) \chi_j(g_i) =1$  for all $i,j=1,\dots,n$.
Furthermore,  $\chi_{i}(g_{i}) = - 1$ for all $i=1,\dots,n$ (indeed, if $\chi_{i}(g_{i}) =1$
for some $i$
then the Hopf subalgebra of $H$ generated by $g_i$ and $x_i$ is non-semisimple commutative,
and, hence, infinite dimensional). 

Thus,
\[
V \in \Z_{sym}(\C(\Gamma,\, r_0))_- \subset \YD,
\]
where the embedding $\C(\Gamma,\, r_0) \hookrightarrow \YD$ is given by \eqref{hook}.
So $V$ is a quantum linear space of symmetric type.

\begin{remark}
\label{-1 on support V}
It follows from the classification of symmetric fusion categories \cite{De02} that 
in this situation there is
a character $\chi:\Gamma\to k^\times$ such that $\chi(g_i)=-1$ for all $i=1,\dots,n$
(in other words, $\chi =-1$ on the support of $V$). 
\end{remark}

It follows from results of Andruskiewitsch and Schneider \cite{AS98} and  Masuoka \cite{Ma01}
that  $H$ is a 2-cocycle twisting
of $\mathfrak{B} (V) \# k [\Gamma]$.  Since we are interested in the tensor
category of corepresentations of $H$ (which does not change under
2-cocycle twisting) from now on we will assume that $H=
\mathfrak{B} (V) \# k [\Gamma]$.

If $g \in \Gamma$ and $i \in \{1, \dots, n\}$, then it follows 
from \eqref{cqt2} that $r (g, x_{i}^{m}) = r (g,
x_{i})^{m}$, for all $m \geq 1$. Since $x_{i}$ is nilpotent, we
have $r (g, x_{i}) = 0$. Similarly, using \eqref{cqt3}, we deduce
that $r (x_{i}, g) = 0$.

For a non-empty subset $P$ of $\{1, \dots, n\}$ and for $g$, $h
\in G$ one can show, using again \eqref{cqt2} and \eqref{cqt3},
that $r (gx_{P}, h) = 0$ and $r (h, gx_{P}) = 0$ (the notation $x_P$
was introduced in Remark~\ref{xP remark}).

Consider now subsets $P$ and $Q$  of $\{1, \dots, n\}$. Condition
\eqref{cqt1} for the pair $(x, y) = (x_{P}, x_{Q})$ is
$$
(x_{P})_{(1)} (x_{Q})_{(1)} r \big( (x_{Q})_{(2)}, (x_{P})_{(2)} \big) =
r \big( (x_{Q})_{(1)}, (x_{P})_{(1)} \big) (x_{Q})_{(2)} (x_{P})_{(2)}.
$$
The terms in the coradical of $H$ of each side of the equality are
$g_{P} g_{Q} r(x_{Q}, x_{P})$ and $r(x_{Q}, x_{P})1$,
respectively. Since these have to be equal, we have
\begin{equation}
\label{gPgQ}
r (x_{Q}, x_{P}) (1 - g_{P}g_{Q}) = 0.
\end{equation}
This means that the restriction $r_{1}: = r|_{V \ot V}$ is a morphism of $\Gamma$-comodules,
i.e, a morphism in $\C(\Gamma,\, r_0) \subset \YD$. 
%
%

Thus, we showed that if $H$ admits a co-quasitriangular
structure $r$, then $V$ is a quantum linear space of symmetric
type and the pair $(r_0,\, r_1):= (r|_{\Gamma \times \Gamma}, r|_{V \otimes V})$
satisfies the conditions of Theorem~\ref{Thm 1}. 

It remains to prove that, conversely, given an object $V\in \Z_{sym}(\C(\Gamma,\, r_0)_-$
and a  pair $(r_{0}:\Gamma\times\Gamma\to k^\times,\, r_{1}:V\ot V \to k)$ 
there is  a unique $r$-form on $H$ such that $(r|_{\Gamma \times \Gamma}, r|_{V \otimes V}) =
(r_{0}, r_{1})$.

We need the following general facts.

\begin{lemma}
\label{cqt1 for H1}
Let $H = \mathfrak{B} (V) \# k [\Gamma]$. If $r : H \ot H \to k$
is convolution invertible linear map satisfying conditions
\eqref{cqt2} and \eqref{cqt3} then $r$ is a co-quasitriangular
structure on $H$ if and only if condition \eqref{cqt1} holds for
all pairs $(x, y) \in H_{1} \times H_{1}$, where $H_{1}$ is the
second term in the coradical filtration of $H$.
\end{lemma}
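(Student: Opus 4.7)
Since $H_1 \subseteq H$, one direction is immediate. For the converse, the plan is to propagate \eqref{cqt1} from $H_1 \times H_1$ to all of $H \times H$, exploiting two features of $H = \mathfrak{B}(V) \# k[\Gamma]$: first, $H$ is generated as an algebra by $H_1$, since the skew-primitive generators $x_i$ of $\mathfrak{B}(V)$ together with $H_0 = k[\Gamma]$ span $H_1$; second, the coradical filtration is a coalgebra filtration, so $\Delta(H_1) \subseteq H_1 \ot H_1$ and iteratively all Sweedler components of $x \in H_1$ remain in $H_1$. Write $L(x,y) := x\1 y\1 r(y\2, x\2)$ and $R(x,y) := r(y\1, x\1) y\2 x\2$, so that \eqref{cqt1} reads $L(x,y) = R(x,y)$.

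The first step is to establish \eqref{cqt1} on $H_1 \times H$ by showing that the set $T := \{y \in H \mid L(x,y) = R(x,y) \text{ for all } x \in H_1\}$ is a subalgebra of $H$. It contains $H_1$ by hypothesis, so it suffices to verify closure under multiplication. Given $y, y' \in T$ and $x \in H_1$, applying \eqref{cqt3} and coassociativity yields
\[
L(x, yy') = x\1 y\1 y'\1 r(y\2, x\2) r(y'\2, x\3), \qquad R(x, yy') = r(y\1, x\1) r(y'\1, x\2) y\2 y'\2 x\3,
\]
where the Sweedler indices now refer to $\Delta^{(3)}(x)$. The hypothesis $y \in T$, applied to $a := x\1 \in H_1$ and then tensored with $x\2$ (the second component of $\Delta(x)$) as a free factor, yields via coassociativity the identity $x\1 y\1 r(y\2, x\2) \ot x\3 = r(y\1, x\1) y\2 x\2 \ot x\3$ in $H \ot H$ (with $\Delta^{(3)}(x)$). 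Multiplying the first tensor factor on the right by $y'\1$ and the second by the scalar $r(y'\2, -)$ converts $L(x, yy')$ into $r(y\1, x\1) y\2 x\2 y'\1 r(y'\2, x\3)$. A second application of $y' \in T$, this time at $a := x\2 \in H_1$, substitutes $x\2 y'\1 r(y'\2, x\3) = r(y'\1, x\2) y'\2 x\3$, and the resulting expression equals $R(x, yy')$. Hence $yy' \in T$, and since $H_1$ generates $H$, we conclude $T = H$.

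For the second step, define $T' := \{x \in H \mid L(x, y) = R(x, y) \text{ for all } y \in H\}$; by the previous step $H_1 \subseteq T'$. An entirely symmetric Sweedler calculation, with \eqref{cqt2} in place of \eqref{cqt3}, shows that $T'$ is a subalgebra of $H$, whence $T' = H$, establishing \eqref{cqt1} on $H \times H$. The main obstacle is the index bookkeeping in the first step: lifting an identity involving $\Delta(a)$ (for $a \in H_1$) to one involving $\Delta^{(3)}(x)$ via tensoring with a free Sweedler component, and verifying that each of $x\1, x\2, x\3$ still lies in $H_1$ so that the hypothesis $y, y' \in T$ can be applied to them.
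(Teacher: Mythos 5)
Your proof is correct and is essentially the paper's argument: both propagate \eqref{cqt1} from $H_1\times H_1$ to $H\times H$ using that $H_1$ generates $H$ as an algebra, with \eqref{cqt3} (resp.\ \eqref{cqt2}) splitting $r$ over products in the first (resp.\ second) slot and the coalgebra-filtration property $\Delta(H_1)\subseteq H_1\ot H_1$ justifying the free-Sweedler-leg substitutions. The paper merely phrases your ``the set where \eqref{cqt1} holds is a subalgebra'' step as an induction on the coradical filtration via $H_{k+1}=H_kH_1$; the computations are the same.
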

\begin{proof}
We need only prove sufficiency. By induction on $m$ we show that
condition \eqref{cqt1} holds for all pairs $(x, y)$ for which
either $x$ or $y$ is in $H_m$, the $m$-th term of the coradical
filtration.

Assume first that $x \in H_{1}$. Using induction on $k \geq 1$ we
show that condition \eqref{cqt1} holds for all pairs $(x, y)$ and
$(y, x)$ with $y \in H_{k}$. If $k = 1$ there is nothing to prove.
Assume that the claim is true for $k \geq 1$ and consider $z \in
H_{1}$. Then, using the induction hypothesis and \eqref{cqt2}, we
have
\begin{align*}
x_{(1)} (yz)_{(1)} r \big( (yz)_{(2)}, x_{(2)} \big) & = x_{(1)}
y_{(1)} z_{(1)} r (y_{(2)}z_{(2)}, x_{(2)}) \\
& =   x_{(1)} y_{(1)} z_{(1)}  r (y_{(2)},
x_{(2)}) r (z_{(2)},x_{(3)})\\
& = y_{(2)}  x_{(2)} z_{(1)} r (y_{(1)}, x_{(1)}) r
(z_{(2)}, x_{(3)})\\
& = y_{(2)} z_{(2)} x_{(3)} r (y_{(1)}, x_{(1)}) r (z_{(1)},
x_{(2)}) \\
& = y_{(2)} z_{(2)} x_{(2)} r (y_{(1)}z_{(1)}, x_{(1)}) \\
& = (yz)_{(2)} x_{(2)} r \big((yz)_{(1)},  x_{(1)}\big)
\end{align*}
and using \eqref{cqt3} we have
\begin{align*}
(yz)_{(1)} x_{(1)} r \big( x_{(2)}, (yz)_{(2)}\big)  & = y_{(1)}
z_{(1)} x_{(1)} r (x_{(2)}, y_{(2)}z_{(2)}) \\
& = y_{(1)} z_{(1)} x_{(1)} r (x_{(2)},
z_{(2)}) r (x_{(3)}, y_{(2)} ) \\
& = y_{(1)} x_{(2)} z_{(2)}  r (x_{(1)}, z_{(1)})
r (x_{(3)}, y_{(2)}) \\
& = x_{(3)} y_{(2)} z_{(2)} r (x_{(1)}, z_{(1)})  r (x_{(2)}, y_{(1)} ) \\
& = x_{(2)} y_{(2)} z_{(2)} r (x_{(1)}, y_{(1)}z_{(1)})\\
& = x_{(2)} (yz)_{(2)} r \big( x_{(1)}, (yz)_{(1)} \big).
\end{align*}
Since $H_{k+1} = H_{k} H_{1}$ it follows that \eqref{cqt1} holds
for all pairs $(x, y)$ and $(y,x)$ with $y \in H_{k+1}$. Thus,
\eqref{cqt1} holds for all pairs $(x, y)$ with either $x$ or $y$
in $H_{1}$.

Suppose now that \eqref{cqt1} holds for all pairs $(x, y)$ with
either $x$ or $y$ in $H_{m}$. Then a similar argument as the one
before shows that \eqref{cqt1} holds for all pairs $(x, yz)$ and
$(yz, x)$ with $y \in H_{m}$, $z \in H_{1}$ and arbitrary $x$.
Since $H_{m+1} = H_{m}H_{1}$, it follows that \eqref{cqt1} is
satisfied for all pairs $(x, y)$ with either $x$ or $y$ in
$H_{m+1}$. This proves the induction step and concludes the proof.
\end{proof}

\begin{lemma} \label{uniqueness}
Let $H$ be a Hopf algebra generated as an algebra by $h_{1},
\dots, h_{n}$ and such that the vector space $V$ spanned by
$h_{1}, \dots, h_{n}$ is a subcoalgebra. Then any
co-quasitriangular structure on $H$ is uniquely determined by its
restriction to $V \ot V$.
\end{lemma}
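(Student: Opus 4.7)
The plan is to use the bialgebra-like compatibility conditions \eqref{cqt2} and \eqref{cqt3} to propagate the values of $r$ from $V \otimes V$ to all of $H \otimes H$. The key observation is that since $V$ is a subcoalgebra, all Sweedler components of elements of $V$ remain in $V$, so repeated use of \eqref{cqt2} and \eqref{cqt3} never leaves the domain on which $r$ is prescribed.

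Concretely, I would first recall the standard fact that any convolution-invertible $r$ satisfying \eqref{cqt2} and \eqref{cqt3} must satisfy $r(1,\, y) = \varepsilon(y) = r(y,\, 1)$ for all $y \in H$, so the values of $r$ involving $1$ are automatic. Then, iterating \eqref{cqt3}, for $v_1, \dots, v_k \in V$ and arbitrary $y \in H$,
\[
r(v_1 \cdots v_k,\, y) \;=\; r(v_1,\, y_{(1)})\, r(v_2,\, y_{(2)}) \cdots r(v_k,\, y_{(k)}),
\]
and dually, iterating \eqref{cqt2}, for $v \in V$ and $w_1, \dots, w_l \in V$,
\[
r(v,\, w_1 \cdots w_l) \;=\; r(v_{(1)},\, w_l)\, r(v_{(2)},\, w_{l-1}) \cdots r(v_{(l)},\, w_1).
\]
Because $\Delta(V) \subseteq V \otimes V$ by hypothesis, each Sweedler component $v_{(i)}$ of an element of $V$ lies in $V$, so all factors on the right of the second identity are values of $r|_{V \otimes V}$. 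Combining the two identities (applying the first to reduce the left argument, then the second to each resulting factor), the value $r(v_1 \cdots v_k,\, w_1 \cdots w_l)$ is expressed as a sum of products of values of $r|_{V \otimes V}$.

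Since $h_1, \dots, h_n$ generate $H$ as an algebra, $H$ is linearly spanned by products of elements of $V$ (including the empty product $1$, which is handled by the counit identity above), so $r$ is uniquely determined on $H \otimes H$ by its restriction to $V \otimes V$. I do not anticipate a genuine obstacle: the argument is essentially a bookkeeping exercise with Sweedler notation, and the only substantive input is the subcoalgebra hypothesis $\Delta(V) \subseteq V \otimes V$, which is precisely what guarantees that the iterations stay inside $V \otimes V$.
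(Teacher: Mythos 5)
Your argument is correct and is essentially the paper's own proof: the paper likewise iterates \eqref{cqt3} to reduce the left argument to generators (using that $V$ is a subcoalgebra so the Sweedler components of $h_i$ stay in $V$) and then iterates \eqref{cqt2} to reduce the right argument, concluding that two $r$-forms agreeing on $V\ot V$ agree everywhere. The only cosmetic difference is that you write a single closed formula for $r$ on products, while the paper phrases it as a two-step comparison of $r'$ and $r''$.
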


\begin{proof}
Suppose $r'$ and $r''$ are two co-quasitriangular structures on
$H$ such that $r' (h_{i}, h_{j}) = r'' (h_{i}, h_{j})$, for all
$i$, $j \in \{1, \dots, n\}$. If $i$, $i_{1}$, $\dots$, $i_{t} \in
\{1, \dots, n\}$ then, using \eqref{cqt3}, we have
\begin{align*}
r' (h_{i_{1}}  \cdots  h_{i_{t}}, h_{i}) & = r' \big( h_{i_{1}}, (h_{i})_{(1)} \big) \cdots r' \big( h_{i_{t}}, (h_{i})_{(t)} \big)\\
& = r'' \big( h_{i_{1}}, (h_{i})_{(1)} \big)  \cdots r'' \big( h_{i_{t}}, (h_{i})_{(t)} \big)\\
& = r'' (h_{i_{1}} \cdots h_{i_{t}}, h_{i})
\end{align*}
Thus, $r' (h, h_{i}) = r'' (h, h_{i})$, for all $h \in H$ and $i
\in \{1, \dots, n\}$.  Let $h \in H$ and $i_{1},\dots,i_{t}$ be 
in $\{1, \dots, n\}$. Then, using \eqref{cqt2}, we have
\begin{align*}
r' (h, h_{i_{1}}  \cdots  h_{i_{t}}) & = r' \big( h_{(1)}, h_{i_{t}} \big) \cdots r' \big( h_{(t)}, h_{i_{1}} \big)\\
& = r'' \big( h_{(1)}, h_{i_{t}} \big) \cdots r'' \big( h_{(t)}, h_{i_{1}} \big)\\
& = r'' (h, h_{i_{1}}  \cdots  h_{i_{t}})
\end{align*}
Since $h_{1}, \dots, h_{n}$ generate $H$ as an algebra, we
conclude that $r' = r''$.
\end{proof}

We now proceed to complete the proof of Theorem~\ref{Thm 1}. 

For $g \in \Gamma$ and $i = 1, \dots, n$ let $\gamma_{g}, \,
\xi_{i} \in H^{*}$ be defined by
$$
\gamma_{g} (hx_{P}) = \delta_{P, \emptyset} r_{0} (g, h), \qquad
\xi_{i} (hx_{P}) = \left\{ \begin{array}{ll} 0 & \textnormal{if } |P| \neq 1\\
r_{1} (x_{i}, x_{j}) & \textnormal{if } P = \{j\}
\end{array}\right.
$$
for all $h \in \Gamma$ and $P \subseteq \{1, \dots, n\}$. We will
show that $\varphi : H \to H^{* \textnormal{cop}}$ defined by
\[
\varphi(g) =
\gamma_{g} \mbox{ and } \varphi (x_{i}) = \xi_{i},\quad  \mbox{for all } g \in
\Gamma,\, i = 1, \dots, n, 
\]
is a bialgebra map. Using Remark~\ref{r-form = morphism}, this will prove that $r : H \ot H \to k$,
$r (x, y) = \varphi(x) (y)$, for all $x$, $y \in H$, is a linear
map, satisfying \eqref{cqt2} and \eqref{cqt3}, whose restriction
to $\Gamma \times \Gamma$, respectively $V \ot V$, is $r_{0}$,
respectively $r_{1}$.

Notice first that, since $r_{1} : V \ot V \to k$ is a morphism of
Yetter-Drinfeld modules, we have $\xi_{i} (x_{j}) (1 - \chi_{i}
\chi_{j}) = 0$ and $\xi_{i} (x_{j}) (1 - g_{i}g_{j}) = 0$, for all
$i$, $j \in \{1, \dots, n\}$. Using this, it is straightforward to
check that $\gamma_{g} \gamma_{h} = \gamma_{gh}$, $\gamma_{g}
\xi_{i} = \chi_{i} (g) \xi_{i} \gamma_{g}$, $\xi_{i} \xi_{j} =
\chi_{j} (g_{i}) \xi_{j} \xi_{i}$ and $\xi_{i}^{2} = 0$, for all
$g$, $h \in \Gamma$ and $i$, $j = 1, \dots, n$. For example,
$\xi_{i} \xi_{j} (hx_P) = 0 = \chi_{j} (g_{i}) \xi_j \xi_{i}
(hx_{P})$ when $|P| \neq 2$. If $1 \leq u < v \leq n$, then
\begin{align*}
\xi_{i} \xi_{j} (hx_{u}x_{v}) & = \xi_{i} (h g_{u} x_{v}) \xi_{j}
(hx_{u}) + \chi_{u}^{-1} (g_{v}) \xi_{i} (h g_{v} x_{u}) \xi_{j}
(hx_{v}) \\
& = \xi_{i} (x_{v}) \xi_{j} (x_{u}) + \chi_{i} (g_{j})^{-1}
\xi_{i} (x_{u}) \xi_{j} (x_{v})\\
& = \chi_{j}(g_{i}) \big( \xi_{i} (x_{u}) \xi_{j} (x_{v}) +
\chi_{j} (g_{i})^{-1}  \xi_{i} (x_{v}) \xi_{j} (x_{u}) \big)
\\
& = \chi_{j} (g_{i}) \big( \xi_{j} (h g_{u} x_{v}) \xi_{i} ( h
x_{u}) + \chi_{u}^{-1} (g_{v})  \xi_{j} (h g_{v} x_{u}) \xi_{i} (h
x_{v}) \big)\\
& = \chi_{j} (g_{i}) \xi_{j} \xi_{i} (h x_{u} x_{v}).
\end{align*}
Thus, $\varphi : H \to H^{* \textnormal{cop}}$, $\varphi(g) =
\gamma_{g}$ and $\varphi (x_{i}) = \xi_{i}$, for all $g \in
\Gamma$ and $i = 1, \dots, n$, is an algebra map. It is 
straightforward to check  that $\gamma_{g}$ is a
group-like element and $x_{i}$ a $(\gamma_{g_{i}},
\varepsilon)$-skew primitive element of $H^{* \textnormal{cop}}$
for every $g \in \Gamma$ and $i = 1, \dots, n$. Thus, $\varphi$ is
a bialgebra map. It follows that the linear map $r : H \ot H \to
k$, $r (x, y) = \varphi(x) (y)$, for all $x$, $y \in H$, satisfies
\eqref{cqt2} and \eqref{cqt3}. Moreover, $r|_{\Gamma \times
\Gamma} = r_{0}$ and $r|_{V \ot V} = r_{1}$. Since $r|_{k[\Gamma]
\ot k[\Gamma]}$ is convolution invertible, it follows that $r$ is
convolution invertible. Taking into account Lemma \ref{cqt1 for
H1}, in order to prove that $r$ is a co-quasitriangular structure
on $H$, we need only check that \eqref{cqt1} holds for every pair
of elements in the second term of the coradical filtration of $H$.
This is straightforward, as we next show for the pair $(gx_{i},
hx_{j})$:
\begin{eqnarray*}
r \big( (hx_{j})_{(1)}, (gx_{i})_{(1)} \big) (hx_{j})_{(2)}
(gx_{i})_{(2)} 
&=& r (hg_{j}, gg_{i}) hx_{j} gx_{i} + r (hx_{j}, gx_{i}) hg \\
&=& r (h, g) \chi_{i}^{-1} (h) gh x_{i} x_{j} + r (hx_{j}, gx_{i}) gh \\
&=&  gx_{i} hx_{j} r (h, g) + gg_{i} hg_{j} r (hx_{j}, gx_{i})\\
&=& (gx_{i})_{(1)} (hx_{j})_{(1)} r \big( (hx_{j})_{(2)}, (gx_{i})_{(2)} \big),
\end{eqnarray*}
where  for the third equality  we use the fact that $r (hx_{j},
gx_{i}) (1 - g_{i}g_{j}) = 0$. Thus, $r$ is a co-quasitriangular
structure on $H$ which restricts on $\Gamma$ to $r_{0}$ and on $V$
to $r_{1}$. The uniqueness of $r$ follows from Lemma~\ref{uniqueness}.
This completes the proof of Theorem~\ref{Thm 1}.

Let $\C(\Gamma,\, r_0,\, V,\, r_1)$ denote the braided tensor category constructed above.

\begin{example}
\label{E(n)}
Take $\Gamma=\mathbb{Z}/2\mathbb{Z}$ and  let $r_0$ be the unique non-trivial bicharacter of $\mathbb{Z}/2\mathbb{Z}$.
Let $V$ be a multiple of the non-identity simple object of $\Corep(k[\mathbb{Z}/2\mathbb{Z}])$. 
Then $V$ belongs to  $\Z_{sym}(\C(\mathbb{Z}/2\mathbb{Z}],\, r_0))$, so the Nichols Hopf algebra \cite{Ni78}
\begin{equation}
\label{E(V)}
E(V) :=\mathfrak{B}(V) \# k[\mathbb{Z}/2\mathbb{Z}]
\end{equation}
admits co-quasitriangular structures. According to Theorem~\ref{Thm 1} such structures are in bijection
with bilinear forms $r_1: V\ot V\to k$ or, equivalently, with $n$-by-$n$ square matrices, where $n=\dim_k(V)$.
This fact was first established in \cite{PvO99}.  
\end{example} 

\section{The symmetric center of $\C(\Gamma,\, r_0,\, V,\, r_1)$}
\label{Sect:Zsym}

The symmetric center $\Z_{sym}(\C)$ of a braided tensor category $\C$ was defined
in Section~\ref{prelim:symcenter}.

Let $\Gamma,\, r_0,\, V,\, r_1$ be as in Theorem~\ref{Thm 1}.  Let $H =\mathfrak{B}(V) \# k[\Gamma]$
and $r: H\ot H \to k$ be the corresponding Hopf algebra and $r$-form. 

We have
\[
\Z_{sym}(\C(\Gamma,\, r_0, \, V,\, r_1)) \cong  \Corep(H_{sym},\, r|_{H_{sym} \ot H_{sym}}),
\]
where $H_{sym}$ is the Hopf subalgebra of $H$ defined by \eqref{Hsym formula}.

Let $b: \Gamma\times \Gamma \to k^\times$
be the symmetric bicharacter given by
\[
b(g,\,h) = r(g,\,h)r(h,\,g), \qquad g,h\in \Gamma.
\]

Let $\Gamma^{\perp}$ and $V^{\perp}$ denote the radicals of   $b$ and $(r_1)_{alt}$, respectively,  i.e.,
\begin{eqnarray*}
\Gamma^{\perp} & =& \{ g \in \Gamma \mid b (g, h) = 1 \textnormal{ for all } h \in \Gamma\},\\
V^{\perp} & = &\{ v \in V \mid (r_1)_{alt}(v, w) = 0 \textnormal{ for all } w \in V\}.
\end{eqnarray*}

\begin{remark}
If $V\neq 0$ then $\Gamma^\perp \neq 0$ since $V\in \C(\Gamma^\perp,\, r_0|_{\Gamma^\perp\times \Gamma^\perp })$.
\end{remark}

\begin{lemma}
\label{centralizer lemma}
$H_{sym}$ is generated as an algebra by $\Gamma^{\perp}$ and $V^{\perp}$.
\end{lemma}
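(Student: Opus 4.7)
The plan is to identify $H_{sym}$ with the Hopf subalgebra $K:=\mathfrak{B}(V^\perp)\#k[\Gamma^\perp]$ of $H$. First one verifies that $K$ is well-defined: $(r_1)_{alt}\colon V\otimes V\to k$ is a morphism in $\YD$, being the alternating part of the YD-morphism $r_1$ combined with the transposition $\tau_{V,V}$ (itself a YD-morphism by \eqref{transposition}). Hence its ``radical'' $V^\perp=\{v\in V:(r_1)_{alt}(v,w)=0\;\forall w\}$ is a sub-YD-module of $V$; since $V\in\Z_{sym}(\C(\Gamma,r_0))$ forces every $g_i\in\Gamma^\perp$, $V^\perp$ is supported on $\Gamma^\perp$ and hence an object of ${}^{\Gamma^\perp}_{\Gamma^\perp}\YD$. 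Thus $K$ is a sub-Hopf-algebra of $H$, generated as an algebra by $\Gamma^\perp\cup V^\perp$.

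For the inclusion $K\subseteq H_{sym}$, I use that $H_{sym}$ is itself a subalgebra, so only the generators need to be checked in \eqref{Hsym formula}. For $x=g\in\Gamma$, the vanishing rules $r(g,hx_Q)=0=r(hx_Q,g)$ for $Q\neq\emptyset$ (derived in the proof of Theorem~\ref{Thm 1}) collapse the left-hand side to $g\cdot b(g,h)$ when $y=h\in\Gamma$ and to $0$ otherwise, so $g\in H_{sym}$ iff $g\in\Gamma^\perp$. For $x=v\in V$, by linearity I reduce to a basis element $v=x_i$. Expanding $\Delta^2(x_i)=g_i\otimes g_i\otimes x_i+g_i\otimes x_i\otimes 1+x_i\otimes 1\otimes 1$, all contributions to \eqref{Hsym formula} vanish when $y$ has Nichols degree different from $1$, while for $y=hx_j$ the condition reduces to $\chi_i(g_j)r_1(x_j,x_i)+r_1(x_i,x_j)=0$; by Lemma~\ref{symmetry} this scalar is a nonzero multiple of $(r_1)_{alt}(x_i,x_j)$, so the condition is exactly $x_i\in V^\perp$.

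For the reverse inclusion $H_{sym}\subseteq K$, I exploit the right $\Gamma$-grading on $H$ coming from the Hopf quotient $\pi\colon H\to k[\Gamma]$, under which $gx_P$ has degree $g$. Since $H_{sym}$ is a sub-coalgebra, $\Delta(H_{sym})\subseteq H_{sym}\otimes H_{sym}$ and $H_{sym}$ is a graded subcomodule, $H_{sym}=\bigoplus_g(H_{sym})^g$. Fix nonzero homogeneous $x=\sum_P c_P\,gx_P\in(H_{sym})^g$. Testing \eqref{Hsym formula} against $y\in\Gamma$ (where only the summand of $\Delta^2(gx_P)$ concentrating all Nichols letters in the first tensor factor survives) yields $b(g,y)\cdot x=x$, forcing $g\in\Gamma^\perp$. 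Testing against $y=x_j$ and expanding
\[
\Delta^2(gx_P)=\sum_{P=A\sqcup B\sqcup C}\psi_{A,B,C}\,gg_{B\cup C}x_A\otimes gg_Cx_B\otimes gx_C
\]
(with the $\psi_{A,B,C}$ from Remark~\ref{xP remark}), the $r$-support constraints select only the configurations where either the middle slot is empty and the third slot carries a single Nichols letter, or vice versa. Regrouping by the emerging basis element $gg_kx_Q$ with $Q:=P\setminus\{k\}$, the condition becomes
\[
c_{Q\cup\{k\}}\,\psi(Q\cup\{k\},\{k\})\bigl[\chi_k(g_j)r_1(x_j,x_k)+r_1(x_k,x_j)\bigr]=0
\]
for every $j$ and every $(Q,k)$ with $k\notin Q$. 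Since $\psi(Q\cup\{k\},\{k\})\neq 0$ and, by Lemma~\ref{symmetry}, the bracketed scalar is a nonzero multiple of $(r_1)_{alt}(x_j,x_k)$, the nonvanishing of $c_{Q\cup\{k\}}$ forces $x_k\in V^\perp$. Hence every monomial $gx_P$ appearing in $x$ satisfies $g\in\Gamma^\perp$ and $x_k\in V^\perp$ for every $k\in P$, so $x\in K$.

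The main obstacle is the combinatorial bookkeeping in the second inclusion: carefully tracking which ordered splittings $P=A\sqcup B\sqcup C$ in $\Delta^2(gx_P)$ pair nontrivially with each summand of $\Delta(x_j)=g_j\otimes x_j+x_j\otimes 1$ under the $r$-form. The three features that keep this manageable are the strong vanishing rule \eqref{gPgQ} forcing $r(x_P,x_Q)=0$ unless $g_Pg_Q=1$; the fact that $\Delta(x_j)$ has only two summands, one group-$x$ and one $x$-group, so each contribution of $\Delta^2(gx_P)$ can pair nontrivially with at most one; and Lemma~\ref{symmetry}, which uniformly realizes $\chi_k(g_j)r_1(x_j,x_k)+r_1(x_k,x_j)$ as a nonzero multiple of $(r_1)_{alt}(x_j,x_k)$ independent of whether $\chi_k(g_j)=-1$.
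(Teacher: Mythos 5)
Your strategy for the hard inclusion $H_{sym}\subseteq K$ (where $K:=\mathfrak{B}(V^\perp)\#k[\Gamma^\perp]$) is genuinely different from the paper's: the paper first applies Angiono's generation theorem to the pointed Hopf subalgebra $H_{sym}$, so that it only has to decide which group-likes and which skew-primitives lie in $H_{sym}$, whereas you attempt a direct computation on the monomial basis $\{gx_P\}$. The local computations you share with the paper (the collapse of \eqref{Hsym formula} against $y\in\Gamma$ and against $y=hx_j$, and the identification of $\chi_k(g_j)r_1(x_j,x_k)+r_1(x_k,x_j)$ with a nonzero multiple of $(r_1)_{alt}(x_k,x_j)$ via Lemma~\ref{symmetry}) are correct and match the paper's.

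There is, however, a genuine gap in your regrouping step. Distinct indices $k\neq k'$ with $g_k=g_{k'}$ and $k,k'\notin Q$ produce the \emph{same} basis monomial $gg_kx_Q=gg_{k'}x_Q$, so setting its total coefficient to zero only yields, for each $\gamma\in\Gamma$ and each $j$, the relation $\sum_{k\notin Q,\ g_k=\gamma}c_{Q\cup\{k\}}\,\psi(Q\cup\{k\},\{k\})\,(r_1)_{alt}(x_k,x_j)=0$, i.e.\ that certain linear combinations of the $x_k$ lie in $V^\perp$ --- not that each $x_k$ with $c_{Q\cup\{k\}}\neq0$ does. Your concluding sentence is in fact false as stated: if $(g_1,\chi_1)=(g_2,\chi_2)$ and $x_1+x_2\in V^\perp$ while $x_1,x_2\notin V^\perp$, then $g(x_1+x_2)\in K\subseteq H_{sym}$, yet the monomial $gx_1$ occurs in it with $x_1\notin V^\perp$; so ``every monomial appearing has all its letters in $V^\perp$'' neither follows from the computation nor correctly describes $K$. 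The same conflation occurs, more mildly, in your forward inclusion (``by linearity I reduce to a basis element $v=x_i$''): a general element of $V^\perp$ need not be a combination of basis vectors individually lying in $V^\perp$. Both defects are repairable: since $(r_1)_{alt}$ is a morphism in $\YD$ (as you note), $V^\perp$ is a Yetter--Drinfeld submodule of the semisimple object $V$, so one may re-choose the basis $x_1,\dots,x_n$ adapted to $V^\perp$, after which your regrouping is valid and your description of $K$ becomes correct. This is exactly the point where the paper is careful: it works with the general skew-primitive $y=\sum_{j:\,g_j=g_i}a_jx_j$ and concludes $y\in H_{sym}$ if and only if $y\in V^\perp$ as a statement about the combination, not about the individual basis vectors.
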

\begin{proof}
Since $H_{sym}$ is a Hopf subalgebra
of a pointed Hopf algebra with abelian coradical, it is pointed
with abelian coradical. By the result of Angiono \cite{An13}, $H_{sym}$ is generated by its
group-like and skew-primitive elements.

It is easy to see that an element $g \in \Gamma$ is in $H_{sym}$
if and only if $r (g, h) r (h, g) = 1$, for all $h \in \Gamma$.
Thus, the set of group-like elements of $H_{sym}$ is
$\Gamma^{\perp}$.

Let now $g \in \Gamma^{\perp}$ and let $x$ be a $(g, 1)$-primitive
element. If $g \notin \{ g_{i} \mid i = 1, \dots, n \}$ then $x$
is a scalar multiple of $1 - g$, so it is contained in $H_{sym}$.
If $g = g_{i}$ for some $i \in \{1, \dots, n\}$ then $x = a (1 -
g_{i}) + \sum_{j: g_{j} = g_{i}} a_{j}x_{j}$, for some $a$, $a_{j}
\in k$. Let $y = \sum_{j: g_{j} = g_{i}} a_{j}x_{j}$. We claim
that $x \in H_{sym}$ if and only if $y \in V^{\perp}$.
It is clear that $x \in H_{sym}$ if and only if $y \in
H_{sym}$. Next,
$$
y_{(1)} r (y_{(2)}, z_{(1)}) r (z_{(2)}, y_{(3)}) = \sum_{j: g_{j}
= g_{i}} a_{j} \Big( r (g_{j}, z_{(1)}) r (z_{(2)}, x_{j}) + r
(x_{j}, z) \Big) g_{i} + \varepsilon(z) y,\qquad z\in H, 
$$
so $y \in H_{sym}$ if and only if
\begin{equation} \label{condition}
\sum_{j: g_{j} = g_{i}} a_{j} \Big( r (g_{j}, z_{(1)}) r (z_{(2)},
x_{j}) + r (x_{j}, z) \Big) = 0
\end{equation}
for all $z \in \{hx_{l} \mid h \in \Gamma, l = 1, \dots, n\}$. For
$z = hx_{l}$, the left-hand side of \eqref{condition} becomes
\begin{align*}
\textnormal{LHS}\eqref{condition} & = \sum_{j: g_{j} = g_{i}}
a_{j} \Big( r (g_{j}, hg_{l}) r (hx_{l}, x_{j}) + r (x_{j}, hx_{l}) \Big) \\
& = \sum_{j: g_{j} = g_{i}} a_{j} \Big( r (g_{j}, g_{l}) r (x_{l},
x_{j}) + r (x_{j}, x_{l}) \Big)\\
& = r (g_{l}, g_{i}) r (x_{l}, y) + r (y, x_{l})\\
& = (r + r \circ c_{V, V}) (y, x_{l})  \\
& =  (r - r \circ \tau_{V, V}) (y, x_{l}) = 2 (r_1)_{alt}(y, x_{l}) ,
\end{align*}
where   we used the fact that 
$r|_{V\ot V}$ is a morphism in $\YD$ and Lemma~\ref{symmetry}.
Thus, $y \in H_{sym}$ if and only if $y
\in V^{\perp}$. It follows that  non-trivial skew-primitive elements of $H_{sym}$
generate  $V^\perp$, so the claim follows.
\end{proof}

\begin{corollary}
$H_{sym} = \mathfrak{B} (V^\perp) \# k[\Gamma^{\perp}]$ and
\[
\Z_{sym}(\C(\Gamma,\, r_0, \, V,\, r_1))\cong 
 \C(\Gamma^\perp,\, r_0|_{\Gamma^\perp\times \Gamma^\perp},\, V^\perp,\, r_1|_{V^\perp\ot V^\perp}).
\]
\end{corollary}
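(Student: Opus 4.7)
The plan is to derive the corollary directly from Lemma~\ref{centralizer lemma} together with the classification Theorem~\ref{Thm 1}. By the lemma, $H_{sym}$ is generated as an algebra by its group-likes $\Gamma^\perp$ and by $V^\perp$, so the first task is to identify the subalgebra of $H$ they generate with the bosonization $\mathfrak{B}(V^\perp)\#k[\Gamma^\perp]$. For this I would first verify that $V^\perp$ is naturally an object in $^{\Gamma^\perp}_{\Gamma^\perp}\mathcal{YD}$: the $\Gamma$-action preserves $V^\perp$ (since the characters $\chi_i$ restrict to $\Gamma^\perp$), and the $\Gamma$-coaction lands in $\Gamma^\perp$ by the proof of Lemma~\ref{centralizer lemma}, where the non-trivial skew-primitive elements of $H_{sym}$ are forced to have support in $\Gamma^\perp$.

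Next I would check that $V^\perp$ is a quantum linear space of symmetric type in its own right: the braiding $c_{V^\perp,V^\perp}$ is the restriction of $c_{V,V}$, hence still squares to the identity, and the pairings $\chi_i(g_i)=-1$ are inherited. Consequently, inside $H$ the generators in $\Gamma^\perp$ and $V^\perp$ satisfy exactly the defining relations of $\mathfrak{B}(V^\perp)\#k[\Gamma^\perp]$, and the PBW-style basis described in Remark~\ref{xP remark} shows that no further relations appear. This yields the first identification $H_{sym} = \mathfrak{B}(V^\perp)\#k[\Gamma^\perp]$.

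For the second claim, the $r$-form $r$ on $H$ restricts to a co-quasitriangular structure $r|_{H_{sym}\ot H_{sym}}$ on $H_{sym}$, so
\[
\Z_{sym}(\C(\Gamma,\,r_0,\,V,\,r_1)) \cong \Corep(H_{sym},\,r|_{H_{sym}\ot H_{sym}}),
\]
which is again a pointed braided tensor category with fiber functor. Applying Theorem~\ref{Thm 1} to $(H_{sym},\,r|_{H_{sym}\ot H_{sym}})$, this category is determined by the datum obtained by restricting $r$ to $\Gamma^\perp\times\Gamma^\perp$ and to $V^\perp\ot V^\perp$, yielding the desired equivalence with $\C(\Gamma^\perp,\,r_0|_{\Gamma^\perp\times\Gamma^\perp},\,V^\perp,\,r_1|_{V^\perp\ot V^\perp})$.

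The main obstacle, and the step that requires genuine care, is justifying that $V^\perp$ is supported on $\Gamma^\perp$, which is what makes the bosonization interpretation clean. The key observation is that $r_1$ vanishes on $V_g\ot V_h$ whenever $gh\neq 1$ (by equation~\eqref{gPgQ}), so the alternating form $(r_1)_{alt}$ decomposes along the hyperbolic pairs $V_g\oplus V_{g^{-1}}$, and the proof of Lemma~\ref{centralizer lemma} (via the identity $(r+r\circ c_{V,V})(y,x_l)=2(r_1)_{alt}(y,x_l)$) shows that the relevant $g$ must lie in $\Gamma^\perp$. Once this is in hand, the rest of the argument is a routine application of Theorem~\ref{Thm 1} to the Hopf subalgebra $H_{sym}$.
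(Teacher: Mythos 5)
Your argument is correct and takes essentially the same route as the paper, which states this corollary as an immediate consequence of Lemma~\ref{centralizer lemma} (generation of $H_{sym}$ by $\Gamma^\perp$ and $V^\perp$, identification of the resulting subalgebra with the bosonization, and restriction of the $r$-form). The one step you overcomplicate is the support of $V^\perp$: since $V\in \Z_{sym}(\C(\Gamma,\,r_0))$ one has $r_0(g_i,h)\,r_0(h,g_i)=\chi_i(h)\chi_i(h)^{-1}=1$ for all $h$, so every $g_i$ already lies in $\Gamma^\perp$ (this is exactly the Remark preceding the lemma), and no appeal to the hyperbolic decomposition of $(r_1)_{alt}$ is needed.
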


\begin{corollary}
\label{properties}
\begin{enumerate}
\item[(i)] $\C(\Gamma,\, r_0, \, V,\, r_1)$ is symmetric if and only if $\C(\Gamma,\, r_0)$ is symmetric
and $r_1$ is symmetric.
\item[(ii)] $\Z_{sym}(\C(\Gamma,\, r_0, \, V,\, r_1))$ is semisimple if and only if $(r_1)_{alt}:V\ot V \to k$ is non-degenerate.
\item[(iii)] $\C(\Gamma,\, r_0, \, V,\, r_1)$ is factorizable if and only if $\C(\Gamma,\, r_0)$  is factorizable and $V=0$.
\end{enumerate}
\end{corollary}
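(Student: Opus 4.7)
The strategy is to read off all three statements from the explicit description
\[
\Z_{sym}(\C(\Gamma,\, r_0,\, V,\, r_1)) \;\cong\; \C(\Gamma^\perp,\, r_0|_{\Gamma^\perp\times\Gamma^\perp},\, V^\perp,\, r_1|_{V^\perp\ot V^\perp})
\]
established in the preceding corollary. In each case the work is to translate a condition on the right-hand side (trivial radical, maximal radical, or triviality of the whole category) into the statement on the left.

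For part (i), $\C(\Gamma,\, r_0,\, V,\, r_1)$ is symmetric iff $H_{sym}=H$, i.e., iff $\Gamma^\perp=\Gamma$ and $V^\perp=V$. The first equality says $b(g,h)=1$ for all $g,h\in\Gamma$, which is precisely the condition that $\C(\Gamma,\, r_0)$ be symmetric (by the Joyal--Street description of braidings on pointed fusion categories recalled in Section~\ref{prelim: pointed fusion}). The second equality says $(r_1)_{alt}$ vanishes identically, which by the canonical decomposition \eqref{sym and alt} is equivalent to $r_1=(r_1)_{sym}$, i.e., $r_1\circ\tau_{V,V}=r_1$.

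For part (ii), the category $\C(\Gamma^\perp,\, r_0|_{\Gamma^\perp\times\Gamma^\perp},\, V^\perp,\, r_1|_{V^\perp\ot V^\perp})$ is the corepresentation category of the bosonization $\mathfrak{B}(V^\perp)\#k[\Gamma^\perp]$. This Hopf algebra is semisimple iff its Jacobson radical is zero, which (because $k[\Gamma^\perp]$ is already semisimple) happens iff the Nichols algebra factor is trivial, i.e., iff $V^\perp=0$. By definition of $V^\perp$ as the radical of $(r_1)_{alt}$, this is the same as asking $(r_1)_{alt}$ to be non-degenerate on $V$.

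For part (iii), factorizability of $\C(\Gamma,\, r_0,\, V,\, r_1)$ is the condition that $\Z_{sym}$ be trivial, i.e., $\Gamma^\perp=\{1\}$ and $V^\perp=0$. The first of these says exactly that the symmetric bicharacter $b$ is non-degenerate on $\Gamma$, which is factorizability of $\C(\Gamma,\, r_0)$. To conclude that additionally $V=0$ I invoke the remark preceding Lemma~\ref{centralizer lemma}: if $V\neq 0$ then the support of $V$ lies in $\Gamma^\perp$, forcing $\Gamma^\perp\neq\{1\}$. Hence factorizability forces $V=0$, and conversely $V=0$ together with factorizability of $\C(\Gamma,\, r_0)$ makes both radicals trivial. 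No step here is genuinely difficult; the only point requiring a little care is the direction $\Gamma^\perp=\{1\}\Rightarrow V=0$ in (iii), which is where the remark about the support of $V$ plays the essential role.
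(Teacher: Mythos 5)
Your proposal is correct and follows exactly the route the paper intends: all three parts are read off from the identification $H_{sym}=\mathfrak{B}(V^\perp)\#k[\Gamma^\perp]$ of the preceding corollary, with the remark that $\mathrm{supp}(V)\subseteq\Gamma^\perp$ supplying the one non-obvious implication in (iii). The only nitpick is in (ii): since the category is $\Corep(H_{sym})$, semisimplicity is cosemisimplicity of $H_{sym}$ (equivalently, since $H_{sym}$ is pointed, the absence of nontrivial skew-primitives), though in characteristic $0$ this coincides with semisimplicity by Larson--Radford, so your argument stands.
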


\begin{example}
\label{cqt on E(V)}
Let $E(V)$ be the Hopf algebra from Example~\ref{E(n)} and let $r_1:V\ot V\to k$ be a bilinear form.
In this case  $c_{V,V} =-\tau_{V,V}$, 
so Corollary~\ref{properties}(i) says that the co-quasitriangular structure determined 
by $r_1$ is symmetric if an only if $r_1$ is symmetric (in the usual linear algebra sense). This was proved in  \cite{CC04}.
\end{example}

\section{Ribbon structures on  $\C(\Gamma,\, r_0,\, V,\, r_1)$}
\label{Sect:ribbon}

Ribbon structures on braided tensor categories and ribbon elements of coquasitriangular Hopf algebras
were defined  in Section~\ref{prelim: ribbon}. We want  to classify ribbon structures of $\C (\Gamma,\, r_0,\, V,\, r_1)$. 

\begin{lemma}
\label{existence of morphism}
Let $A$ be an abelian group and let $a_{1}, a_{2}, \dots, a_{n}$
be elements of $A$. Then there exists a group homomorphism $\gamma
: A \to \{ \pm 1\}$ such that $\gamma (a_{i}) = -1$, for all $i =
1, \dots, n$ if and only if there are no relations in $A$ of the
form $a_{i_{1}}a_{i_{2}} \cdots a_{i_{k}} = x^{2}$, with $k$ odd.
\end{lemma}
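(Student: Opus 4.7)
The plan is to reduce the problem to linear algebra over $\mathbb{F}_2$ by passing to the quotient of $A$ by its subgroup of squares.

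The necessity is immediate. If $\gamma:A\to\{\pm 1\}$ satisfies $\gamma(a_i)=-1$ for all $i$, and $a_{i_1}a_{i_2}\cdots a_{i_k}=x^2$ holds in $A$, then applying $\gamma$ yields
\[
(-1)^k = \gamma(a_{i_1})\gamma(a_{i_2})\cdots\gamma(a_{i_k}) = \gamma(x)^2 = 1,
\]
so $k$ must be even.

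For sufficiency, let $A^{(2)}=\{a^2 \mid a\in A\}\subseteq A$ denote the subgroup of squares and set $\bar A:=A/A^{(2)}$, which is an elementary abelian $2$-group and thus an $\mathbb{F}_2$-vector space. Homomorphisms $\gamma:A\to\{\pm 1\}$ vanish on $A^{(2)}$ and hence correspond bijectively to $\mathbb{F}_2$-linear functionals $\lambda:\bar A\to\mathbb{F}_2$ via $\gamma(a)=(-1)^{\lambda(\bar a)}$; under this correspondence the condition $\gamma(a_i)=-1$ becomes $\lambda(\bar a_i)=1$ for $i=1,\dots,n$. Let $W\subseteq \bar A$ be the $\mathbb{F}_2$-subspace spanned by $\bar a_1,\dots,\bar a_n$, and attempt to define $\lambda_0:W\to\mathbb{F}_2$ by $\lambda_0(\bar a_i)=1$ and extending $\mathbb{F}_2$-linearly.

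The only step requiring real work is checking that $\lambda_0$ is well defined, and this is exactly where the hypothesis is used. A linear relation $\sum_{i\in S}\bar a_i=0$ in $\bar A$ with $S\subseteq\{1,\dots,n\}$ is precisely the statement that $\prod_{i\in S}a_i\in A^{(2)}$, i.e.\ that $\prod_{i\in S}a_i=x^2$ for some $x\in A$; by hypothesis this forces $|S|$ to be even, equivalently $\sum_{i\in S}\lambda_0(\bar a_i)=|S|=0$ in $\mathbb{F}_2$. (Repeated indices in a relation $a_{i_1}\cdots a_{i_k}=x^2$ reduce modulo $2$ to a subset relation of this form with the same parity of length, so the hypothesis as stated is exactly what is needed.) Once $\lambda_0$ is well defined on $W$, extending it to an $\mathbb{F}_2$-linear functional $\lambda$ on all of $\bar A$ is automatic, since over a field every subspace admits a complement. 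Setting $\gamma(a):=(-1)^{\lambda(\bar a)}$ then produces the desired homomorphism.
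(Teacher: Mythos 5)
Your proof is correct and follows essentially the same route as the paper: both pass to the elementary abelian $2$-group $A/A^{2}$, observe that the hypothesis forces every $\mathbb{F}_2$-linear relation among the images $\bar a_i$ to have even length, and conclude that the assignment $\bar a_i \mapsto 1$ extends to a well-defined linear functional. You merely spell out the well-definedness and extension steps in slightly more detail than the paper does.
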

\begin{proof}
Homomorphisms $A \to \{ \pm 1\}$ are in bijection with homomorphisms $A/A^2 \to \{ \pm 1\}$.
Let $\bar{a}_1,\bar{a}_2\dots,\bar{a}_n$ be images of  $a_{1}, a_{2}, \dots, a_{n}$ in $A/A^2$.
If there are no relations in $A$ of the form
$a_{i_{1}}a_{i_{2}} \cdots a_{i_{k}} = x^{2}$ with $k$ odd then all relations
in $A/A^2$ (viewed as a $\mathbb{Z}/2\mathbb{Z}$-vector space)  
are of the form $\bar{a}_{i_{1}} + \bar{a}_{i_{2}} \cdots + \bar{a}_{i_{l}} = 0$ with $l$ even. 
Therefore, there is a well-defined homomorphism $A/A^2\to \{ \pm 1\}$ sending each $a_i$ to $-1$. 
The converse implication is trivial.
\end{proof}

\begin{proposition}
The set of ribbon structures on $\C (\Gamma,\, r_{0},\, V,\, r_{1})$ is non-empty
and is in bijection with 
the set of group homomorphisms $\gamma : \Gamma \to \{\pm
1\}$ such that $\gamma (g_{i}) = -1$, for all $i = 1, \dots, n$.
\end{proposition}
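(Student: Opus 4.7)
The plan is to apply the classification of ribbon elements of $(H,r)$ recalled in Section~\ref{prelim: ribbon} to $H=\mathfrak{B}(V)\#k[\Gamma]$, and then prove non-emptiness via Lemma~\ref{existence of morphism}. Since $H$ is generated by $\Gamma$ and the nilpotent skew-primitives $x_i$, every algebra map $H\to k$ must vanish on the $x_i$, so the group-likes of $H^*$ are precisely the characters $\gamma\in\widehat{\Gamma}$.

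First I would compute $S^2$ on $H$. On $\Gamma$ it is the identity. For the $x_i$, the formula $S(x_i)=-g_i^{-1}x_i$ together with the anticommutation $g_ix_i=-x_ig_i$ (from $\chi_i(g_i)=-1$) gives $S^2(x_i)=g_i^{-1}x_ig_i=\chi_i(g_i)^{-1}x_i=-x_i$. Next, using $r(g,x_i)=r(x_i,g)=0$ from Section~\ref{Class of CQT}, I would compute $\eta(g)=r_0(g,g)^{-1}$, $\eta^{-1}(g)=r_0(g,g)$ for $g\in\Gamma$, and $\eta(x_i)=\eta^{-1}(x_i)=(\eta\circ S)(x_i)=0$. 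Hence $(\eta\circ S)*\eta^{-1}=\varepsilon$, and the condition $\gamma^2=(\eta\circ S)*\eta^{-1}$ forces $\gamma$ to take values in $\{\pm 1\}$. The centrality condition $\gamma^{-1}*p*\gamma=S^2_{H^*}(p)$ evaluated on $x_i$ (using $\Delta(x_i)=g_i\ot x_i+x_i\ot 1$ and $\gamma(x_i)=0$) reduces to $\gamma(g_i)^{-1}p(x_i)=-p(x_i)$ for all $p\in H^*$, i.e., $\gamma(g_i)=-1$ for every $i$. Conversely, any such $\gamma$ clearly satisfies both conditions and gives a ribbon element $\gamma*\eta$ of $(H,r)$, establishing the claimed bijection.

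For non-emptiness, by Lemma~\ref{existence of morphism} it suffices to rule out relations $g_{i_1}\cdots g_{i_k}=x^2$ in $\Gamma$ with $k$ odd. Suppose for contradiction such a relation holds. Since $V\in\Z_{sym}(\C(\Gamma,r_0))_-$, each $g_{i_j}$ lies in the radical $\Gamma^\perp$ of the symmetric bicharacter $b_0(g,h)=r_0(g,h)r_0(h,g)$, and $r_0(g_{i_j},g_{i_j})=-1$. I would then compute $r_0(x^2,x^2)$ in two ways. First, substituting the relation and using bilinearity,
\[
r_0(x^2,x^2)=\prod_{j}r_0(g_{i_j},g_{i_j})\cdot \prod_{j<l}b_0(g_{i_j},g_{i_l})=(-1)^k,
\]
where the second product is $1$ because $g_{i_j}\in\Gamma^\perp$. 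On the other hand, since $r_0(x,g_{i_j})r_0(g_{i_j},x)=b_0(x,g_{i_j})=1$, one has
\[
r_0(x,x)^2=r_0(x,x^2)=\prod_j r_0(x,g_{i_j})=\prod_j r_0(g_{i_j},x)^{-1}=r_0(x^2,x)^{-1}=r_0(x,x)^{-2},
\]
so $r_0(x,x)^4=1$, giving $r_0(x^2,x^2)=r_0(x,x)^4=1$. Combining yields $(-1)^k=1$, contradicting the assumption that $k$ is odd.

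The main subtlety lies in correctly accounting for the fact that $S^2\neq\id_H$: that $S^2$ acts as $-1$ on $V$ is precisely what forces $\gamma(g_i)=-1$ in the centrality condition, rather than $+1$ (which would give the wrong parameter set). The non-emptiness step itself is a compact bicharacter identity that uses the full strength of the hypothesis $V\in\Z_{sym}(\C(\Gamma,r_0))_-$, namely both that the $g_i$ lie in $\Gamma^\perp$ and that $r_0(g_i,g_i)=-1$.
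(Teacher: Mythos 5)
Your proposal is correct and follows essentially the same route as the paper: reduce to ribbon elements, observe that characters of $H$ vanish on the nilpotent $x_i$, derive $\gamma(\Gamma)\subseteq\{\pm 1\}$ from $\gamma^{2}=(\eta\circ S)*\eta^{-1}$ and $\gamma(g_i)=-1$ from the $S^2$-condition (the paper phrases the latter by dualizing to $S_H^2=\gamma^{-1}*\id_H*\gamma$ and comparing algebra maps on generators, which is also what justifies your ``clearly satisfies both conditions'' in the converse direction), and then settle non-emptiness via Lemma~\ref{existence of morphism} and the same bicharacter computation showing $r_0(x,x)^4=1$ while $r_0(x^2,x^2)=(-1)^k$.
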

\begin{proof}
Let $H = \mathfrak{B} (V) \# k[\Gamma]$. As explained in Section~\ref{prelim: ribbon}, 
ribbon structures on $\C (\Gamma,\, r_{0},\,
V,\, r_{1})$ are in bijection with group-like elements $\gamma \in
G(H^{*})$ satisfying $\gamma^{2} = (\eta \circ S) * \eta^{-1}$ and
$S^{2}_{H^{*}} (p) = \gamma^{-1}
* p * \gamma$, for all $p \in H^{*}$. Let $\gamma$ be such an
element. We have
$$
\gamma(g)^{2} = \gamma^{2} (g) = (\eta \circ S) * \eta^{-1} (g) =
r_{0} (g^{-1}, g) r_{0} (g, g) = 1
$$
for all $g \in \Gamma$, so $\gamma (\Gamma) \subseteq \{\pm 1\}$.
Now $S^{2}_{H^{*}} (p) = \gamma^{-1} * p * \gamma$ for all $p \in
H^{*}$, if and only if $S_{H}^{2} = \gamma^{-1} * \id_{H} *
\gamma$. Since both maps are algebra maps, they are equal if and
only if they agree on algebra generators. We have
\begin{alignat*}{4}
S_{H}^{2} (g)     &= g,          & \quad (\gamma^{-1} * \id_{H} * \gamma) (g)     &= g,\\
S_{H}^{2} (x_{i}) &= -x_{i},     & \quad (\gamma^{-1} * \id_{H} *
\gamma) (x_{i}) &=  \gamma^{-1} (g_{i}) x_{i}
\end{alignat*}
for all $g \in \Gamma$ and $i = 1, \dots, n$. Thus, $S_{H}^{2} =
\gamma^{-1} * \id_{H} * \gamma$ if and only if $\gamma (g_{i}) =
-1$ for all $i$.

It remains to show  that there always exists a homomorphism $\gamma :
\Gamma \to \{\pm 1\}$ such that $\gamma (g_{i}) = -1$, for all $i
= 1, \dots, n$. Since $k^{\times}$ is an injective
$\mathbb{Z}$-module, it is enough to show that there is such a homomorphism
on the subgroup $\Gamma_0 =\langle g_1,\dots g_n\rangle \subset \Gamma$.
Using Lemma \ref{existence of morphism}, we
have to show that there are no relations in $\Gamma_0$ of the form
$g_{i_{1}} g_{i_{2}} \cdots g_{i_{k}} = x^{2}$  with $k$ odd.
If $x = g_{i_{1}}^{e_{1}} g_{i_{2}}^{e_{2}} \cdots
g_{i_{t}}^{e_{t}}$ is an element of $\Gamma_0$ then
$$
r_{0} (x,\, x) = \prod_{r = 1}^{t} r_{0} (g_{i_{r}},
g_{i_{r}})^{e_{r}^{2}} \prod_{1 \leq r < s \leq t} \big( r_{0}
(g_{i_{r}}, g_{i_{s}}) r_{0} (g_{i_{s}}, g_{i_{r}})
\big)^{e_{r}e_{s}} = (-1)^{\sum_{r=1}^t\,e_r^2}. 
$$
In particular, $r_0(x,\,x)^2 =1$ for all $x\in \Gamma_0$. 
On the other hand, if $g_{i_{1}} g_{i_{2}} \cdots g_{i_{k}} = x^{2}$ with $k$
odd, then $r_0(x,\,x)^4 =r_{0} (x^{2}, x^{2}) = (-1)^{k} = -1$, which is a contradiction.
\end{proof}

\section{Metric quadruples (Proof of Theorem~\ref{Thm 2})}
\label{Metric 4sect}

It is well known that the $1$-categorical truncation of the $2$-category of pointed braided fusion
categories is equivalent to the $2$-category of {\em pre-metric
groups}. The objects of the former are pointed braided fusion categories and morphisms
are natural isomorphism classes of braided tensor functors.
The objects of the latter are pairs $(A,\, q)$, where $A$
is an abelian group and $q: A\to k^\times$ is a quadratic form, and
morphisms are orthogonal homomorphisms (see \cite{JS93} or \cite[Section 8.4]{EGNO15}).

The goal of this section is to extend this result to  tensor
categories that are not necessarily semisimple.

\subsection{The category of metric quadruples}

Recall that a {\em groupoid} is a category in which all morphisms
are isomorphisms.

Let $\mathcal{P}$ denote  a groupoid  whose objects are pointed
braided tensor categories admitting a fiber functor and morphisms
are natural isomorphism classes of equivalences of braided tensor categories. Thus, objects of
$\mathcal{P}$ can be identified with  the co-representation
categories of co-quasitriangular pointed Hopf algebras.

Define a groupoid $\mathcal{Q}$ as follows. The  objects of
$\mathcal{Q}$ are quadruples $(\Gamma,\, q,\, V,\, r)$, where
$\Gamma$ is a finite  abelian group, $q\in \Quad_d(\Gamma)$ is a
diagonalizable quadratic form on $\Gamma$,  $V$ is an object in $\Z_{sym}(\C(\Gamma,\, q))_-$,
and $r:V\ot V \to k$ is an alternating bilinear form in $\C(\Gamma,\, q)$.
The set of morphisms
\[
\Hom_{\mathcal{Q}}((\Gamma,\, q,\, V,\, r),\, (\Gamma',\, q',\, V',\, r')) 
\]
is the set of  pairs $(\alpha,\, f)$ modulo certain equivalence relation $\sim$ which we describe next. 
Namely,   $\alpha: \Gamma\to \Gamma'$ is an orthogonal
group isomorphism (i.e., such that $q'\circ \alpha=q$)
and $f: \ind_\alpha(V )\to V'$ is an isomorphism
in $\C(\Gamma', q')$ such that  $r'\circ (f \ot f) = \ind_\alpha(r)$. Here
\[
\ind_\alpha: \C(\Gamma,\, q)\to \C(\Gamma',\, q')
\]
denotes the braided tensor equivalence induced by $\alpha$. Finally,  the relation $\sim$ identifies pairs
$(\alpha,\, f)$ and $(\alpha,\, -f)$.

\begin{definition}
We will call $\mathcal{Q}$ the groupoid of {\em metric quadruples}.
\end{definition}

Define a functor
\begin{equation}
\label{F} 
F:\mathcal{Q}\to \mathcal{P}
\end{equation}
as follows.  Given a quadruple $(\Gamma,\, q,\, V,\, r)$ as above
let $r_0:\Gamma\times\Gamma\to k^\times$ be a bicharacter such
that $q(g)=r_0(g,\,g)$ for all $g\in \Gamma$.  Set
\begin{equation}
\label{CGqVr} 
F(\Gamma,\, q,\, V,\, r) =\C(\Gamma,\, r_0,\, V,\, r). 
\end{equation}
A morphism $(\alpha,\, f)$ between  $(\Gamma,\, q,\, V,\, r)$ and
$(\Gamma',\, q',\, V',\, r')$ gives rise to  an isomorphism of
Hopf algebras $\varphi_{(\alpha,f)}: \mathfrak{B}(V) \# k[\Gamma]
\xrightarrow{\sim} \mathfrak{B}(V') \# k[\Gamma']$  given by
\begin{equation}
\label{varphi}
\varphi_{(\alpha,f)}(g) =\alpha(g),\quad \varphi_{(\alpha,f)}(x) = f(x)
\end{equation}
for all $g\in \Gamma$ and $x\in V$. If $r_0': \Gamma'\times \Gamma'\to k^\times$
is a bicharacter such that $q'(g) = r_0'(g,\,g),\, g\in\Gamma'$ then the bicharacter
$r_0'\circ(\alpha\times\alpha)/r_0$ is alternating and so is equal to  $\mbox{alt}(\mu)$
some $\mu\in Z^2(\Gamma/\Gamma_0,\, k^\times)$, see \eqref{alt}. This means that  the $r$-form
$r_0'\circ(\alpha\times\alpha)$ is a twisting deformation of  $r_0$ by means of
the above $2$-cocycle $\mu\in H^2(\Gamma/\Gamma_0,\, k^\times)$. But such $\mu$ defines an
invariant $2$-cocycle on $\mathfrak{B}(V) \# k[\Gamma]$ by Proposition~\ref{Hinv computed}.
Thus, $\varphi_{(\alpha,f)}$ gives rise to a well defined braided tensor  equivalence $F(\alpha,\,f)$
between $\C(\Gamma,\, r_0,\, V,\, r)$ and $\C(\Gamma',\, r_0',\, V',\, 'r)$.

\subsection{Proof of Theorem~\ref{Thm 2}}
\label{proof section}

Our goal is to show that the functor $F$ defined in the previous Section is an equivalence. 

Let  $H := \mathfrak{B}(V) \# k[\Gamma]$ and $H' :=
\mathfrak{B}(V') \# k[\Gamma']$ with  co-quasitriangular structures on $H$ and
$H'$ defined by $(r_0,\, r_{1})$ and $(r_0',\, r'_{1})$.

\begin{lemma}
\label{lemma 1} 
Co-quasitriangular Hopf algebra isomorphisms $H
\to H'$ are in bijection with pairs $(\alpha,\, f)$, where
$\alpha: \Gamma\to \Gamma'$ is a group isomorphism,  $f:
\ind_\alpha(V )\to V'$ is an isomorphism such that  
$r_0'\circ(\alpha \times \alpha) = r_0$,
and $r_{1}'\circ (f \ot f) = \ind_\alpha(r_{1})$.
\end{lemma}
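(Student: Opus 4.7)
The plan is to verify the claimed bijection by going in both directions. In the forward direction, let $\phi: H \to H'$ be a co-quasitriangular Hopf algebra isomorphism. Since the coradicals of $H$ and $H'$ are $k[\Gamma]$ and $k[\Gamma']$ respectively, $\phi$ restricts to a group isomorphism $\alpha: \Gamma \to \Gamma'$, and the compatibility $r = r'\circ(\phi\ot\phi)$ restricted to $\Gamma\times\Gamma$ reads $r_0 = r_0'\circ(\alpha\times\alpha)$.

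To extract $f$, observe that for each $x_i$ the element $\phi(x_i)$ is an $(\alpha(g_i),1)$-skew primitive in $H'$, so $\phi(x_i) = a_i(1-\alpha(g_i)) + v_i$ with $a_i\in k$ and $v_i$ in the span of the skew-primitive generators of $H'$ lying over $\alpha(g_i)$. For any $g\in\Gamma$ apply the identity $r(g,x_i) = r'(\alpha(g),\phi(x_i))$; using the vanishing $r(g,x_i)=0=r'(\alpha(g),v_i)$ established in the proof of Theorem~\ref{Thm 1}, this reduces to $a_i\bigl(1 - r_0'(\alpha(g),\alpha(g_i))\bigr)=0$. Since $r_0'\circ(\alpha\times\alpha)=r_0$ and $r_0(g_i,g_i) = \chi_i(g_i) = -1$, setting $g=g_i$ yields $2a_i=0$, whence $a_i=0$. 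Consequently $\phi$ carries $V$ into $V'$, and $f:=\phi|_V$, being the restriction of a Hopf algebra morphism, intertwines the $\Gamma$- and $\Gamma'$-actions and coactions via $\alpha$, so it is an isomorphism $\ind_\alpha(V)\to V'$ in $\C(\Gamma',r_0')$. The remaining condition is just $r|_{V\ot V} = r'\circ(f\ot f)$, i.e.\ $\ind_\alpha(r_1) = r_1'\circ(f\ot f)$.

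For the reverse direction, given $(\alpha,f)$ define $\varphi_{(\alpha,f)}$ as in \eqref{varphi}. The defining algebra relations of $H$ are preserved because $f$ is $\Gamma$-equivariant via $\alpha$ and $\mathfrak{B}(V')$ carries the analogous quadratic and nilpotency relations; the coalgebra structure is preserved because $f(x_i)$ is an $(\alpha(g_i),1)$-skew primitive. Thus $\varphi_{(\alpha,f)}:H\to H'$ is a Hopf algebra isomorphism, and Lemma~\ref{uniqueness} applied to the generating set $\Gamma\cup\{x_1,\dots,x_n\}$ shows that $r$ and $r'\circ(\varphi_{(\alpha,f)}\ot\varphi_{(\alpha,f)})$, both being co-quasitriangular structures on $H$ whose restrictions to $\Gamma\times\Gamma$ and $V\ot V$ agree by hypothesis on $(\alpha,f)$, must coincide on all of $H$. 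The two constructions are mutually inverse on the nose by inspection of generators.

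The main technical step is the forward argument that $a_i=0$, which depends crucially on the symmetric-type hypothesis $\chi_i(g_i)=-1$; without it, one could absorb a nontrivial multiple of $1-\alpha(g_i)$ into $\phi(x_i)$ and the natural candidate for $f$ would not land in $V'$. Once this vanishing is in hand the remaining verifications are bookkeeping, aided by the structural information already assembled in Section~\ref{Class of CQT} and by Lemma~\ref{uniqueness}.
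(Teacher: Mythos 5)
Your proof is correct and its overall architecture coincides with the paper's: restrict to group-likes to get $\alpha$, expand $\phi(x_i)$ as a multiple of $1-\alpha(g_i)$ plus an element of $V'$, kill the first summand, conclude that $\phi|_V$ is a morphism $\ind_\alpha(V)\to V'$, and then reduce the matching of $r$-forms to their restrictions via Lemma~\ref{uniqueness}. The one step you do genuinely differently is the vanishing $a_i=0$: you extract it from the co-quasitriangular compatibility $r(g,x_i)=r'(\alpha(g),\phi(x_i))$ together with $r(g,x_i)=0$ and $r_0(g_i,g_i)=-1$, whereas the paper gets it purely Hopf-algebraically from $\phi(g)\phi(x_i)=\chi_i(g)\phi(x_i)\phi(g)$, which forces $a_i=a_i\chi_i(g)$ for all $g$ and hence $a_i=-a_i$ at $g=g_i$. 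Both hinge on the symmetric-type condition $\chi_i(g_i)=-1$, as you correctly flag; the paper's variant has the small advantage of showing that \emph{every} Hopf algebra isomorphism $H\to H'$ (not only a co-quasitriangular one) already carries $V$ to $V'$, while yours uses the $r$-form from the outset. One cosmetic omission: you should note, as the paper does, that if $\alpha(g_i)$ is not among the $g_j'$ then $v_i=0$ and $a_i=0$ forces $\phi(x_i)=0$, contradicting injectivity — your setup absorbs this case but does not state it.
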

\begin{proof}
Let $f : H \to H'$ be an isomorphism of co-quasitriangular Hopf
algebras. Since $f$ takes group-like elements to group-like
elements, it restricts to a group isomorphism $\alpha: \Gamma \to
\Gamma'$. Let us show that $f$ induces an isomorphism
$\ind_{\alpha} (V) \to V'$.

Notice first that $\alpha (g_{i}) \in \{g'_{j}\}$. Indeed, if this
is not the case, then $f (x_{i}) = a (1 - f(g_{i}))$, for some $a
\in k$. But $f (x_{i})$ anti-commutes with $f (g_{i})$, while $a
(1 - f(g_{i}))$ commutes with $f(g_{i})$, so $f(x_{i}) = 0$. This,
however, contradicts the injectivity of $f$. Thus, $\alpha (g_{i})
\in \{g'_{j}\}$, for all $i$. It follows that there exist scalars
$a_{i}$, $b_{ij} \in k$ such that $b_{ji} \big( g'_{j} - f(g_{i})
\big) = 0$ and
$$
f (x_{i}) = a_{i} (1 - f(g_{i})) + \sum_{j} b_{ji} x'_{j},\qquad
i=1,\dots,n.
$$
We must have $f(g) f(x_{i}) = \chi_{i} (g) f (x_{i}) f(g)$, for
all $g \in \Gamma$. Now
\begin{align*}
f (g) f (x_{i}) & =  a_{i} (f(g) - f(gg_{i})) + \sum_{j} b_{ji} f(g) x'_{j} \\
& = a_{i} (f(g) - f(gg_{i})) + \sum_{j} b_{ji} \chi_{j}' \big( f
(g) \big) x'_{j} f (g)
\end{align*}
and
$$
\chi_{i} (g) f (x_{i}) f(g) = a_{i} \chi_{i} (g) (f(g) -
f(gg_{i})) + \sum_{j} b_{ji} \chi_{i} (g) x'_{j} f (g).
$$
Thus, $f(g) f(x_{i}) = \chi_{i} (g) f (x_{i}) f(g)$ if and only if
$a_{i} = a_{i} \chi_{i} (g)$ and $\chi_{j}' \big( f (g) \big)
b_{ji} = \chi_{i} (g)  b_{ji}$, for all $j$ and $g \in \Gamma$.
Taking $g = g_{i}$ in the first condition, we obtain $a_{i} = 0$.
The second condition is equivalent to $b_{ji} (\chi_{i} -
\chi'_{j} \circ f) = 0$. It follows that
$$
f (x_{i}) = \sum_{j} b_{ji} x'_{j}
$$
where $b_{ji} \big( g'_{j} - \alpha (g_{i}) \big) = 0$ and $b_{ji}
(\chi_{i} \alpha^{-1} - \chi'_{j}) = 0$. This means precisely that
the restriction of $f$ to $V$  is a morphism in $\C(\Gamma',\,r_0')$ from $\ind_{\alpha} (V)$ to $V'$. 
Since $f$ is an isomorphism, the restriction is also an isomorphism with
inverse the restriction of the inverse of $f$ to $V'$.

We have proven that if $f : H \to H'$ is a Hopf algebra
isomorphism then it induces, by restriction, isomorphisms $\alpha:
\Gamma \to \Gamma'$ and $f: \ind_{\alpha} (V) \to V'$. We have $r'
\circ (f \ot f) = r$ if and only if $r_0' \circ (\alpha \times
\alpha) = r_0$ and $r_{1}' \circ (f \ot f) = \ind_\alpha(r_{1})$.
It is easy to check that every such data comes from an isomorphism
of co-quastriangular Hopf algebras $H \to H'$. This completes the
proof.
\end{proof}

\begin{lemma}
\label{lemma 2} 
Let $\sigma$ be a $2$-cocycle on $H =
\mathfrak{B}(V) \# k[\Gamma]$ such that $\sigma|_{\Gamma\times
\Gamma}=1$. Then the twisted $r$-form $r^\sigma$ (see
\eqref{rsigma}) on $H^\sigma$ satisfies
\begin{equation}
r^\sigma|_{V\ot V}  = r|_{V\ot V} + 2 (\sigma|_{V\ot V})_{sym}.
\end{equation}
\end{lemma}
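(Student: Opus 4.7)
My plan is a direct computation using formula~\eqref{rsigma}, evaluated on the skew-primitive generators $x_i, x_j \in V \subset H$. Expanding
\[
\Delta^{(2)}(x_i) = g_i \ot g_i \ot x_i + g_i \ot x_i \ot 1 + x_i \ot 1 \ot 1,
\]
with the analogue for $x_j$, the sum defining $r^\sigma(x_i, x_j)$ becomes a sum of nine terms indexed by a choice of summand in each iterated coproduct. Three independent vanishing inputs then kill six of them: $r(g, x_k) = 0 = r(x_k, g)$ for $g \in \Gamma$ and any basis vector $x_k$ (from the proof of Theorem~\ref{Thm 1}); the normalization $\sigma^{-1}(x_k, 1) = \eps(x_k) = 0 = \sigma^{-1}(1, x_k)$, inherited by the convolution inverse from the counitality of $\sigma$; and $\sigma|_{\Gamma \times \Gamma} = 1$ by hypothesis, which trivializes the remaining $\sigma$-factors of the form $\sigma(g_\ell, g_m)$ and $\sigma^{-1}(1,1)$.

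The three surviving diagonal terms (top-top, middle-middle, bottom-bottom) give
\[
r^\sigma(x_i, x_j) = r_0(g_i, g_j)\,\sigma^{-1}(x_i, x_j) + r_1(x_i, x_j) + \sigma(x_j, x_i).
\]
A companion calculation using $(\sigma * \sigma^{-1})(x_i, x_j) = \eps(x_i)\eps(x_j) = 0$, subject to the same vanishing inputs, reduces to $\sigma(x_i, x_j) + \sigma^{-1}(x_i, x_j) = 0$, so $\sigma^{-1}(x_i, x_j) = -\sigma(x_i, x_j)$. Substituting leaves
\[
r^\sigma(x_i, x_j) = r_1(x_i, x_j) + \sigma(x_j, x_i) - r_0(g_i, g_j)\,\sigma(x_i, x_j),
\]
which must be matched against $r_1(x_i, x_j) + \sigma(x_i, x_j) + \sigma(x_j, x_i) = \bigl(r + 2(\sigma|_{V \ot V})_{sym}\bigr)(x_i, x_j)$.

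The main obstacle is the final coefficient identification: one must argue that $-r_0(g_i, g_j)\,\sigma(x_i, x_j)$ coincides with $\sigma(x_i, x_j)$. I expect this to follow by combining the hypothesis $V \in \Z_{sym}(\C(\Gamma, r_0))_-$ with the constraints on $\sigma|_{V \ot V}$ recorded in Proposition~\ref{all twists on H} and Remark~\ref{sigma from A}: together these force $\sigma(x_i, x_j)$ to be supported on pairs $(i,j)$ for which $\chi_i(g_j) = -1$ (equivalently $r_0(g_i, g_j) = -1$), on which the required equality holds automatically. The technical heart of the argument is ensuring this compatibility between the twist coefficient and the Yetter-Drinfeld structure carried by $V$.
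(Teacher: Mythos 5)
Your reduction is correct and coincides exactly with the paper's opening computation: the nine-term expansion of \eqref{rsigma}, the vanishing of the six cross terms via $r(g,x_k)=0=r(x_k,g)$, counitality of $\sigma^{\pm1}$, and $\sigma|_{\Gamma\times\Gamma}=1$, and the identity $\sigma^{-1}(x_i,x_j)=-\sigma(x_i,x_j)$ all check out, and you have correctly isolated the crux, namely that one must show $(1+\chi_i(g_j))\,\sigma(x_i,x_j)=0$ for all $i,j$.

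The gap is in your proposed justification of that last identity. Proposition~\ref{all twists on H} and Remark~\ref{sigma from A} constrain only the braided--antisymmetrized combination $\alpha_{ij}=\tfrac12\bigl(\sigma(x_i,x_j)-\chi_j(g_i)\sigma(x_j,x_i)\bigr)$ --- this is the datum in Mombelli's classification and the only gauge-invariant part of $\sigma|_{V\ot V}$ --- and they say nothing about the individual values $\sigma(x_i,x_j)$. Concretely, a gauge transformation by $u$ with $u|_{\Gamma}=1$ replaces $\sigma(x_i,x_j)$ by $\sigma(x_i,x_j)+u(x_ix_j)-u(x_i)u(x_j)$, and $u(x_ix_j)$ can be prescribed arbitrarily; so a $2$-cocycle with $\sigma|_{\Gamma\times\Gamma}=1$ can perfectly well have $\sigma(x_i,x_j)\neq 0$ on a pair with $\chi_i(g_j)\neq -1$. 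Hence the support condition you want does not follow from the cocycle classification, and your final step fails. The paper closes this step by a different mechanism: it uses the fact, established in the proof of Theorem~\ref{Thm 1}, that the restriction to $V\ot V$ of \emph{any} $r$-form on $H$ is a morphism in $\YD$. Applying this to both $r$ and $r^{\sigma}$, the difference
\[
\bigl(r^{\sigma}-r\bigr)(x_i,x_j)=\sigma(x_j,x_i)-\chi_i(g_j)\sigma(x_i,x_j)
\]
is the value of a $\YD$-morphism $V\ot V\to k$, and by the argument of Lemma~\ref{symmetry} such a morphism is supported on pairs with $(g_j,\chi_j)=(g_i^{-1},\chi_i^{-1})$, where $\chi_i(g_j)=-1$ and the expression becomes $\sigma(x_i,x_j)+\sigma(x_j,x_i)$. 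Note that this identifies $r^{\sigma}|_{V\ot V}-r|_{V\ot V}$ with the part of $2(\sigma|_{V\ot V})_{sym}$ supported on that locus; matching it with all of $2(\sigma|_{V\ot V})_{sym}$ additionally requires $(\sigma|_{V\ot V})_{sym}$ to vanish off that locus, which holds when $\sigma|_{V\ot V}$ is itself a morphism in $\YD$ (for instance for the invariant cocycles of Corollary~\ref{trivial restriction}, the case actually used in Section~\ref{proof section}). In any case, the route through Remark~\ref{sigma from A} alone cannot deliver the pointwise vanishing of $\sigma(x_i,x_j)$ that your argument needs.
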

\begin{proof}
Using formula \eqref{rsigma}  we compute
\begin{eqnarray*}
r^\sigma (x_i,\, x_j)
&=& r(g_i,\, g_j) \, \sigma^{-1}(x_i,\, x_j) +  r(x_i,\, x_j) + \sigma(x_j,\, x_i) \\
&=& r(x_i,\, x_j)  + \left( \sigma(x_j,\, x_i) - \chi_i(g_j) \sigma(x_i,\, x_j)  \right)  \\
&=& r(x_i,\, x_j)  +(\sigma\circ \tau_{V,V} -\sigma\circ c_{V,V} \circ \tau_{V,V}) (x_i,\, x_j).
\end{eqnarray*}
Since restrictions of $r$ and $r^\sigma$ on $V\ot V$  are morphism in $\YD$ we  conclude, using  Lemma~\ref{symmetry}, that
\[
\sigma|_{V\ot V} \circ \tau_{V,V} -\sigma|_{V\ot V}  \circ c_{V,V} \circ \tau_{V,V}  = 2 (\sigma|_{V\ot V})_{sym},
\]
which implies the statement.
\end{proof}

We now proceed with the proof of Theorem~\ref{Thm 2}. We need to
show that the functor $F$ \eqref{F} is surjective and fully faithful.

(1) {\em $F$ is surjective}.

For this end it suffices to check that the co-quasitriangular
structure on $H$ defined using $r :V\ot V \to k$ is  gauge equivalent, by means of an invariant
$2$-cocycle on $H$, to the one defined using an alternating morphism $V\ot V \to k$ in $\C(\Gamma,\, r_0)$. 
Let $\sigma$ be an invariant $2$-cocycle on
$H$  such that $\sigma|_{\Gamma\times \Gamma} =1$ (such
$2$-cocycles are classified in Corollary~\ref{trivial
restriction}) and let  $r^\sigma = \sigma_{21} * r * \sigma^{-1}$.
By Lemma~\ref{lemma 2} we have
\[
r^\sigma|_{V\ot V} =r|_{V\ot V}  +2 (\sigma|_{V\ot V})_{sym}.
\]
Thus, we can take $\sigma$ such that $(\sigma|_{V\ot
V})_{sym} = -\frac{1}{2}(r\circ \tau)_{sym}$, so that $r^\sigma|_{V\ot V}   =(r|_{V\ot V} )_{alt}$, i.e., 
$r^\sigma|_{V\ot V}$ is alternating.

Since  $\C(\Gamma,\, r_0, \, V,\, r)$ and $\C(\Gamma,\, r_0, \, V,\, r^\sigma)$ are equivalent braided tensor
categories, the surjectivity of $F$  follows.

(2) {\em $F$ is faithful.} We need to check that $F$ is injective
on morphisms. It is clear from definitions that $F(\alpha, f) = F(\alpha',\, f')$
implies $\alpha = \alpha'$.  Therefore, it remains to check that for an automorphism
$(\id_\Gamma,\, f)$ of $(\Gamma,\, q,\, V,\, r)\in \mathcal{Q}$ one has $F(\id_\Gamma,\, f)\cong \id_{\C(\Gamma,\, q,\, V,\, r)}$ 
as a tensor functor if and only if  $f=\pm \id_V$. One implication is clear since $-\id_V$ preserves any bilinear form. 

Note that the Hopf algebra automorphism $\varphi_{(\id_\Gamma,f)}$ of $H$ defined in \eqref{varphi} gives rise
to a trivial tensor autoequivalence of $\Corep(H)$ if and only if  it is given by
\[
h \mapsto \chi \rightharpoonup h \leftharpoonup \chi^{-1},\quad h\in H
\]
for some character $\chi\in H^*$.  
The condition that it preserves $r$ is equivalent to $\chi(g_i)\chi(g_j)=1$ for all $i,j=1,\dots,n$, i.e., 
to $\chi$ being identically equal to $1$ or $-1$ on the support of $V$. By the Remark~\ref{-1 on support V}
there exists $\chi$ such that this value is $-1$, so that $f=\pm \id_V$. 

(3) {\em $F$ is full.} We need to check that $F$ is surjective on
morphisms. We claim that any braided tensor equivalence $\Phi$ between
$\Corep(\mathfrak{B}(V) \# k[\Gamma],\, r)$ and
$\Corep(\mathfrak{B}(V') \# k[\Gamma'],\, r')$ is isomorphic to one
coming from a co-quasitriangular Hopf algebra isomorphism $\mathfrak{B}(V) \#
k[\Gamma] \to \mathfrak{B}(V') \# k[\Gamma']$.   By the result of
Davydov \cite{Da10} $\Phi$  corresponds to a pair $(f,\,
\sigma)$, where $\sigma$ is a $2$-cocycle on $\mathfrak{B}(V) \#
k[\Gamma]$ and $f: (\mathfrak{B}(V) \# k[\Gamma])^\sigma \to
\mathfrak{B}(V') \# k[\Gamma']$ is Hopf algebra isomorphism such
that
\[
r \circ (f \otimes f) = r^\sigma.
\]
The last condition corresponds to the braided property of the
equivalence.

We must have $\sigma|_{\Gamma\times \Gamma} =1$ since non-trivial
twisting changes the braided equivalence class of
$\Corep(k[\Gamma],\, r_0)$. By Lemma~\ref{lemma 2} we have
\[
\left( r \circ (f \otimes f) - r   \right)
|_{V\ot V} = 2 (\sigma|_{V\ot V})_{sym}.
\]
The left hand side is of the above equality is alternating, while the right hand
side is symmetric.  Hence, both sides are equal to $0$
and so $\sigma$ is gauge equivalent to the trivial $2$-cocyle by
Proposition~\ref{all twists on H}. This means that $\Phi$ is isomorphic
to the equivalence induced by a  co-quasitriangular Hopf algebra isomorphism, 
so the result follows from Lemma~\ref{lemma 1}.

\begin{remark}
\label{r from Ext}
We can give a conceptual explanation of the reason why $(r_1)_{alt}$
is an invariant of the braided tensor category $\C:=\C(\Gamma,\, r_0,\, V,\, r_1)$.

Let $g\in \Gamma$. We will also use $g$ to denote the
corresponding invertible $H$-comodule. Recall that $\Ext_\C(g,\,
1) \cong  P_{1,g}(H)/k(1-g)$, where $P_{1,g}(H)$ denotes the space
of  $(1,g)$-skew primitive elements of $H$.  Explicitly, elements
of $\Ext_\C(g,\, 1)$ are in bijection  with equivalence classes of
short exact sequences
\[
0 \to 1 \xrightarrow{\iota} V_x \xrightarrow{p} g \to 0,
\]
where $1$ denotes the trivial comodule $k$.  The $2$-dimensional
comodule  $V_x$ is  a vector space with a basis $v_0,\, v_1$ and
$H$-coaction given by
\begin{equation}
\label{v0 and v1} \rho(v_0) = v_0 \ot 1,\qquad \rho(v_1) = v_0 \ot
x + v_1\ot g,
\end {equation}
where $x\in P_{1,g}(H)$.

Let $x' \in P_{1,g'}(H), \,g'\in \Gamma$, be another
skew-primitive element of $H$, let
\[
0 \to 1 \xrightarrow{\iota'} V_{x'} \xrightarrow{p'} g' \to 0
\]
be the corresponding extension, and let $v_0',\, v_1'$ be a basis
of $V_{x'}$ defined analogously to \eqref{v0 and v1}.

Let $\beta_{x,x'}= c_{V_{x'},V_x}\circ c_{V_x,V_{x'}}$ denote the
square of the braiding on $V_x\ot V_{x'}$.  Using formula
\eqref{braiding from r-form}   one computes
\begin{eqnarray*}
\beta_{x,x'}(v_0 \ot v_0') &=& v_0 \ot v_0', \\
\beta_{x,x'}(v_0 \ot v_1') &=& v_0 \ot v_1', \\
\beta_{x,x'}(v_1 \ot v_0') &=& v_1 \ot v_0', \\
\beta_{x,x'}(v_1 \ot v_1') &=& v_1 \ot v_1' + (r_1(x,\,x') +
r_0(g,\,g')r_1(x',\,x)) v_0\ot v_0'.
\end{eqnarray*}
Let $s:= r_1- r_1\circ \tau$.
Combining Lemma~\ref{symmetry} with  above computation  we see that
\begin{equation}
\label{betaxx'}
\beta_{x,x'}=  \id_{V_x\ot V_{x'}} + s(x,\,x') (p \ot
p')\circ (\iota\ot \iota')
\end{equation}
for all $x,x'\in V = \Ext_\C(\Gamma,\, 1)$ (note that $(p \ot p')\circ
(\iota\ot \iota')\in \End_\C(V_x\ot V_{x'})$ whenever
$s(x,\,x')\neq 0$).

It follows from \eqref{betaxx'} that $s= (r_1)_{alt}$ is an invariant of the
braided equivalence class of $\C$ (a computation
establishing  this fact  is straightforward and can be found in the proof of \cite[Proposition 6.7]{BN15}).
\end{remark}

Let $\C= \C(\Gamma,\, q,\, V,\, r)$.  Theorem~\ref{Thm 2} allows to compute the group $\Aut^{br}(\C)$
of isomorphism classes of  braided autoequivalences of $\C$. Namely, let
\begin{eqnarray*}
\Aut(V,\, r) &:=& \{ f\in \Aut_{\C(\Gamma,\,q)}(V) \mid r\circ (f\ot f) = r \} / \{\pm \id_V\}, \\
O(\Gamma,\, q,\,r) &:=& \{\alpha\in \Aut(\Gamma) \mid q\circ \alpha= q \mbox{  and }  \ind_\alpha(r),\,r \mbox{ are congruent in } \C(\Gamma,\,q)\}.
\end{eqnarray*}

\begin{corollary}
\label{group of autoequivalences}
There is a short exact sequence
\begin{equation}
\label{se for Autbr}
1 \to \Aut(V,\,r) \to \Aut^{br}(\C) \to O(\Gamma,\, q,\,r)\to 1.  
\end{equation}
\end{corollary}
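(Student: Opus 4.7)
The plan is to read off the exact sequence \eqref{se for Autbr} directly from Theorem~\ref{Thm 2}. By that theorem, the functor $F$ of \eqref{F} is an equivalence of groupoids, so it induces a group isomorphism
\[
\Aut^{br}(\C) \cong \Aut_{\mathcal{Q}}(\Gamma,\, q,\, V,\, r).
\]
Thus it suffices to exhibit \eqref{se for Autbr} at the level of the groupoid $\mathcal{Q}$ of metric quadruples.

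The first step is to define the right-hand map. By the definition of $\mathcal{Q}$, an automorphism of $(\Gamma,\,q,\,V,\,r)$ is an equivalence class, modulo the relation $(\alpha,f)\sim(\alpha,-f)$, of pairs $(\alpha,\,f)$ in which $\alpha\in\Aut(\Gamma)$ satisfies $q\circ\alpha=q$ and $f:\ind_\alpha(V)\xrightarrow{\sim} V$ is an isomorphism in $\C(\Gamma,\,q)$ with $r\circ(f\ot f)=\ind_\alpha(r)$. Sending such a class to $\alpha$ gives a well-defined homomorphism
\[
\pi:\Aut_{\mathcal{Q}}(\Gamma,\, q,\, V,\, r)\to \Aut(\Gamma),
\]
whose image is exactly $O(\Gamma,\,q,\,r)$: the existence of an $f$ as above is precisely the statement that $\ind_\alpha(r)$ and $r$ are congruent in $\C(\Gamma,\,q)$, and conversely every $\alpha\in O(\Gamma,\,q,\,r)$ admits such an $f$ by the definition of congruence. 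This yields surjectivity onto $O(\Gamma,\,q,\,r)$.

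The second step is to identify the kernel of $\pi$. An element of $\ker\pi$ is represented by a pair $(\id_\Gamma,\,f)$, where $f$ is an automorphism of $V$ in $\C(\Gamma,\,q)$ satisfying $r\circ(f\ot f)=r$, and two such pairs are identified under $(\id_\Gamma,\,f)\sim(\id_\Gamma,\,-f)$. Comparing with the definition
\[
\Aut(V,\,r) = \{ f\in \Aut_{\C(\Gamma,\, q)}(V)\mid r\circ(f\ot f)=r\}/\{\pm\id_V\},
\]
we see at once that $\ker\pi\cong\Aut(V,\,r)$.

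The only point that requires care is that the quotient by $\{\pm\id_V\}$ in the definition of $\Aut(V,\,r)$ truly matches the relation $\sim$ in $\mathcal{Q}$, i.e.\ that $(\id_\Gamma,\,\id_V)$ and $(\id_\Gamma,\,-\id_V)$ are genuinely identified in $\Aut_{\mathcal{Q}}(\Gamma,\,q,\,V,\,r)$ and that no additional identifications occur in the kernel. The first is the definition of $\sim$; the second is precisely the faithfulness part of Theorem~\ref{Thm 2} (compare part (2) of the proof in Section~\ref{proof section}, where it is shown that the two inner automorphisms $f=\pm\id_V$ are the only ones inducing the identity autoequivalence, using Remark~\ref{-1 on support V} to produce a character equal to $-1$ on the support of $V$). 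Assembling these observations gives the short exact sequence \eqref{se for Autbr}.
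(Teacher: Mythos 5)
Your proposal is correct and follows exactly the route the paper intends: the corollary is stated as an immediate consequence of Theorem~\ref{Thm 2}, and your argument simply unwinds the definition of $\Aut_{\mathcal{Q}}(\Gamma,\,q,\,V,\,r)$ — projection onto $\alpha$ with image $O(\Gamma,\,q,\,r)$ by the definition of congruence, and kernel the classes $(\id_\Gamma,\,f)$ modulo $f\sim -f$, which is $\Aut(V,\,r)$. Your added remark that the quotient by $\{\pm\id_V\}$ matches the identifications forced by faithfulness of $F$ is exactly the content of part (2) of the proof of Theorem~\ref{Thm 2}, so nothing is missing.
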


\begin{example}
Let us consider $\C=\Corep(E(V),\,r)$, where $E(V)$ is the  Hopf algebra from  Example~\ref{E(n)} with the co-quasitriangular structure
given by the zero bilinear form on $V$ (this structure is symmetric). In this case $\Gamma =\mathbb{Z}/2\mathbb{Z}$,
so $O(\Gamma,\, q,\,r)=1$ and  Corollary~\ref{group of autoequivalences} implies  that
$\Aut^{br}(\C) = GL_n(V)/ \{\pm \id_V\}$, cf.\ \cite{BN15}. 
\end{example}

\section{The Drinfeld center of  a pointed braided tensor  category}
\label{Sect: Drinfeld center}

It is well known that the Drinfeld center of a pointed braided fusion category is pointed (see, e.g., \cite[Proposition 5.8]{DN13}).
This is no longer true in the non-semisimple case. Indeed, the Drinfeld center is always factorizable, cf.\  Corollary~\ref{properties}(iii). 

Let   $\C=\C(\Gamma,\, r_0,\, V,\, r_1)$
be  a pointed braided tensor category corresponding to a  non-zero {\em self-dual} Yetter-Drinfeld module $V$.
In this Section we show that the trivial component of the universal grading of $\Z(\C)$ is pointed, i.e., $\Z(\C)$ is nilpotent
of nilpotency class $2$ (\cite{GN08}).

\subsection{The group-like elements of $D(H)^{*}$}
\label{adjoint subcategory}

It is well known \cite[Proposition 10]{R93} that for a finite dimensional Hopf algebra $H$
 the group of central group-like elements of $D(H)$ is
\[
G \big( D(H)^{*} \big)
\cong G \big( D(H) \big) \cap Z \big( D(H) \big).
\]
More explicitly,
\begin{align*}
G \big( D(H)^{*} \big) & = \{ g \ot \gamma \mid  (g, \gamma) \in G
(H) \times G (H^{*}), \\
& \qquad \qquad \qquad \qquad  \sum \gamma (h_{(1)}) h_{(2)} g =
\sum \gamma (h_{(2)}) g h_{(1)},\, \forall h \in H\}
\end{align*}
Let $V$ be a quantum linear space of symmetric type and let $H = \mathfrak{B} (V) \# k[\Gamma]$.
It is not hard to check that
$$
G \big( D(H)^{*} \big) = \{ g \ot \gamma \mid (g, \gamma) \in
\Gamma \times \widehat{\Gamma}, \, \gamma (g_{i}) = \chi_{i} (g), \mbox{for all } i =
1, \dots, n\}.
$$
Let $b : (\Gamma \times \widehat{\Gamma}) \times (\Gamma
\times \widehat{\Gamma}) \to k^{\times}$ be the canonical non-degenerate bicharacter
defined by
\[
b \big( (g, \chi), (g', \chi') \big) = \chi (g') \chi' (g), \qquad
(g, \chi), (g', \chi') \in \Gamma \times \widehat{\Gamma}.
\]
Consider the  subgroup
\begin{equation}
\label{Sigma}
\Sigma := \langle (g_{i}, \chi_{i}^{-1}) \mid i = 1, \dots, n \rangle \subset \Gamma \times \widehat{\Gamma}.
\end{equation}

\begin{remark}
The above $\Sigma$ is an isotropic subgroup of $\Gamma \times \widehat{\Gamma}$, i.e., $\Sigma\subset \Sigma^{\perp}$,
where $\Sigma^{\perp}$ is the orthogonal complement of $\Sigma$ with respect to $b$.
\end{remark}

It follows that
\begin{equation}
\label{group-likes of the dual double}
G \big( D(H)^{*} \big) \cong G \big( D(H) \big) \cap Z \big( D(H)
\big) \cong \Sigma^{\perp}.
\end{equation}

Let $H$ be a Hopf algebra. It was shown in \cite{GN08} that
for any Hopf subalgebra $K$ of $H$ contained in the center of $H$
tensor category $\Rep (H)$ is graded by $G (K^{*})$.
The trivial component of this grading is $\Rep
(H/HK^{+})$. The maximal central Hopf subalgebra of $H$ provides
the universal grading of $\Rep (H)$. In this case, the trivial
component is $\Rep (H)_{\textnormal{ad}}$, the adjoint subcategory
of $\Rep (H)$.

If $H$ is quasitriangular then the maximal central Hopf subalgebra of $H$
is the group algebra of $G(H) \cap Z (H)$ and the universal
grading group of $\Rep (H)$ is the group of characters of $G(H)
\cap Z (H)$.

We will be interested in the adjoint subcategory of
$\Z(\Corep(H))$, where $H = \mathfrak{B} (V) \# k[\Gamma]$ for a
quantum linear space $V\in \YD$ of symmetric type. We have
\begin{equation}
\label{ZCorepH}
\Z(\Corep (H)) \cong \Rep(D(H)^{\textnormal{cop}}) \cong \Corep(D(H)^{*\textnormal{op}}).
\end{equation}
Since $D(H)^{\textnormal{cop}}$ is quasitriangular, the universal
grading group of $\Z(\Corep (H))$ is isomorphic to
$\widehat{\Sigma^{\perp}}$, according to \eqref{group-likes of the
dual double}.

\begin{remark}
The Drinfeld double $D(\mathfrak{B} (V) \# k[\Gamma])$ was studied by Beattie in \cite{B03}.
\end{remark}

\subsection{The adjoint subcategory of the center of $\C$ }


If  $(H, r)$ is a
co-quasitriangular Hopf algebra then
$$
\iota_{r} : H \to D(H)^{* \, \textnormal{op}}, \quad \iota_{r} (x)
= x_{(1)} \ot r (-, x_{(2)})
$$
is a Hopf algebra homomorphism. This corresponds to the embedding $\C \hookrightarrow \Z(\C)$
from Remark~\ref{braided TC in center}.

Let $V \in \, ^{\Gamma}_{\Gamma} \mathcal{YD}$ be a quantum linear
space of symmetric type associated to the datum $(g_{1}, \dots,
g_{n}, \chi_{1}, \dots, \chi_{n})$ and let $H = \mathfrak{B} (V)
\# k[\Gamma]$. Suppose $H$ admits co-quasitriangular structures.
Then, according to Theorem~\ref{Thm 1}, $r$-forms on $H$ are
in bijection with pairs $(r_{0}, r_{1})$, where $r_{0}$ is a
bicharacter of $\Gamma$ such that $V\in \Z_{sym}(\C(\Gamma,\,r_0))_-$
and $r_{1} : V \ot V \to k$ is a morphism in  $\C(\Gamma,\,r_0)$.

Consider the $r$-form $r$ on $H$ corresponding to the pair
$(r_{0}, 0)$. Then  $\iota_{r} (g_{i}) = g_{i} \ot \chi_{i}^{-1}$
and $\iota_{r} (x_{i}) = x_{i} \ot \varepsilon$. Thus, $D(H)^{*}$
contains group-like elements $g_{i} \ot \chi_{i}^{-1}$ and
$(g_{i} \ot \chi_{i}^{-1}, 1)$-skew primitive elements $x_{i} \ot
\varepsilon$, $i=1,\dots,n$.

Assume now that $V$ is a self-dual object of $\YD$.
In this case the set $\{(g_{i},\, \chi_{i}) \mid i = 1, \dots, n\}$ is closed under taking inverses.
Let  $r_{1} : V \ot V \to k$  denote a non-degenerate evaluation morphism in $^{\Gamma}_{\Gamma} \mathcal{YD}$.
Let $r'$ be the $r$-form on $H$ corresponding to the pair $(r_{0},\, r_{1})$. Then
$\iota_{r'} (g_{i}) = g_{i} \ot \chi_{i}^{-1}$ and $\iota_{r'}
(x_{i}) = g_{i} \ot r' (-,\, x_{i}) + x_{i} \ot \varepsilon$. We see
that in this case $D(H)^{*}$ contains, in addition to the above, $(g_{i} \ot
\chi_{i}^{-1}, 1)$-skew primitive elements $g_{i} \ot r' (-,\, x_{i})$.

Let $A$ be the Hopf subalgebra of $D(H)^{*}$ generated by 
group-like elements $g_{i} \ot \chi_{i}^{-1}$ and 
skew-primitive elements $x_{i} \ot \varepsilon$ and $g_{i} \ot r'(-,\, x_{i})$, $i = 1, \dots, n$.

\begin{remark}
\label{dimA} By definition \eqref{Sigma}, the group of group-likes
of $A$ is $\Sigma$. The above skew-primitive elements  $x_{i} \ot
\varepsilon$ and $g_{i} \ot r'(-,\, x_{i}),\,i = 1, \dots, n$
constructed above are linearly independent and form a
$2n$-dimensional quantum linear space of symmetric type in
$^{\Sigma}_{\Sigma}\mathcal{YD}$. Therefore,
\[
\dim_k (A) = |\Sigma|\, 2^{2n}.
\]
\end{remark}

\begin{proposition}
\label{quotient_of_D(H)}
Let $K=k[G(D(H^*))]$.  We have
\begin{equation}
\label{A*}
A^{*} \cong D(H) / D(H)K^{+}.
\end{equation}
\end{proposition}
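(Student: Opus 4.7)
The plan is to reinterpret $D(H)/D(H)K^+$ as the dual Hopf algebra $A^*$ via the standard duality between Hopf subalgebras of $D(H)^*$ and Hopf quotients of $D(H)$. The inclusion $A \hookrightarrow D(H)^*$ dualizes to a surjective Hopf algebra map $\pi : D(H) \twoheadrightarrow A^*$ whose kernel is the annihilator
\[
A^{\perp} = \{ x \in D(H) \mid a(x) = 0 \text{ for all } a \in A \}.
\]
Thus it suffices to prove $A^{\perp} = D(H)K^+$, which I will do by first matching dimensions and then establishing the inclusion $D(H)K^+ \subseteq A^{\perp}$.

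For the dimension count, the Nichols--Zoeller theorem gives that $D(H)$ is free over the finite-dimensional Hopf subalgebra $K$, so
\[
\dim\bigl( D(H)/D(H)K^+ \bigr) = \dim D(H)/\dim K.
\]
By \eqref{group-likes of the dual double} and the non-degeneracy of the bicharacter $b$, $\dim K = |\Sigma^{\perp}| = |\Gamma|^2/|\Sigma|$. Combined with $\dim D(H) = (\dim H)^2 = |\Gamma|^2 \cdot 2^{2n}$, this yields $\dim\bigl( D(H)/D(H)K^+ \bigr) = |\Sigma|\cdot 2^{2n}$, which equals $\dim A$ by Remark~\ref{dimA}.

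For the inclusion $D(H)K^+ \subseteq A^{\perp}$, note that $K^+$ is spanned by the elements $\sigma - 1$ with $\sigma \in \Sigma^{\perp}$ (viewed as central group-likes of $D(H)$), so the required condition is $a(y\sigma) = a(y)$ for all $a \in A$, $y \in D(H)$, and such $\sigma$. Translating through the coproduct of $D(H)^*$, which is dual to multiplication in $D(H)$, this becomes the fixed-point identity
\[
\phi_\sigma(a) := (\id \ot \ev_\sigma)\Delta_{D(H)^*}(a) = a.
\]
Since $\sigma$ is group-like in $D(H)$, $\phi_\sigma$ is an algebra endomorphism of $D(H)^*$, so it suffices to verify $\phi_\sigma(a) = a$ on the algebra generators of $A$ listed immediately before the statement.

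Writing $\sigma = \gamma \ot g \in \Sigma^{\perp} \subset \Gamma \times \widehat{\Gamma}$, this verification is straightforward. For the group-like generator $g_i \ot \chi_i^{-1}$ the condition collapses to $b((g_i, \chi_i^{-1}), (g, \gamma)) = 1$, which is the defining orthogonality of $\Sigma^{\perp}$. For each $(g_i \ot \chi_i^{-1}, 1)$-skew primitive generator $a \in \{x_i \ot \varepsilon,\, g_i \ot r'(-, x_i)\}$, the condition reduces to the single scalar equation $a(\sigma) = 0$, which holds since $\gamma \in \widehat{\Gamma}$ annihilates $V$ (giving $\gamma(x_i) = 0$) and $r'(\Gamma, V) = 0$ (as established in the proof of Theorem~\ref{Thm 1}). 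The main subtle point is to verify that the two summands $x_i \ot \varepsilon$ and $g_i \ot r'(-, x_i)$ of $\iota_{r'}(x_i)$ are \emph{individually} skew primitive in $D(H)^*$ with the claimed endpoints: this does not follow formally from the skew-primitivity of their sum and requires a short direct calculation with the multiplication formula of the Drinfeld double.
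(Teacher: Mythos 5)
Your proof is correct and takes essentially the same route as the paper's: both arguments verify the required identity only on the algebra generators of $A$, reducing via their group-like or skew-primitive nature to the orthogonality $\chi_i^{-1}(g)\,\gamma(g_i)=1$ for $(g,\gamma)\in\Sigma^{\perp}$ together with the vanishing $\gamma(x_i)=r'(g,x_i)=0$, and then conclude with the same dimension count $\dim D(H)/\dim K=|\Sigma|\,2^{2n}=\dim A$. The only difference is cosmetic: you work with the kernel $A^{\perp}$ of the dualized surjection $D(H)\twoheadrightarrow A^{*}$, while the paper works with the image of $\pi^{*}$ using the averaging idempotent $e$; these are the same argument in dual form.
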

\begin{proof}
Let $\pi : D(H) \to D(H)/ D(H)K^{+}$ be the canonical projection.
We claim that $A$ is the image of the dual Hopf algebra homomorphism
$\pi^{*} : \big( D(H) / D(H)K^{+} \big)^{*} \to D (H)^{*}$.

According to (\ref{group-likes of the dual double}), $K$ is the
group algebra of $\{ \gamma \ot g \mid (g, \gamma) \in
\Sigma^{\perp} \}$. Let
\[
e = \frac{1}{|\Sigma^{\perp}|} \sum_{(g,\gamma) \in \Sigma^{\perp}} \gamma \ot g.
\]
Then $e$ is a central
idempotent of $D(H)$, $ze = \varepsilon (z) e$, for all $z \in K$,
and $K^{+} = (1 - e) K$. Thus, $D(H) K^{+} = D(H) K (1 - e) = (1 -
e) D(H)$. The image of $\pi^{*}$ is
$$
\textnormal{Im} (\pi^{*}) = \{ f \in \big( D(H) \big)^{*} \mid f
(z) = f (ez), \textnormal{ for all } z \in D(H) \}.
$$

It is easy to check that any $f \in \{ g_{i} \ot
\chi_{i}^{-1}, x_{i} \ot \varepsilon, g_{i} \ot r' (-, x_{i})\}$ satisfies
$f ( (\gamma \ot g)z) = f (z)$, for all $(g, \gamma) \in
\Sigma^{\perp}$ and $z \in D(H)$. For example, if $(g, \gamma) \in
\Sigma^{\perp}$, then
\begin{align*}
(x_{i} \ot \varepsilon) \big( (\gamma \ot g) z \big) & = (g_{i}
\ot \chi_{i}^{-1}) (\gamma \ot g) \, (x_{i} \ot \varepsilon) (z) +
(x_{i} \ot \varepsilon) (\gamma \ot g) \, (1 \ot \varepsilon)
(z)\\
& = \gamma (g_{i}) \chi_{i}^{-1} (g) \, (x_{i} \ot \varepsilon) (z) \\
& = (x_{i} \ot \varepsilon) (z).
\end{align*}
It follows that the generators of $A$ are contained in the image
of $\pi^{*}$, so $A \subseteq \textnormal{Im} (\pi^{*})$. Using Remark~\ref{dimA}
we compute
$$
\dim \textnormal{Im} (\pi^{*}) = \frac{\dim D(H)}{ \dim K} =
\frac{|\Gamma|^{2} 2^{2n}}{|\Sigma^{\perp}|} = |\Sigma| \, 2^{2n} =
\dim A
$$
and so  $A = \textnormal{Im} (\pi^{*})$.
\end{proof}

Recall  the notion of the Drinfeld double of $V$
from Section~\ref{double of quantum linear space}.
We have $D(V) = W \oplus
W^{*} \in \, ^{\Sigma}_{\Sigma} \mathcal{YD}$, where $W$ is the
quantum linear space associated to the datum
\[
\big( (g_{1},
\chi_{1}^{-1}), \dots, (g_{n}, \chi_{n}^{-1}), \varphi_{1}, \dots,
\varphi_{n} \big),\quad  \mbox{with }  \varphi_{i} \in\widehat{\Sigma},\,
\varphi_{i} (g, \chi) = \chi_{i} (g),\, i=1,\dots,n.
\]
Define a bicharacter $r_\Sigma: \Sigma\times \Sigma \to k^\times$ by
\[
r_\Sigma \left((g,\chi),\,(g',\,\chi')\right) = \chi'(g)   .
\]
The diagonal of this bicharacter is a quadratic form $q_{\Sigma}  : \Sigma \to k^{\times}$,
\[
q_{\Sigma} (g, \chi) = \chi (g),\qquad (g, \gamma) \in \Sigma. 
\]
Then $D(V)\in \Z_{sym}(\C(\Sigma,\, r_\Sigma))_-$.

\begin{theorem}
\label{Zad}
Let $(\Gamma, q, V, r)$ be a metric quadruple such that $V \in \C(\Gamma,\, q)$ is self-dual. 
There is an equivalence of braided tensor categories:
$$
\mathcal{Z} \big( \mathcal{C} (\Gamma,\, q,\, V,\, r)
\big)_{\textnormal{ad}} \cong \mathcal{C} (\Sigma,\, q_{\Sigma},\, D(V),\,
r_{D(V)}),
$$
where 
$r_{D(V)}$ is the canonical symplectic form on $D(V)$ defined in \eqref{rDV}.
\end{theorem}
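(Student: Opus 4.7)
The plan is to pass to the dual Hopf-algebra picture via~\eqref{ZCorepH}, which identifies $\Z(\C)$ with $\Corep(D(H)^{*\textnormal{op}})$ for $H=\mathfrak{B}(V)\# k[\Gamma]$ endowed with the $r$-form $r'$ produced by Theorem~\ref{Thm 1} from the pair $(r_0,\, r_1)$; the self-duality hypothesis on $V$ is used here to choose $r_1:V\ot V\to k$ non-degenerate. Since the universal grading of $\Z(\C)$ is dually controlled by the maximal central Hopf subalgebra $K=k[G(D(H)^*)]$ of $D(H)$, the adjoint subcategory $\Z(\C)_{\textnormal{ad}}$ becomes the corepresentation category of the Hopf subalgebra $A\subset D(H)^{*\textnormal{op}}$ dual to the quotient $D(H)/D(H)K^+$, and this identification is precisely the content of Proposition~\ref{quotient_of_D(H)}.

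Next I would show that $A\cong \mathfrak{B}(D(V))\# k[\Sigma]$ as Hopf algebras. By construction, the group-likes of $A$ exhaust $\Sigma$, and the remaining generators $x_i\ot\eps$ and $g_i\ot r'(-,\, x_i)$ are skew-primitive. A direct computation of the adjoint $\Sigma$-action and the $\Sigma$-coaction shows that the first family spans the Yetter-Drinfeld submodule $W\in {}^\Sigma_\Sigma\mathcal{YD}$ from Section~\ref{double of quantum linear space}, while the second family spans $W^*$, the identification with the dual being supplied by the non-degenerate pairing $r_1$. The dimension count of Remark~\ref{dimA} matches $|\Sigma|\cdot 2^{2n}=\dim_k\bigl(\mathfrak{B}(D(V))\# k[\Sigma]\bigr)$, and checking the quantum-linear-space commutation and nilpotency relations for $D(V)$ inside $A$ closes the identification.

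Finally, by Theorem~\ref{Thm 1} it suffices to compute the restrictions of the $r$-form on $A$, inherited from the canonical R-matrix of $D(H)$, to $\Sigma\times\Sigma$ and to $D(V)\ot D(V)$. Pairing two group-likes reduces to the canonical duality between $H$ and $H^*$ and yields the bicharacter $r_\Sigma((g,\chi),(g',\chi'))=\chi'(g)$, whose diagonal is $q_\Sigma$. On $D(V)\ot D(V)$ the computation splits into four blocks: the diagonal blocks $W\ot W$ and $W^*\ot W^*$ vanish for the same reason that pairings of skew-primitives with skew-primitives forced~\eqref{gPgQ} in Section~\ref{Class of CQT}, while the two cross blocks produce $\ev_W$ and $\ev_W\circ c_{W,W^*}$ respectively, reproducing the symplectic form~\eqref{rDV}.

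The main obstacle is this cross-block computation: one must unwind the standard R-matrix of $D(H)$ on pairs such as $(x_i\ot\eps,\, g_j\ot r'(-,\, x_j))$ and $(g_i\ot r'(-,\, x_i),\, x_j\ot\eps)$, and verify that the auxiliary non-degenerate form $r_1$ used to manufacture the second type of generator is absorbed precisely into the canonical evaluation pairing on $W^*\ot W$. Since different non-degenerate choices of $r_1$ produce gauge-equivalent $r$-forms on $A$, the resulting metric quadruple $(\Sigma,\, q_\Sigma,\, D(V),\, r_{D(V)})$ is well-defined in the groupoid $\mathcal{Q}$, and a final appeal to Theorem~\ref{Thm 1} (or equivalently Theorem~\ref{Thm 2}) yields the claimed braided tensor equivalence.
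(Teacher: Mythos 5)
Your overall strategy coincides with the paper's: identify $\Z(\C)_{\textnormal{ad}}$ with $\Corep(A)$ via Proposition~\ref{quotient_of_D(H)}, prove $A\cong \mathfrak{B}(D(V))\#k[\Sigma]$ using the two families of skew-primitives and the dimension count of Remark~\ref{dimA}, and then compute the induced $r$-form. The gap is in the last step. You assert that the restriction of the canonical $r$-form $r_{D(H)^{*}}$ to the skew-primitive generators has two nonzero cross blocks producing $\ev_W$ and $\ev_W\circ c_{W,W^*}$, so that the symplectic form \eqref{rDV} appears directly. The computation does not come out that way: with
\[
r_{D(H)^{*}} (\alpha, \beta) = \sum_{h, P} \alpha ( \varepsilon \ot hx_{P})\, \beta\big((hx_{P})^{*} \ot 1\big),
\]
one finds $r_{D(H)^{*}}(x_i\ot\eps,\; g_j\ot r'(-,x_j))=0$ (the factor $\varepsilon(x_i)$ kills it), while $r_{D(H)^{*}}(g_j\ot r'(-,x_j),\; x_i\ot\eps)=r'(x_i,x_j)$. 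So the bilinear form you obtain on $D(V)$ has block matrix $\left(\begin{smallmatrix} 0 & 0\\ X^{t} & 0\end{smallmatrix}\right)$: it is one-sided, neither alternating nor equal to $r_{D(V)}$, and also your stated reason for the vanishing of the diagonal blocks (an analogue of \eqref{gPgQ}) is not what is actually at work.

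The missing idea is the final normalization. One must twist by an invariant $2$-cocycle on $\mathfrak{B}(D(V))\#k[\Sigma]$ with trivial restriction to $\Sigma$ (Corollary~\ref{trivial restriction} together with Lemma~\ref{lemma 2}, exactly as in part (1) of the proof of Theorem~\ref{Thm 2}); this does not change the braided equivalence class but replaces the form by its alternating part $\frac{1}{2}\left(\begin{smallmatrix} 0 & -X\\ X^{t} & 0\end{smallmatrix}\right)$, which is then congruent to $\left(\begin{smallmatrix} 0 & -I_n\\ I_n & 0\end{smallmatrix}\right)$, i.e.\ to $r_{D(V)}$. Your closing observation that different non-degenerate choices of $r_1$ give gauge-equivalent $r$-forms is true but does not substitute for this step, since the problem is not the choice of $r_1$ but the asymmetry of the canonical pairing on $D(H)^{*}$.
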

\begin{proof}
Let $H = \mathfrak{B} (V) \# k[\Gamma]$ and let $A$ be the Hopf
subalgebra of $D(H)^{*}$ generated by the group-like elements
$g_{i} \ot \chi_{i}^{-1}$ and by the skew-primitive elements
$x_{i} \ot \varepsilon$ and $g_{i} \ot r' (-, x_{i})$, $i = 1,
\dots, n$, where $r'$ is an $r$-form on $H$ whose restriction to
$V \ot V$ is non-degenerate. Using
Proposition~\ref{quotient_of_D(H)}, we have
$$
\mathcal{Z} ( \Corep(H) )_{\textnormal{ad}} = \big( \Rep
(D(H)^{\textnormal{cop}}) \big)_{\textnormal{ad}} = \Rep \big( A^{*
\, \textnormal{cop}} \big) =  \Rep \big( A^{*}\big)^{\textnormal{op}} \simeq \Rep(A^{*}) = \Corep (A).
$$
where the equivalence between $\Rep(A^{*})$ and its opposite
follows from the fact that $\Rep (A^{*})$ is braided.

We claim that $A \cong \mathfrak{B} (D(V)) \# k [\Sigma]$. Indeed,
it is easy to check that for each  $(g, \gamma) \in \Sigma$ we
have
\begin{align*}
(g \ot \gamma) (x_{i} \ot \varepsilon) & = \chi_{i}^{-1} (g)
(x_{i} \ot \varepsilon) (g \ot \gamma),\\
(g \ot \gamma) (g_{i} \ot r (-, x_{i})) & = \chi_{i}^{-1} (g)
(g_{i} \ot r (-, x_{i})) (g \ot \gamma),\\
(g_{i} \ot r (-, x_{i})) (x_{j} \ot \varepsilon) & = \chi_{j}^{-1}
(g_{i}) (x_{j} \ot \varepsilon) (g_{i} \ot r (-, x_{i})),
\end{align*}
for all $i$, $j = 1, \dots, n$. Note that $W$ is self-dual
because $V$ is self-dual. Thus, there exists a basis
$\{y_{i}\}_{i=1}^{2n}$ of $D(V)$ such that $y_{i}$, $y_{n + i} \in
D(V)_{(g_{i}, \chi_{i}^{-1})}^{\varphi_{i}}$. It follows from the
above, that the map $A \to \mathfrak{B} (D(V)) \# k[\Sigma]$,
given by
$$
g \ot \gamma \mapsto (g, \gamma), \quad x_{i} \ot \varepsilon
\mapsto y_{i}, \quad g_{i} \ot r (-, x_{i}) \mapsto y_{n+ i}
$$
for all $(g, \gamma) \in \Sigma$ and $i = 1, \dots, n$, is a Hopf
algebra isomorphism.

The braiding on $\mathcal{Z} \big( \mathcal{C} (\Gamma, q, V, r)
\big)_{\textnormal{ad}}$ is obtained by restriction of the
braiding of $\Z\big( \C(\Gamma, q, V, r) \big)$. It
corresponds to the the braiding on $\Corep(A)$ coming from the
restriction to $A$ of the canonical $r$-form
$r_{D(H)^{*}} : D(H)^{*} \ot D(H)^{*} \to k$.
on $D(H)^{*}$. The latter is  given by
$$
r_{D(H)^{*}} (\alpha, \beta) = \sum_{h \in \Gamma, P \subseteq
\{1, \dots, n\}} \alpha ( \varepsilon \ot hx_{P}) \beta
((hx_{P})^{*} \ot 1), \quad \alpha, \beta \in D(H)^{*}.
$$
We have
\begin{align*}
r_{D(H)^{*}} ( g \ot \gamma,  g' \ot \gamma') & = \sum_{h, P} \varepsilon (g) \gamma (hx_{P}) (hx_{P})^{*} (g') \gamma'(1) = \gamma (g'),\\
r_{D(H)^{*}} ( x_{i} \ot \varepsilon, x_{j} \ot \varepsilon ) & = \sum_{h, P} \varepsilon (x_{i}) \varepsilon (hx_{P}) (hx_{P})^{*} (x_{j} \varepsilon (1)) = 0,\\
r_{D(H)^{*}} \big( x_{i} \ot \varepsilon, g_{j} \ot r (-, x_{j}) \big) & = \sum_{h, P} \varepsilon(x_{i}) \varepsilon (hx_{P}) (hx_{P})^{*} (g_{j}) r(1, x_{j}) = 0, \\
r_{D(H)^{*}} \big( g_{j} \ot r (-, x_{j}), x_{i} \ot \varepsilon \big) & = \sum_{h, P} \varepsilon (g_{j}) r (hx_{P}, x_{j}) (hx_{P})^{*} (x_{i}) \varepsilon (1) = r (x_{i}, x_{j}),\\
r_{D(H)^{*}} \big( g_{i} \ot r (-, x_{i}), g_{j} \ot r (-, x_{j})
\big) & = \sum_{h, P} \varepsilon (g_{i}) r (hx_{P}, x_{i})
(hx_{P})^{*} (g_{j}) r (1, x_{j}) = 0.
\end{align*}

Thus, $\Corep (A, r_{D(H)^{*}}|_{A\ot A}) \simeq \mathcal{C} (\Sigma,
q_{\Sigma}, D(V), s)$, where the quadratic form $q_{\Sigma}  : \Sigma \to
k^{\times}$ is given by  $q_{\Sigma} (g, \gamma) = \gamma (g)$ for all $(g,\,
\gamma) \in \Sigma$  and the matrix of $s : D(V) \ot D(V) \to k$
with respect to the basis $\{y_{i}\}$ is the block matrix
\begin{center}
$\left(\begin{array}{cc} 0 & 0 \\
X^{t} & 0
\end{array}\right)$.
\end{center}
Here $X^{t}$ is the transpose of the matrix $X = \big( s (x_{i},
x_{j}) \big)_{i,j}$. Changing $s$ by  a cocycle deformation
$s^{\sigma}$ will not change the braided equivalence class of
$\C(\Sigma, q_{\Sigma}, D(V), s)$. As explained in Section~\ref{proof section},
we can choose invariant $\sigma$ such that $s^{\sigma}$ is alternating.
The matrix of $s^{\sigma}$ with respect to the basis $\{y_{i}\}$ 
is then
\begin{center}
$\frac{1}{2} \left(\begin{array}{cc} 0 & -X \\
X^{t} & 0
\end{array}\right).$
\end{center}
This matrix is easily seen to be congruent to $\left(\begin{array}{cc} 0 & -I_{n} \\
I_{n} & 0 \end{array}\right)$. 
So after a change of basis the matrices of $s^{\sigma}$ and  $r_{D(V)}$ coincide. Therefore, $\Corep (A,
r_{D(H)^{*}}|_{A\ot A}) \simeq \mathcal{C} (\Sigma, q_{\Sigma}, D(V),
r_{D(V)})$.
\end{proof}

Let $G$ be a finite group.
Recall that a tensor category $\C$ is called a {\em $G$-extension} of a tensor category $\A$
if there is a faithful grading $\C =\bigoplus_{g\in G}\, \C_g$ such that that $\C_e\cong \A$.

\begin{corollary}
$\Z(\C(\Gamma,\, q,\, V,\, r))$ is a $\widehat{\Sigma^{\perp}}$-extension of
$\C(\Sigma,\, q_{\Sigma},\, D(V),\, r_{D(V)})$.
\end{corollary}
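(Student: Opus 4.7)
The plan is to deduce this corollary directly from Theorem~\ref{Zad} together with the discussion of universal gradings in Section~\ref{adjoint subcategory}. First I would recall the identification $\Z(\C(\Gamma,\,q,\,V,\,r)) \cong \Rep(D(H)^{\text{cop}})$ where $H = \mathfrak{B}(V)\# k[\Gamma]$, as in \eqref{ZCorepH}. Since $D(H)^{\text{cop}}$ is a quasitriangular Hopf algebra, its maximal central Hopf subalgebra is the group algebra of $G(D(H)) \cap Z(D(H))$, and this subgroup provides the universal grading of $\Rep(D(H)^{\text{cop}})$ by its character group.

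Next I would invoke the identification \eqref{group-likes of the dual double}, namely $G(D(H)^*) \cong G(D(H)) \cap Z(D(H)) \cong \Sigma^{\perp}$, where $\Sigma \subset \Gamma \times \widehat{\Gamma}$ is the isotropic subgroup from \eqref{Sigma}. This shows that the universal grading group of $\Z(\C)$ is $\widehat{\Sigma^{\perp}}$, and by definition the trivial component of this grading is the adjoint subcategory $\Z(\C)_{\text{ad}}$. Because the universal grading is always faithful, every homogeneous component is non-zero, so $\Z(\C)$ is genuinely a $\widehat{\Sigma^{\perp}}$-extension of $\Z(\C)_{\text{ad}}$.

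The final step is to substitute the description of the adjoint subcategory provided by Theorem~\ref{Zad}, namely $\Z(\C)_{\text{ad}} \cong \C(\Sigma,\, q_{\Sigma},\, D(V),\, r_{D(V)})$, which holds under the hypothesis that $V$ is self-dual. Combining these pieces yields the stated extension. There is essentially no obstacle here beyond unpacking the definitions, since all the substantive work has already been performed: the analysis of central group-likes of $D(H)^*$ in Section~\ref{adjoint subcategory} supplies the grading group, and Theorem~\ref{Zad} supplies the trivial component. The only minor point worth verifying explicitly is that the grading in question (coming from the central Hopf subalgebra $k[\Sigma^{\perp}]$ of $D(H)^*$) really does coincide with the universal grading, but this is immediate from the fact that $\Sigma^{\perp}$ is the full group of central group-likes of $D(H)$, i.e.\ the grading cannot be refined further.
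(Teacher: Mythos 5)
Your proposal is correct and matches the paper's intended argument: the corollary is stated without proof precisely because it follows by combining the identification \eqref{ZCorepH}, the fact that the universal grading group of $\Rep(D(H)^{\textnormal{cop}})$ is the character group of $G(D(H))\cap Z(D(H))\cong\Sigma^{\perp}$ from \eqref{group-likes of the dual double}, and Theorem~\ref{Zad} for the trivial component. Your remark that the grading by $\widehat{\Sigma^{\perp}}$ is the universal (hence faithful) one, since $\Sigma^{\perp}$ is the full group of central group-likes, is exactly the point the paper relies on.
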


\bibliographystyle{ams-alpha}

\end{document}